\documentclass{amsart}
\usepackage{amssymb}
\usepackage{amscd}
\usepackage{amsfonts}
\usepackage{latexsym}
\usepackage[all]{xy}
\usepackage{verbatim}
\usepackage{hyperref}
\usepackage{mathenv}
\usepackage{mathbbol}
\usepackage{mathrsfs}
\usepackage{tikz}

\newtheorem{theorem}{Theorem}[section]
\newtheorem{lemma}[theorem]{Lemma}
\newtheorem{proposition}[theorem]{Proposition}
\newtheorem{corollary}[theorem]{Corollary}
\theoremstyle{definition}
\newtheorem{definition}[theorem]{Definition}
\newtheorem{example}[theorem]{Example}
\newtheorem{question}[theorem]{Question}
\newtheorem{conjecture}[theorem]{Conjecture}
\newtheorem{remark}[theorem]{Remark}

\newcommand{\R}{\mathbb{R}}
\newcommand{\G}{\mathbb{G}}
\newcommand{\C}{\mathcal{C}}
\newcommand{\D}{\mathcal{D}}
\newcommand{\W}{\mathcal{W}}
\newcommand{\Sl}{\mathcal{S}}
\newcommand{\BC}{\mathbb{C}}
\newcommand{\Z}{\mathbb{Z}}
\newcommand{\K}{\mathbb{K}}

\newcommand{\g}{\mathfrak{g}}

\newcommand{\cZ}{\mathcal{Z}}
\newcommand{\A}{\mathcal{A}}
\newcommand{\B}{\mathcal{B}}
\newcommand{\T}{\mathcal{T}}
\newcommand{\cI}{\mathcal{I}}
\newcommand{\mO}{\mathcal{O}}

\newcommand{\FP}{\text{FPdim}}
\newcommand{\Rep}{\text{Rep}}
\newcommand{\Irr}{\text{Irr}}
\newcommand{\Id}{\text{Id}}
\newcommand{\Hom}{\mathrm{Hom}}
\newcommand{\Ext}{\text{Ext}}
\newcommand{\End}{\text{End}}
\newcommand{\be}{{\bf 1}}
\numberwithin{equation}{section} 
\usepackage{color}

\begin{document}
\title{A non-semisimple Witt class}

\author{Victor Ostrik}
\address{V.O.: Department of Mathematics,
University of Oregon, Eugene, OR 97403, USA}
\email{vostrik@math.uoregon.edu}

\author{Alexandra Utiralova}
\address{V.O.: Department of Mathematics,
University of Oregon, Eugene, OR 97403, USA}
\email{auti@math.uoregon.edu}

\begin{abstract} We describe several infinite families of braided finite tensor categories.
A simplest example gives a non-degenerate braided tensor category which is not Witt equivalent
to a semisimple category.
\end{abstract}

\date{\today} 
\maketitle  

\section{Introduction}
\subsection{}
This paper is a contribution to the theory of braided finite tensor categories. In the case
of semisimple categories over $\BC$ many known examples arise from Wess-Zumino-Witten
models in conformal field theory, see e.g. \cite{BK}. One of the easiest algebraic constructions of these categories was given
by H.~H.~Andersen \cite{An} (see also \cite{Saw}). In modern terms this construction can be summarized
as follows: for a simple Lie algebra $\g$ and a root of unity $q$ such that $l$ is the order of $q^2$ one considers
the category $\T(\g,q)$ of {\em tilting modules} over the quantum group at a root of unity $q$ associated 
with Lie algebra $\g$. Then one defines a category $\C(\g, l, q)$ as the {\em semisimplification}
of   $\T(\g,q)$ (see e.g. \cite{EOsemi}). The category $\C(\g, l, q)$ is a semisimple braided tensor
category; moreover this category is finite (i.e. it has only finitely many classes of simple objects) if
$l$ is sufficiently large. The precise bounds for $l$ are given in \cite[Figure 2]{Scho}; in this paper 
we will always assume that $l$ is sufficiently large in this sense. Note that there is a combinatorial 
difference between the case when $l$ is {\em divisible} (see \ref{divisible}) and when it is not. 

The procedure of semisimplification above can be described as taking the quotient by a suitable
tensor ideal (namely, by the ideal of negligible morphisms). However, the category $\T(\g,q)$ has 
many other tensor ideals. In \cite{CEO} for any {\em distinguished} nilpotent element $e\in \g$ (or, in the case
when $l$ is divisible, $e\in \g^L$ where $\g^L$ is the Langlands dual Lie algebra of $\g$),
a tensor ideal $\cI_e$ was constructed such that the quotient category $\T(\g,q)/\cI_e$ admits a {\em monoidal
abelian envelope} $\C(\g,e,l,q)$; moreover, the category $\C(\g,e,l,q)$ is a finite tensor category
in the sense of \cite{EOfinite}. For example when $e$ is a regular nilpotent element (which is always
distinguished), the ideal $\cI_e$ is the ideal of negligible morphisms and $\C(\g,e,l,q)=\C(\g,l,q)$.
Unfortunately, there is not much we can say about the category $\C(\g,e,l,q)$ for other nilpotent
elements $e$ (however, see Section \ref{distconj} for a conjectural formula for the Frobenius-Perron dimension
of $\C(\g,e,l,q)$ and conjectural description of its cohomology). The goal of this paper is to give 
some explicit information about the category $\C(\g,e,l,q)$ in the case when $e=e_{sr}$ is a subregular
nilpotent element, see \cite[4.2]{CMg}. Recall that $e_{sr}$ is distinguished if and only if $\g$ is not of
types $A_n, n\ge 1$ or $B_n, n\ge 2$, see \cite{CMg}.

To state our main results we need a bit more notation. Let $V$ be a two dimensional space and let
$\Gamma \subset SL(V)$ be a finite subgroup of even order. By the classical McKay correspondence,
the finite subgroups of $SL(V)$ up to conjugacy are labelled by simply laced affine Dynkin diagrams (the only
subgroups of odd order correspond to affine Dynkin diagrams of type $\tilde A_{2n}$). We associate
a subgroup $\Gamma$ to $\g$ and $l$ as above as follows:
\renewcommand{\arraystretch}{1.4}
$$
\begin{array}{|c|c|c|c|c|c|c|}
\hline
\g&B_n&C_n&D_n&E_n&F_4&G_2\\
&n\ge 3&n\ge 3&n\ge 4&n=6,7,8&&\\
\hline
l&\mbox{divisible}&\mbox{not divisible}&\mbox{any}&\mbox{any}&\mbox{any}&\mbox{any}\\
\hline
\Gamma &\tilde D_{2n} &\tilde D_{2n}&\tilde D_n&\tilde E_n&\tilde E_7&\tilde E_7\\
\hline
\end{array}
$$

Let $\wedge(V)$ be the exterior algebra of $V$. We can consider $\wedge(V)$ as an algebra in the category
$\mbox{Rep}(\Gamma)$ of finite dimensional representations of $\Gamma$. We consider the abelian
category of left $\wedge(V)-$modules in the category $\mbox{Rep}(\Gamma)$ and we call it
{\em block of type} $\Gamma$. Thus, a block of type $\Gamma$ is the category equivalent to that of finite dimensional
representations of the cross product of $\wedge(V)$ with the group algebra of $\Gamma$.
Finally let $S^\bullet(V)$ be the symmetric algebra of $V$ which is graded by even integers (so $V \subset
S^\bullet(V)$ is in degree 2). The group $\Gamma$ acts on $S^\bullet(V)$ preserving the grading.
Our first main result describes the structure of abelian category $\C(\g,e_{sr},l,q)$.

\begin{theorem}\label{main blocks}
The category $\C(\g,e_{sr},l,q)$ decomposes into blocks which are either trivial
(that is equivalent to the category of vector spaces) or of type $\Gamma$. In particular,
the category $\C(\g,e_{sr},l,q)$ is of tame representation type. Also the cohomology of
$\C(\g,e_{sr},l,q)$ (that is the Ext algebra of the unit object $\be$) is isomorphic to the algebra
of invariants $S^\bullet(V)^\Gamma$.
\end{theorem}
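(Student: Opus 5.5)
Our approach is to realize $\C(\g,e_{sr},l,q)$ concretely through the construction of \cite{CEO}. There the abelian envelope of $\T(\g,q)/\cI_e$ is obtained as a category of modules (equivalently, as the image of a fiber functor) attached to a reductive subgroup related to the centralizer of $e$. For $e=e_{sr}$ the relevant datum is a rank-one situation: the $\mathfrak{sl}_2$-triple through $e_{sr}$ has reductive centralizer $C$ which is finite, and by the Brieskorn--Slodowy theory it matches the finite group $\Gamma\subset SL(V)$ of the table. In the non-simply-laced cases one uses the folding and $\g^L$ when $l$ is divisible, which explains the types $\tilde D_{2n}$ and $\tilde E_7$. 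So the first step is to identify, using \cite{CEO} and Slodowy's description of the subregular slice, the local data near $e_{sr}$: the group $\Gamma$ together with its natural $2$-dimensional representation $V$, coming from the transversal slice $S_{e_{sr}}\cap\mathcal N$, which is the Kleinian singularity $V/\Gamma$.

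Next we describe the abelian category. The simple objects of $\C(\g,e_{sr},l,q)$ are images of indecomposable tiltings not in $\cI_e$. We would split them, via linkage, into those whose weights lie in the interior of an alcove region where the ideal acts as the negligible ideal, and the rest. For the first kind the projective cover is simple, giving trivial blocks $\mathrm{Vec}$. For the remaining simples we compute the projective covers as images of suitable tilting modules $T(\lambda)$, where $\lambda$ is reflected across one wall. We then show that the endomorphism algebra of a projective generator of such a block is $\wedge(V)\rtimes\BC\Gamma$. The degree-one part is $\Hom$ between neighboring projectives, governed by the McKay graph (tensoring with $V$). The relations are $V\otimes V\to\wedge^2V$, i.e.\ the exterior relations. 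To match the quiver we compare the tilting linkage combinatorics with the affine Dynkin diagram $\tilde\Gamma$, a case-by-case check over the table. Tameness then follows because $\wedge(V)\rtimes\Gamma$ is Koszul dual to the preprojective/skew group algebra $S(V)\rtimes\Gamma$ of tame type, and in fact is a known special biserial-like tame algebra.

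For cohomology, the block containing $\be$ is of type $\Gamma$ with $\be$ corresponding to the trivial representation, viewed as a $\wedge(V)$-module via the augmentation. Then
\[
\Ext^\bullet_{\wedge(V)\rtimes\Gamma}(\be,\be)=\Ext^\bullet_{\wedge(V)}(\BC,\BC)^\Gamma=S^\bullet(V)^\Gamma ,
\]
by Koszul duality, with $V$ placed in degree $2$ once the grading convention of the statement is used. Here the Ext is doubled because $V$ sits in odd degree for $\wedge$. One must check that the $\Gamma$-action on $V$ is the defining one and not a twist by a character. This is guaranteed since $\Gamma\subset SL(V)$ acts trivially on $\wedge^2V$, which is consistent with $\wedge^2V$ being the socle, i.e.\ with self-injectivity of the block (Frobenius property of finite tensor categories).

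The main obstacle is the second step: proving that the basic algebra of a nontrivial block is exactly $\wedge(V)\rtimes\BC\Gamma$, and not merely an algebra with the same quiver. The first attempt would be to use the braided/tensor structure. The block of $\be$ contains $\be$, and tensoring with the generating object should act on it like $V\otimes-$. Combined with a dimension count of $\Hom$'s between tiltings, via characters and Weyl filtrations, this should force the relations. Other nontrivial blocks would then be handled by translation functors, which are equivalences between blocks in the same facet type.
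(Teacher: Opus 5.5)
Your proposal has a genuine gap. It sits exactly where you locate the ``main obstacle'', and your first step is not available either.

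\textbf{The first step.} What the paper extracts from the construction of the envelope (\cite{CEO}, via \cite{BEO}) is only finiteness and the list of indecomposable projectives $T(\lambda)$, $\lambda\in P_{A_{sr}}$. No realization through the centralizer of $e_{sr}$ or the Slodowy slice is available; the paper states the link with the slice only as Conjecture~\ref{Slod}. Your identification is also wrong as stated: the finite reductive centralizer $Q$ is not $\Gamma$. For $G_2$ one has $Q=S_3$, the slice is $\BC^2/\Gamma_1$ with $\Gamma_1$ the quaternion group, and $\Gamma_1$ is normal in the binary octahedral group $\Gamma$ with $\Gamma/\Gamma_1\simeq S_3$.

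\textbf{Your proposed fix for the obstacle.} $\Gamma$ and $V$ never live inside $\C(\g,e_{sr},l,q)$; the equivalence with $\Rep(\Gamma\ltimes V)$ is one of abelian categories only. So a generator acting on the principal block ``like $V\otimes-$'' cannot exist. The principal block is not closed under $\otimes$. Moreover, such an object would force the Frobenius--Perron dimensions of the simples in the block to equal the dimensions of the irreducible $\Gamma$-modules. But for $G_2$, $l=7$, every simple of the principal block other than $L_0,L_4,L_7$ has non-integral Frobenius--Perron dimension.

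\textbf{The two missing ingredients.}

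First, the exact Cartan matrix.
\begin{itemize}
\item Lusztig's description of $A_{sr}$ (elements with a unique reduced expression beginning with $s_0$) produces the graph $X_{sr}$, which is the affine diagram of the table.
\item Soergel's algorithm gives $\mathrm{ch}\,T(w\cdot 0)=\chi_{w\cdot 0}+\chi_{ws\cdot 0}$ for $w\in A_{sr}$, hence $\Hom$-dimensions $2,1,0$ according to $X_{sr}$. In types $B_n,C_n,G_2$ this fails for one element $w_b$, which is handled with translation functors.
\item Passing to the quotient by $\cI_{e_{sr}}$ only gives upper bounds. Equality comes from unimodularity (the projective cover $T(s_0\cdot 0)$ of $\be$ has socle $\be$). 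This makes $\Hom$-dimensions symmetric and excludes diagonal entries equal to $1$ (Lemma~\ref{projubf}).
\item Finally, the Cartan matrix of the principal block is degenerate (argument of \cite[Theorem 6.6.1]{EGNO}), while $2\Id+A$ is nondegenerate for every proper subgraph of an affine diagram. So all edges of $X_{sr}$ survive.
\end{itemize}

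Second, a rigidity result (Proposition~\ref{tree}). A block of a finite tensor category with Cartan matrix $2\Id+A(X)$, $X$ a tree, has basic algebra given by loops $\delta_i$ and arrows $i\rightleftarrows j$, with $(i\to j)(j\to i)=\lambda_{ij}\delta_i$. The scalars $\lambda_{ij}$ are nonzero because indecomposable projectives have simple socle, and they can be normalized to $1$ by induction on leaves. Apply this both to the block and to $\Rep(\Gamma\ltimes V)$, whose Cartan matrix is $2\Id+A_\Gamma$. This gives the equivalence without ever writing down relations, which is exactly what your approach cannot do.

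\textbf{Smaller points.}
\begin{itemize}
\item The trivial blocks come from the weights of $P_{A_{sr}}$ lying on walls, not from interior weights.
\item Tameness should be deduced from tameness of $\wedge(V)$ together with $\Gamma$-equivariantization. Koszul duality with $S(V)\rtimes\Gamma$ does not by itself give tameness, and the algebra is not special biserial at trivalent vertices.
\item Your cohomology computation is fine once the equivalence is established.
\end{itemize}
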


\begin{remark} The proof of Theorem \ref{main blocks} shows that the number of blocks of type $\Gamma$
is the same as the number of weights inside of the fundamental alcove (but not on its boundary). Thus, this
number is the same as the number of simple objects in the category $\C(\g,l,q)$.
\end{remark}

\subsection{} Next, we study one specific example, the category $\C(G_2,G_2(a_1),7,q)$ (thus, we
consider Lie algebra $\g$ of type $G_2$; also $G_2(a_1)$ is the standard notation for the subregular
nilpotent orbit in type $G_2$). This is the simplest example of the categories considered above
(at least for undivisible $l$). Recall the standard notation for the quantum numbers:
$$[k]_l=\frac{\sin(k\pi/l)}{\sin(\pi/l)}.$$
In particular, $[2]_7=[5]_7=2\cos(\pi/7)\approx 1.801938$ and $[3]_7=[4]_7=\frac{\sin(3\pi/7)}{\sin(\pi/7)}\approx
2.246980$.

\begin{theorem}\label{main 7}
(1) The category $\C(G_2,G_2(a_1),7,q)$ has 15 trivial blocks and one block of type $\tilde E_7$.
In particular it has 23 simple objects.

(2) We have $\FP(\C(G_2,G_2(a_1),7,q))=294(7+15[3]_7+12[5]_7)\approx 18324.416384$.

(3) The category $\C(G_2,G_2(a_1),7,q)$ has stable Chevalley property: tensor products of simple 
objects are direct sums of simples and projectives.

(4) The M\"uger center of the category $\C(G_2,G_2(a_1),7,q)$ is equivalent to $\Rep(S_3)$
(where $S_3$ is the symmetric group on three letters).
\end{theorem}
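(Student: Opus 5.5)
The plan is to specialize Theorem \ref{main blocks} and the construction of \cite{CEO} to $\g=G_2$, $l=7$, and then reduce everything to explicit computations with tilting characters for quantum $G_2$ at a $7$th root of unity. Since $3\nmid 7$, $l$ is not divisible, so $\Gamma$ is of type $\tilde E_7$ (the binary octahedral group, with $8$ irreducible representations). For part (1) I first count the interior weights of the fundamental alcove. For $G_2$ with $l=7=h+1$ the only such weight is $0$, so $\C(G_2,7,q)$ is pointed with one simple object. By the Remark after Theorem \ref{main blocks} there is therefore exactly one block of type $\tilde E_7$, contributing $8$ simples.

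The trivial blocks are generated by indecomposable tilting modules $T(\lambda)$ that are not in $\cI_{e_{sr}}$ and whose image in the envelope is simultaneously simple and projective. Following the proof of Theorem \ref{main blocks}, I would identify these with the highest weights $\lambda$ lying in the region of the dominant chamber attached to the subregular two-sided cell. Concretely, these are the alcoves and walls of the affine Weyl group of $G_2$ (with $l=7$) lying between the fundamental alcove and the region whose tilting modules are in $\cI_e$. I would enumerate them by hand on the $G_2$ alcove picture and check that exactly $15$ weights give trivial blocks. This gives $15+8=23$ simples.

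For part (2) I use $\FP(\C)=\sum_i \FP(L_i)\FP(P_i)$. In a trivial block $L=P$ is the image of a tilting module $T(\lambda)$. Its FP dimension is the quantum dimension at $q$ computed via the Weyl character formula applied to the tilting character $\sum_\mu (T(\lambda):\Delta(\mu))\,\dim_q\Delta(\mu)$. These are known from the linkage principle and Soergel/Andersen combinatorics in these small alcoves, and I expect values in $\Z[[3]_7,[5]_7]$. In the $\tilde E_7$ block the projective cover is $P_i=\wedge(V)\otimes L_i$. Hence
$$\FP(P_i)=2\FP(L_i)+\sum_{j\sim i}\FP(L_j),$$
with the sum over McKay neighbours, so the block contributes the quadratic form $\sum_i d_i(2d_i+\sum_{j\sim i}d_j)$ in the FP dimensions $d_i$ of its simples. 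Those $d_i$ must come from identifying $L_i$ with images of specific tilting modules (or their tops). I would then sum all $23$ contributions and simplify using $[3]_7^2=1+[3]_7+[5]_7$-type relations to get $294(7+15[3]_7+12[5]_7)$.

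For part (3) I would compute the tensor products of the $23$ simples using the tilting fusion rules for $T(\lambda)\otimes T(\mu)$. The goal is to show that every non-projective summand is simple; equivalently, the part of $L\otimes L'$ lying in the $\tilde E_7$ block consists only of simple tops plus projective $\wedge(V)\otimes L_i$. Since the trivial-block simples are projective, only products of two $\tilde E_7$-block simples need care. I would reduce these via the McKay structure, where tensoring with the tilting module generating the block should act like $V\otimes-$ on $\Rep(\Gamma)$.

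For part (4) the plan is to compute the ribbon twists $\theta_\lambda=q^{(\lambda,\lambda+2\rho)}$ on the simples and determine which simples are transparent via the double-braiding criterion on characters. The expected answer is that the Müger center is the subcategory generated by three $\tilde E_7$-block simples: the two one-dimensional representations of $\Gamma$ on which $-1$ acts trivially, and the two-dimensional representation factoring through $S_3$. These form $\Rep(\Gamma/Q_8)=\Rep(S_3)$ with symmetric braiding, and one checks they are transparent. I would finish by showing that no other simple centralizes everything, using twists and monodromy against the generating tilting module.

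I expect the main obstacle to be part (1)'s precise count together with the FP dimensions in part (2). Both require determining exactly which tilting modules survive in the envelope and which simple of $\Lambda(V)\rtimes\Gamma$ each one hits, which needs the explicit tilting characters for $G_2$ at $l=7$.
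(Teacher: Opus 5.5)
Your outline of (1) agrees with the paper: enumerate $P_{A_{sr}}$. The $8$ weights $w\cdot 0$, $w\in A_{sr}$, give the $\tilde E_7$ block, and the $15$ wall weights give simple projective objects.

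The genuine gap is in (2): the Frobenius--Perron dimension of $T(\lambda)$ in $\C$ is not its quantum dimension. No $\lambda\in P_{A_{sr}}$ lies in the interior of the fundamental alcove; for example $\langle\omega_1+\rho,\omega_1\rangle=7=l$, so $\omega_1$ sits on its wall. Hence every such $T(\lambda)$ is negligible, and $\sum_\mu (T(\lambda):\Delta(\mu))\dim_q\Delta(\mu)=0$. The paper itself uses $\dim P_0=0$ to get $\dim L_1=-2$. So your recipe returns $0$ for all $23$ projectives, and $\sum_i\FP(L_i)\FP(P_i)$ cannot come out as $294(7+15[3]_7+12[5]_7)$. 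The missing idea is to compute $\FP$ as the positive character of the Grothendieck ring:
\begin{enumerate}
\item From Soergel's algorithm, obtain the $23\times 23$ matrices $M(\omega_1),M(\omega_2)$ of tensoring with $T(\omega_1),T(\omega_2)$ modulo $\cI_{e_{sr}}$.
\item Take their Perron--Frobenius eigenvalues, $1+2[3]_7$ and $[2]_7+3[3]_7$.
\item Write each $[T(\mu)]$ as a polynomial $f_\mu$ in $[T(\omega_1)],[T(\omega_2)]$ by solving the triangular system coming from $T(\omega_1)^{\otimes a}\otimes T(\omega_2)^{\otimes b}$, and evaluate.
\end{enumerate}
The $d_i=\FP(L_i)$ in the $\tilde E_7$ block then follow from the $\FP(P_i)$ via \eqref{eq:proj_and_simple}, walking along the tree from $d_0=\FP(\be)=1$. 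The Cartan matrix $2\Id+A(\tilde E_7)$ is degenerate, so it is this anchoring at $\be$ that pins the $d_i$ down. These values are also the input to the exclusion step in (4).

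For (3) and (4) you rely on a heuristic that is not available. The equivalence of the principal block with $\Rep(\Gamma\ltimes V)$ is only an equivalence of abelian categories, and the block is not closed under $\otimes$. Nothing makes some tilting module act like $V\otimes-$. Likewise, nothing makes $\{L_0,L_4,L_7\}$ monoidally equivalent to $\Rep(\Gamma/Q_8)$ just because of their positions on the McKay graph; your guess of which vertices is right, but it needs proof.

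In (3) the paper proceeds as follows. It computes $L_i\otimes P_j$ from the tilting fusion matrices and \eqref{eq:proj_and_simple}, and uses that $M\otimes P\cong P'$ forces $M\otimes L\cong L'$ on heads. It then matches Grothendieck classes and checks that the resulting composition factors have no extensions among them. The one case where classes are insufficient is $L_1\otimes L_1$. There one needs that $\be$ splits off, because $L_1$ is self-dual (Corollary~\ref{cor:self-dual}) with $\dim L_1=-2\neq 0$, followed by an analysis of $\iota_0\otimes\mathrm{id}_{L_1}$.

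In (4) the fusion rules $L_4\otimes L_4=L_0\oplus L_4\oplus L_7$ and $L_4\otimes L_7=L_4$ come from that same computation. $L_4$ is transparent because $\theta_{L_4}=1$ (it lies in the block of $\be$), and $L_4\otimes X$ is a sum of simples in the same dot-orbit as $X$, hence with the same twist. Therefore $c_{X,L_4}c_{L_4,X}=\theta_{L_4\otimes X}(\theta_{L_4}\otimes\theta_X)^{-1}=\mathrm{id}$. All other simples are excluded because the M\"uger center is a finite symmetric tensor category, so its objects have integral FP dimension, while only $L_0,L_4,L_7$ have integral $\FP$. This again requires the correct dimensions from (2).
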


In view of Theorem \ref{main 7} (4), it makes sense to consider the de-equivariantization $\bar \C(G_2,G_2(a_1),7,q)$ 
of $\C(G_2,G_2(a_1),7,q)$ with respect to its M\"uger center (so the category $\bar \C(G_2,G_2(a_1),7,q)$ 
is the Brugui\`eres' modularisation of $\C(G_2,G_2(a_1),7,q)$, see \cite{Brug}). The category
$\bar \C(G_2,G_2(a_1),7,q)$ is a non-semisimple modular tensor category in the sense of Shimizu,
see \cite{Shim}.

\begin{theorem}\label{main 7mod}
(1) The category $\bar \C(G_2,G_2(a_1),7,q)$ has 12 trivial blocks and one block of type $\tilde D_4$.
In particular, it has 17 simple objects.

(2) We have $\FP(\bar \C(G_2,G_2(a_1),7,q))=49(7+15[3]_7+12[5]_7)\approx 3054.068811$.

(3) The category $\bar \C(G_2,G_2(a_1),7,q)$ has stable Chevalley property.

(4) The category $\bar \C(G_2,G_2(a_1),7,q)$ is completely anisotropic: it has no non-trivial
commutative exact algebras.
\end{theorem}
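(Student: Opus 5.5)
We will deduce Theorem \ref{main 7mod} from Theorem \ref{main 7} by analysing the de-equivariantization functor $F:\C=\C(G_2,G_2(a_1),7,q)\to \bar\C=\C_{S_3}$. It is the free module functor $X\mapsto X\otimes A$, where $A=\mO(S_3)$ is the regular algebra of the M\"uger center $\Rep(S_3)$.

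\textbf{Step 1: the action of $\Rep(S_3)$ on simples.} Tensoring with the invertible sign object and with the $2$-dimensional simple $W$ of the M\"uger center permutes the simple objects of $\C$. We determine this action on the 23 simples of Theorem \ref{main 7}(1). For the 15 trivial blocks the action is read off from the fusion rules of the semisimplification $\C(G_2,7,q)$, which carries the same weight combinatorics. We expect the 15 trivial blocks to form 3 free orbits of size 6, each contributing $\dim\mO(S_3)=6$ units of Frobenius–Perron dimension. The remaining 8 simples lie in the $\tilde E_7$ block, on whose vertices $\Rep(S_3)$ acts.

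\textbf{Step 2: translate to $\bar\C$ and compute the blocks.} The simples of $\bar\C$ are summands of the $F(X)$, with $\Hom_{\bar\C}(F X,F Y)=\Hom_\C(X,Y\otimes A)$. Free orbits produce trivial blocks. For the three free orbits this gives $3\cdot 6/\dots$; the precise count must come out to 12 trivial blocks. The $\tilde E_7$ block requires more care. Its Ext quiver is the McKay quiver of the binary octahedral group $\Gamma_{E_7}=2O$ with double arrows, coming from $\wedge(V)$. De-equivariantization by $S_3=2O/Q_8$ replaces $\Rep(2O)$ by $\Rep(Q_8)$ via Clifford theory; concretely, $\wedge(V)\#2O\text{-mod}$ de-equivariantizes to $\wedge(V)\#Q_8\text{-mod}$. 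Since $Q_8$ is the binary dihedral group of type $\tilde D_4$, this gives a block of type $\tilde D_4$ with 5 simples. Together with the 12 trivial blocks this yields $12+5=17$ simples. The hard part here is proving that the action of $\Rep(S_3)$ on the $\tilde E_7$ block really is the restriction along $2O\to S_3$, with $Q_8$ as its kernel. We would first identify the block with $\wedge(V)\#\BC[2O]$-mod compatibly with the tensor action of the M\"uger center, using the construction from the proof of Theorem \ref{main blocks}. Failing that, we would match dimensions and Ext-groups directly.

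\textbf{Step 3: parts (2)–(3).} Part (2) is immediate from $\FP(\C_{S_3})=\FP(\C)/|S_3|=294(\cdots)/6=49(\cdots)$. For part (3), $F$ is a surjective tensor functor. It sends projectives to projectives and simples to semisimples. Every simple of $\bar\C$ is a summand of some $F(X)$ with $X$ simple, so the stable Chevalley property of Theorem \ref{main 7}(3) transfers.

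\textbf{Step 4: part (4).} Let $B$ be a connected commutative exact algebra in $\bar\C$. Exactness forces $B$ to be semisimple as an object. Since $\bar\C$ is non-degenerate, $\FP(\bar\C)$ equals $\FP(B)^2$ times $\FP$ of the category of local modules. Hence $\FP(B)^2$ divides $49(7+15[3]_7+12[5]_7)$ in the ring of algebraic integers of $\mathbb{Q}(\cos(\pi/7))$. The summands of $B$ are invertible or have FP dimension in $\{[k]_7\}$-combinations. We would first show that $\bar\C$ has no nontrivial invertible objects with trivial twist, i.e. no bosons; this uses the twists computed from the weights. Then a norm argument on $\FP(B)$ would rule out the non-invertible summands. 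This arithmetic case analysis of the possible $\FP(B)$ is, together with Step 2, where we expect the most work.
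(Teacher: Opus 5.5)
Your parts (2) and (3) are correct and argued as in the paper. Parts (1) and (4), however, each have a genuine gap.

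\emph{Part (1).} Everything hinges on how $\mathrm{sgn}=L_7$ and $V=L_4$ act on the $23$ simples, and your proposed source for this carries no information. At $l=7$ the only integral weight in the interior of the fundamental alcove is $0$, so $\C(G_2,7,q)\simeq\mathrm{Vec}$. It therefore says nothing about the $15$ simple projectives $T(\lambda)$ with $\lambda$ on walls.

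Your guess is also internally inconsistent. Three orbits of size $6$ cannot account for $15$ simples, and tensoring with $V$ does not permute simples. The count $12$ is asserted, not derived.

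What is true is Lemma \ref{lemma: Rep(S_3)-module structure}, which the paper obtains by explicitly computing tensor multiplication by $L_4$ and $L_7$. Outside the principal block there are three triples $\{L,\mathrm{sgn}\otimes L,V\otimes L\}$ and three pairs $\{L,\mathrm{sgn}\otimes L\}$ with $V\otimes L\simeq L\oplus \mathrm{sgn}\otimes L$. Since $A\otimes L$ is semisimple (Lemma \ref{lemma:semisimple A-modules}) and $\dim\End_A(A\otimes L)=\dim\Hom_\C(L,A\otimes L)$ equals $1$, respectively $3$, one gets $3+3\cdot 3=12$ simple projective $A$-modules. In particular there are no free orbits at all.

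For the principal block, your Clifford-theory reduction to $\wedge(V)\#Q_8$ needs an equivalence with $\wedge(V)\#\BC[2O]$-modules that intertwines the $\Rep(S_3)$-action with restriction along $2O\to S_3$. Theorem \ref{main blocks}, via Proposition \ref{tree}, only gives an abelian equivalence, so this step is unproved. The paper avoids it. The same $\End_A$ count gives the simples $Y(L_0),Y(L_1),Y_1(L_2),Y_2(L_2),Y_3(L_2)$, with projective covers $A\otimes P_0$, $A\otimes P_1$ and the summands of $A\otimes P_2$. Then \eqref{eq:proj_and_simple}, $A\otimes L_3\simeq (A\otimes L_1)^{\oplus 2}$, Corollary \ref{cor: forgetful} and Lemma \ref{projubf} give the Cartan matrix $2\Id+A(\tilde D_4)$, and Proposition \ref{tree} finishes. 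Your ``fallback'' is this argument, but it has to be carried out.

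\emph{Part (4).} The claim that exactness forces $B$ to be semisimple is unjustified and false in general. Theorem \ref{CoSZ} only makes $B$ a product of simple algebras; for example, $\mathrm{Fun}(G)\in\Rep(G)$ in characteristic $p\mid |G|$ is commutative, exact, and not semisimple. It is also unnecessary, since $\FP$ is additive on composition factors.

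The real missing idea is the twist reduction. In the non-semisimple setting one does not know that $\theta_B=\Id$, so excluding bosons is not enough. The paper argues as follows.
\begin{enumerate}
\item $B=\bigoplus_t B^\theta_t$ is a grading (Lemma \ref{tdec}).
\item The grading is strong, with $B^\theta_1$ an indecomposable exact subalgebra (Proposition \ref{exstr}).
\item By Dade's theorem (Corollary \ref{dade3}), once $B^\theta_1=\be$ the components $B^\theta_t$ with $t\neq 1$ are invertible.
\item Hence, by Corollary \ref{red}, it suffices that $\bar\C$ has no nontrivial invertible objects at all and no nontrivial $B=B^\theta_1$. The first holds because only $\be$ has integral $\FP$, which also yields the non-degeneracy you simply assumed.
\end{enumerate}
Only the five principal-block simples have trivial twist, with dimensions $1$, $2+3\alpha$ and $\alpha$ (where $\alpha=[3]_7+[5]_7$), so $\FP(B)=r+s\alpha$. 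Combined with $\FP(B)\le\sqrt{\FP(\bar\C)}$, the norm of $\FP(B)$ being a power of $7$ (as $N(\FP(\bar\C))=7^7$), and Perron--Frobenius dominance of $\FP(\bar\C)/\FP(B)^2$ over its conjugates, this leaves a finite list that is eliminated case by case. Without restricting composition factors to trivial-twist simples, your norm argument would have to range over combinations of all $17$ simple dimensions, which you have not computed. You give no reason that case analysis closes.
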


In \cite[Question 7.20]{ShYa} (see also \cite[Question 6.25]{LaWa}) K.~Shimizu and H.~Yadav asked whether non-semisimple completely
anisotropic categories exist;  Theorem \ref{main 7mod} (4) gives a positive answer to this question.

In \cite[Definition 6.23]{LaWa} (see also \cite[Definition 7.2]{ShYa}) R.~Laugwitz and C.~Walton defined an important Witt equivalence
relation on the set of non-degenerate braided finite tensor categories. In Section \ref{Witt} we prove
some general properties of this relation which imply

\begin{theorem}\label{main 7Witt}
The category $\bar \C(G_2,G_2(a_1),7,q)$ is not Witt equivalent to any semisimple category.
\end{theorem}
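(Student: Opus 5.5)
The plan is to derive Theorem \ref{main 7Witt} from Theorem \ref{main 7mod} (4). The underlying principle is that two Witt equivalent completely anisotropic categories must be braided equivalent. Recall from \cite{LaWa,ShYa} that non-degenerate braided finite tensor categories $\A$ and $\B$ are Witt equivalent if there is a braided equivalence $\A\boxtimes\B^{rev}\simeq \cZ(\D)$ for some finite tensor category $\D$. Equivalently, there are finite tensor categories $\D_1,\D_2$ with $\A\boxtimes\cZ(\D_1)\simeq\B\boxtimes\cZ(\D_2)$.

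The first step is a non-semisimple analogue of the Drinfeld--Müger--Nikshych--Ostrik result: every Witt class contains a completely anisotropic representative, and it is unique up to braided equivalence. In Section \ref{Witt} I would prove the two ingredients.

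\textbf{(a) Local modules.} If $A$ is a connected commutative exact algebra in a non-degenerate $\A$, then the category $\A_A^{loc}$ of local $A$-modules is a non-degenerate braided finite tensor category. Moreover, $\A\boxtimes(\A_A^{loc})^{rev}\simeq\cZ(\A_A)$, where $\A_A$ is the category of right $A$-modules. Hence $\A$ is Witt equivalent to $\A_A^{loc}$.

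\textbf{(b) Lagrangian algebras in products.} If $\A\boxtimes\B^{rev}\simeq\cZ(\D)$, then the image of the canonical Lagrangian algebra $I(\be)\in\cZ(\D)$ is a commutative exact algebra $L$ in $\A\boxtimes\B^{rev}$. Consider $L_1=L\cap(\A\boxtimes\be)$, i.e.\ the maximal subalgebra lying in $\A$. I expect it to be a commutative exact algebra in $\A$; exactness of the subalgebra is where non-semisimplicity must be handled. If $\A$ is completely anisotropic, then $L_1=\be$. A symmetric argument, combined with the usual identification of $L$ as the "graph" of an equivalence between local module categories, then yields a braided equivalence $\A_{L_1}^{loc}\simeq\B_{L_2}^{loc}$.

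Applying (b) with $\A=\bar\C(G_2,G_2(a_1),7,q)$ and $\B$ semisimple, $\A$ is completely anisotropic by Theorem \ref{main 7mod} (4), so $L_1=\be$. This gives a braided equivalence $\A\simeq\B_{L_2}^{loc}$. For semisimple $\B$ and a connected étale algebra $L_2$, the category $\B_{L_2}^{loc}$ is again semisimple. This contradicts the non-semisimplicity of $\A$, which follows from Theorem \ref{main 7mod} (1): a block of type $\tilde D_4$ is not semisimple.

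A cheaper alternative, if the general uniqueness argument is hard, is to bypass full uniqueness. From $\A\boxtimes\B^{rev}\simeq\cZ(\D)$ one could instead show that an indecomposable exact module category $\mathcal{M}$ of $\A$ realizes $\B$, using $\A$'s anisotropy to force $\mathcal{M}$ to be $\A$ itself. Semisimplicity would then transfer through the equivalence $\cZ(\D)\simeq\A\boxtimes\B^{rev}$. Either way, the key property to preserve is semisimplicity along this chain.

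The main obstacle is step (b) in the non-semisimple setting. One must show that the "left part" of an exact commutative algebra in a Deligne product is itself exact. One must also show that the local module correspondence yields a braided equivalence. Here the semisimple arguments use étaleness and cannot be copied verbatim. I would first try to work with exact module categories rather than algebras, using the characterization of Lagrangian algebras as $I(\be)$ for exact $\D$-module categories, and the results of Shimizu on non-degeneracy and Drinfeld centers.
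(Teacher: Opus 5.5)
Your overall strategy is the paper's. You combine Theorem \ref{main 7mod} (4) with uniqueness of the completely anisotropic representative of a Witt class, and with the fact that local modules over a commutative exact algebra in a semisimple category are semisimple. The uniqueness argument is that of \cite[Theorem 5.13]{DMNO}, via $L\cap(\A\boxtimes\be)$ and the ``graph'' of an equivalence, which the paper likewise only cites through \cite[7.4.5]{ShYa}. You also correctly single out exactness of $L_1$ as the non-semisimple input. The paper supplies it in Proposition \ref{subex} (1): a nonzero nilpotent ideal $I$ of a unital subalgebra $B\subset A$ would, by commutativity, generate the nonzero nilpotent ideal $AI$ of $A$, contradicting Theorem \ref{CoSZ}.

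The genuine gap is your opening ``Equivalently''. In \cite{LaWa,ShYa} and in this paper, Witt equivalence means $\A\boxtimes\cZ(\D_1)\simeq\B\boxtimes\cZ(\D_2)$. Getting this from $\A\boxtimes\B^{\mathrm{rev}}\simeq\cZ(\D)$ is easy: use $\B\boxtimes\B^{\mathrm{rev}}\simeq\cZ(\B)$. But step (b) needs the converse, which was open (\cite[Questions 7.18, 7.21]{ShYa}). Without it you cannot start (b) from the hypothesis that $\bar\C(G_2,G_2(a_1),7,q)$ is Witt equivalent to a semisimple category.

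The paper proves this converse as Proposition \ref{witt}. First reduce to $\C:=\A\boxtimes\B^{\mathrm{rev}}\sim_{Witt}\mathrm{Vec}$, i.e. $\C\boxtimes\cZ(\mathcal{E}_1)\simeq\cZ(\mathcal{E}_2)$. Pick a Lagrangian algebra $A\in\cZ(\mathcal{E}_1)$, and let $\tilde A\in\cZ(\mathcal{E}_2)$ be the image of $\be\boxtimes A$. Then $\C\simeq(\C\boxtimes\cZ(\mathcal{E}_1))^{loc}_{\be\boxtimes A}\simeq\cZ(\mathcal{E}_2)^{loc}_{\tilde A}$. So one must show that local modules over an arbitrary, not necessarily Lagrangian, indecomposable commutative exact algebra in a Drinfeld center again form a Drinfeld center (Corollary \ref{local}).

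The missing idea is Theorem \ref{lagr}. Let $\Delta$ be a simple quotient of the possibly non-exact algebra $F(\tilde A)\in\mathcal{E}_2$; it is exact by Theorem \ref{CoSZ}. The action $\tilde A\otimes\Delta\to\Delta$ is an algebra map $F_\Delta(\tilde A)\to\be$ in ${}_\Delta(\mathcal{E}_2)_\Delta$. Its adjoint $\tilde A\to I_\Delta(\be)$ is an algebra map, injective since $\tilde A$ is simple. Then $I_\Delta(\be)$ is a Lagrangian algebra in $\cZ(\mathcal{E}_2)^{loc}_{\tilde A}$, by the dimension count of \cite[Lemma 5.17]{ShYa}. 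Hence this category is a Drinfeld center by \cite[Corollaries 5.14, 5.18]{ShYa}. Once this is in place, your argument (b) runs as in the paper.
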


Thus, the non-semisimple Witt group is different from its semisimple version studied in \cite{DMNO}.

\subsection{} In Section \ref{genconj} we present some conjectures. Most importantly, we expect that the categories
$\C(\g,e,l,q)$  make sense for all nilpotent elements $e\in \g$ (or $e\in \g^L$
in the divisible case). If $e$ is not distinguished,
the category $\C(\g,e,l,q)$ is not finite; however we expect that it is obtained from a finite tensor
category by equivariantization. 

\subsection{Acknowledgements} Some ideas that led to this paper were inspired by participation of one of us (V.O.) in American Institute of Mathematics SQuaRE ``Lie algebras in symmetric tensor categories''; we are very grateful to this institution and to the fellow participants Iv\'an Angiono, Agustina Czenky, Pavel Etingof, Julia Plavnik, and Guillermo Sanmarco. We also thank Kevin Coulembier, Michael Finkelberg, Dmitri Nikshych and Kenichi Shimizu for useful discussions.

\section{Categories $\C(\g,e,l,q)$}
\subsection{Notations} We will use standard notions from representation theory of simple Lie algebras,
see e.g. \cite{Scho}.
Let $\g$ be a simple Lie algebra. Let $\Lambda$ be the weight lattice of $\g$ and
let $\Lambda^+\subset \Lambda$ be the set of  dominant weights. Let $W$ be the Weyl group
and let $\langle , \rangle$ be $W-$invariant scalar product on $\Lambda$ normalized 
by the condition $\langle \alpha, \alpha \rangle =2$ for a short root $\alpha$. Let $\rho \in \Lambda$ be
the half sum of the positive roots; we will often use the dot-action of $W$ given by 
$w\cdot \lambda =w(\lambda +\rho)-\rho$. For any $\lambda \in \Lambda^+$ we will denote
by $\chi_\lambda$ the character of irreducible $\g-$module with highest weight $\lambda$; thus 
$\chi_\lambda$ is an element of the group ring $\Z[\Lambda]$ given by the Weyl character formula.
We will use standard order relation on $\Lambda$: $\lambda \le \mu$ if $\mu -\lambda$ is a sum 
(possibly empty) of positive roots.

\subsection{Tilting modules for quantum groups}\label{qgtitling}
Let $m$ be the ratio of squared lengths of long and short roots. Thus $m=1$ for types $ADE$, $m=2$
for types $BCF$, and $m=3$ for type $G_2$. 
\begin{definition} \label{divisible}
We say that $l\in \Z_{\ge 1}$ is \textit{divisible} if $l$ is divisible
by $m$ and $l$ is \textit{undivisible} otherwise.     \end{definition}

Let $q\in \BC^\times$ be a root of unity such that the order of $q^2$ is $l$. We are going to consider
the category $\Rep(U_q)$ of finite dimensional representations of quantum group $U_q$ (with divided powers) associated with $\g$ 
and $q$. We refer the reader to \cite[Section 3]{AP} for precise definition of the category $\Rep(U_q)$ (where
the category $\Rep(U_q)$ appears in Section 3.19 and is denoted by $\mathscr C$). It follows from
the results of \cite[Chapter 32]{Lb} that $\Rep(U_q)$ has a natural structure of ribbon tensor category 
(as defined e.g. in \cite[8.10]{EGNO}). So the category $\Rep(U_q)$ is equipped with a braiding
$c_{X,Y}: X\otimes Y\simeq Y\otimes X$ and a ribbon structure (or twist) $\theta$ which is an automorphism
of the identity functor of $\Rep(U_q)$ satisfying 
\begin{equation}\label{twist}
    \theta_{X\otimes Y}=(\theta_X\otimes \theta_Y)\circ c_{Y,X}\circ c_{X,Y}, \; \; \theta_{X^*}=(\theta_X)^*.
\end{equation}

Let $\T(\g,q)$ be the full subcategory of $\Rep(U_q)$, consisting of tilting modules, see \cite[3.18]{AP}. We will not need a precise definition
of tilting modules, so we will list the properties which are important for this paper.

(1) The category $\T(\g,q)$ is closed under tensor products and duality; thus, it is a ribbon monoidal category.

(2) The category $\T(\g,q)$ is Karoubian (but not abelian); its indecomposable objects are labeled by
highest weights. So let $T(\lambda)\in \T(\g,q)$ be the indecomposable object with highest weight $\lambda \in \Lambda^+$.

(3) Since $T(\lambda)$ is indecomposable, any endomorphism of $T(\lambda)$  is a scalar plus a nilpotent endomorphism. In particular, this applies to the twist morphism $\theta_{T(\lambda)}$. Explicitly, 
$\theta_{T(\lambda)}-\theta_\lambda \Id$ is nilpotent where
$$\theta_\lambda =q^{\langle \lambda, \lambda +2\rho \rangle }.$$

(4) The character $\mbox{ch}(T(\lambda))$ is a positive combination of $\chi_\mu, \mu \in \Lambda^+$:
$$\mbox{ch}(T(\lambda))=\chi_\lambda +\sum_{\mu \in \Lambda^+, \mu < \lambda} a_{\lambda, \mu}\chi_\mu,
\; \; a_{\lambda, \mu}\in \Z_{\ge 0}.$$

(5) We can compute dimensions of Hom's between tilting modules using characters. Namely
for $T\in \T(\g,q)$ let $\hat T$ be a $\g-$module such that the character of $\hat T$ equals $\mbox{ch}(T)$.
Then for any $T, T' \in \T(\g,q)$ we have 
$$\dim \Hom_{\T(\g,q)}(T,T')=\dim \Hom_\g (\hat T, \hat T').$$

(6) The linkage principle controls when the numbers $a_{\lambda, \mu}$ from (4) are nonzero:
if $a_{\lambda, \mu}\ne 0$ then $\mu \in \tilde W_l\cdot \lambda$ where $\tilde W_l$ is a suitable
affine Weyl group acting via the dot-action, see \cite[3.17]{AP}. Note that the action of $\tilde W_l$ depends on $l$;
moreover there is a significant difference between the cases of divisible and undivisble $l$ (namely, two affine
Weyl groups appearing in these cases are dual to each other).

(7) We extend the action of $\tilde W_l$ to $\Lambda_\R:=\Lambda \otimes \R$. Let $\Lambda^+_\R \subset \Lambda_\R$ be the fundamental Weyl chamber (thus $\Lambda^+_\R$ consists of nonnegative
linear combinations of the fundamental weights).
The {\em fundamental alcove} is defined as
$$C_l(\g)=\left\{ \begin{array}{cl}\{ \lambda \in \Lambda^+_\R -\rho \;|\; \langle \lambda +\rho, \beta_0\rangle \le l\}&
\mbox{if}\; l\; \mbox{is divisible}\\
\{ \lambda \in \Lambda^+_\R -\rho \;|\; \langle \lambda +\rho, \beta_1\rangle \le l\}&
\mbox{if}\; l\; \mbox{is undivisible}\\
\end{array} \right. $$
where $\beta_0$ is the highest root and $\beta_1$ is the highest short root.
The alcove $C_l(\g)$ is a fundamental domain for the dot-action of $\tilde W_l$. For any
$w\in \tilde W_l$, we set $C_w=w\cdot C_l(\g)$, in particular $C_e=C_l(\g)$. 
The subsets $C_w$ are (closed) alcoves and the set $\Lambda_\R$ is tiled by $C_w, w\in \tilde W_l$.
Let $\tilde W_l^0\subset \tilde W_l$ be the subset of shortest representatives of right cosets of $W$ 
in $\tilde W_l$. Then $\Lambda^+_\R-\rho=\bigcup_{w\in \tilde W_l^0}C_w$. It is easy to see that for any
$w\in \tilde W_l$, the set $C_w\cap \Lambda$ is finite.

In this paper we will consider only the case when $C_l(\g)$ contains a weight from $\Lambda$ in its
interior (equivalently, zero weight has a trivial stabilizer with respect to $\tilde W_l$ dot-action). 
Thus, $l\ge mh^\vee$ in the divisible case and $l\ge h$ in the undivisible case, where $h$ and $h^\vee$ are
the Coxeter and dual Coxeter numbers for $\g$ (see e.g. \cite[Figure 2]{Scho} for explicit bounds).

\subsection{Tensor ideals}\label{t ideal}
We recall that the set $\tilde W_l^0$ is a disjoint union of {\em canonical right cells}, see \cite{LX}. 
These cells are naturally labeled by the nilpotent orbits of $\g$ in the undivisible case and of $\g^L$ in the
divisible case (this is combination of \cite[Theorem 1.2]{LX} and \cite[Theorem 4.8]{Lucells4}).
The cell $A_e$ corresponding to a nilpotent element $e\in \g$ (or $e\in \g^L$) is finite if and only if the element
$e$ is distinguished, see \cite[Theorem 8.1]{Lucells4}. 

There is an order relation $\le_R$ on the set of right cells (and hence on the set of canonical right cells).
Given a canonical right cell $A\subset \tilde W_l^0$ let $S_A$ be the set of canonical right cells $B$
such that $A\le_RB$ is not true. Then it follows from \cite[Main Theorem]{Ost1} that the full subcategory
$\T(\g,q)_A\subset \T(\g,q)$ consisting of direct sums of tilting modules
$T(\lambda), \lambda \in \Lambda^+\cap \left(\bigcup_{w\in S_A}C_w\right)$ is a thick tensor ideal
of $\T(\g,q)$ (i.e. $\T(\g,q)_A$ is closed under tensor products  with objects of $\T(\g,q)$). Moreover,
the ideal $\T(\g,q)_A$ admits a unique cover among thick tensor ideals: the ideal consisting of
direct sums of modules 
$T(\lambda), \lambda \in \Lambda^+\cap \left(\bigcup_{w\in \tilde S_A}C_w\right)$ where
$\tilde S_A=S_A\cup \{A\}$. Notice that the indecomposable objects $T(\lambda)$ appearing in this cover
and not appearing in $\T(\g,q)_A$ are labeled by the set 
$$P_A=\{ \lambda \in \Lambda^+\: |\: \lambda \in C_w\;
\mbox{for some}\; w\in A\; \mbox{and}\: \lambda \not \in C_{w'}\; \mbox{for any}\; w'\in B, B\in S_A\}.$$
It is clear that the set $P_A$ is finite if and only if $A$ is finite if and only if $A=A_e$ for some
distinguished nilpotent element $e$. It follows that for a distinguished nilpotent element $e$,
the tensor ideal $\T(\g,q)_{A_e}$ satisfies the assumptions of \cite[Theorem 2.4.1(2)]{CEO}.  

Now let $\cI_e$ be the tensor ideal of $\T(\g,q)$ which is maximal with respect to the following property:
the only identity morphisms contained in this ideal are $\Id_T$ where $T\in \T(\g,q)_{A_e}$ (such maximal ideal
exists by \cite[2.3.1]{CEO}). Using \cite[Theorem 2.4.1(2)]{CEO} we get the following

\begin{corollary}\label{abenv} {\em (}\cite[Theorem 5.4.1]{CEO}\em{)} 
For a distinguished nilpotent element $e$ the quotient category $\T(\g,q)/\cI_e$
admits an abelian monoidal envelope. 
\end{corollary}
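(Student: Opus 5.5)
The plan is to obtain the Corollary directly from \cite[Theorem 2.4.1(2)]{CEO}, which produces a monoidal abelian envelope for a quotient $\T/\cI$ of a Karoubian rigid monoidal category $\T$. The hypotheses of that theorem are, roughly: (a) a thick tensor ideal $\T_0\subset\T$; (b) $\T_0$ has a unique cover among thick tensor ideals; (c) this cover has only finitely many indecomposables outside $\T_0$; (d) $\cI$ is the maximal tensor ideal whose identity morphisms are exactly those of objects of $\T_0$. The work is to check each hypothesis for $\T=\T(\g,q)$, $\T_0=\T(\g,q)_{A_e}$ and $\cI=\cI_e$, using the facts collected in \S\ref{qgtitling} and \S\ref{t ideal}.

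First, $\T(\g,q)$ has the required ambient structure. By properties (1) and (2) of tilting modules it is a Karoubian, rigid (indeed ribbon) monoidal category, and its indecomposables are the $T(\lambda)$. By property (5), its Hom spaces are finite dimensional, since they are computed by Hom spaces of finite dimensional $\g$-modules.

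Next come the ideal-theoretic inputs, taken from \cite[Main Theorem]{Ost1} as recalled above. For the canonical right cell $A=A_e$, the subcategory $\T(\g,q)_{A_e}$ is a thick tensor ideal, which gives (a). It has a unique cover, whose new indecomposables are the $T(\lambda)$ with $\lambda\in P_{A_e}$, which gives (b).

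Finiteness (c) is the key step. Since $e$ is distinguished, \cite[Theorem 8.1]{Lucells4} says the cell $A_e$ is finite. Each alcove $C_w$ contains only finitely many weights, so $P_{A_e}\subset\bigcup_{w\in A_e}(C_w\cap\Lambda)$ is finite. Here one must also use the identification of canonical right cells with nilpotent orbits from \cite{LX}, \cite{Lucells4}, in $\g$ or in $\g^L$ according to whether $l$ is undivisible or divisible.

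The ideal $\cI_e$ exists and is maximal with property (d) by \cite[2.3.1]{CEO}. With (a)–(d) in hand, \cite[Theorem 2.4.1(2)]{CEO} gives that $\T(\g,q)/\cI_e$ admits an abelian monoidal envelope, and this envelope is a finite tensor category.

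I expect the main obstacle to be matching the precise hypotheses of \cite[Theorem 2.4.1(2)]{CEO}, in particular any extra conditions beyond (a)–(d). Such a condition might be that the cover is generated by a single object, or a nondegeneracy or splitting property for the finitely many new indecomposables $T(\lambda)$, $\lambda\in P_{A_e}$. My first move would be to take the direct sum $\bigoplus_{\lambda\in P_{A_e}}T(\lambda)$ as the generator, which is legitimate precisely because $P_{A_e}$ is finite. I would then verify the remaining conditions from the minimality of the cover, as in \cite[Theorem 5.4.1]{CEO}, where this verification is carried out for quantum group tilting modules.
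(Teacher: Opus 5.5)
Your proposal is correct and is essentially the paper's argument. The paper likewise takes $\T(\g,q)_{A_e}$ as a thick tensor ideal with a unique cover (by \cite[Main Theorem]{Ost1}), and notes that the new indecomposables $T(\lambda)$, $\lambda\in P_{A_e}$, are finitely many because $A_e$ is finite exactly when $e$ is distinguished and each $C_w\cap\Lambda$ is finite. It then applies \cite[Theorem 2.4.1(2)]{CEO} to the maximal ideal $\cI_e$ provided by \cite[2.3.1]{CEO}, with no extra hypotheses to verify. One caveat: \cite{Ost1} assumes $l$ odd and undivisible, so for general $l$ you need the paper's remark that the same proof works whenever the fundamental alcove contains an integral weight in its interior.
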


\begin{definition} The abelian monoidal envelope from Corollary \ref{abenv} will be denoted by
$\C(\g,e,l,q)$. 
\end{definition}

It follows from the proof of \cite[Theorem 2.4.1(2)]{CEO} (which uses construction from \cite{BEO}) 
that the category $\C(\g,e,l,q)$ is finite; moreover its projective objects are $T(\lambda), \lambda \in P_A$.

\begin{example} (1) Assume $e=0$ (this element is never distinguished). Then $\T(\g,q)_{A_e}=0$
and $\cI_e=0$.

(2) Assume $e$ is the regular nilpotent element (which is always distinguished). 
Then the cell $A_e$ consists of only the identity element of $\tilde W_l$. It follows that $\cI_e$ is the maximal
proper tensor ideal of $\T(\g,q)$ and  $\C(\g,e,l,q)=\C(\g,l,q)$. All objects of this category
are projective, that is the category $\C(\g,l,q)$ is semisimple.
\end{example}

\begin{remark} In \cite{Ost1} it is assumed that $l$ is odd and undivisible; however the same proof works
if the interior of the alcove $C_l(\g)$ contains an integral weight. 
\end{remark}

\section{The subregular cell}
\subsection{Toy example}\label{toy}
Let $V$ be a two dimensional vector space over $\mathbb C$ and let $\Gamma \subset
SL(V)$ be a subgroup of even order. Let $\epsilon \in \Gamma$ be a unique element of order 2 ($\epsilon$ exists
since the order of $\Gamma$ is even; it is unique since $SL(V)$ contains a unique involution, negative identity).
We consider $V$ as an odd vector space and as an additive group of this vector space. Let $\Gamma \ltimes V$
be the super-group which is a semi-direct product of $V$ with $\Gamma$; we will consider $\epsilon$ as
an element of $\Gamma \ltimes V$. 

We consider the category $\Rep(\Gamma \ltimes V, \epsilon)$ of
finite dimensional representations of $\Gamma \ltimes V$ where $\epsilon$ acts as the parity automorphism,
see e.g. \cite[9.11]{EGNO}. Thus, $\Rep(\Gamma \ltimes V, \epsilon)$ is a symmetric finite tensor category.
As an abelian category it is equivalent to the  category of representations of algebra
$\wedge(V)$ in the category $\Rep(\Gamma)$ (or, equivalently, representations of $\wedge(V)$ with an action
of $\Gamma$  compatible in an obvious sense). The simple objects of $\Rep(\Gamma \ltimes V, \epsilon)$ are irreducible representations of $\Gamma$ where $V\subset \wedge(V)$ acts by zero;
let $\Irr(\Gamma)$ be the set of isomorphism classes of such objects.
The projective cover of the unit object is $\wedge(V)$; and the projective cover of $V_i\in \Irr(\Gamma)$
is $V_i\otimes \wedge(V)$. In particular, the composition factors of the projective cover of $V_i$ are
$V_i$ appearing twice and the irreducible summands of $V_i\otimes V$. Thus, the Cartan matrix of
the category $\Rep(\Gamma \ltimes V, \epsilon)$ is $2\Id+A_\Gamma$ where $A_\Gamma$ is the adjacency
matrix of the McKay graph of $\Gamma$ (recall that the vertices of the McKay graph are elements 
of $\Irr(\Gamma)$ and the number of edges between $V_i$ and $V_j$ is the multiplicity of $V_j$ as 
a direct summand of $V_i\otimes V$). 

\begin{proposition}\label{3.1}
(1) The category $\Rep(\Gamma \ltimes V, \epsilon)$ is of tame representation type.

(2) The algebra $\Ext_{\Rep(\Gamma \ltimes V, \epsilon)}^\bullet (\be,\be)$ is concentrated in even degrees; we have an algebra isomorphism $\Ext_{\Rep(\Gamma \ltimes V, \epsilon)}^{2\bullet} (\be,\be)\simeq S^\bullet(V)^\Gamma$.
\end{proposition}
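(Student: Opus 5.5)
The plan is to reduce both statements to standard facts about the algebra $A=\wedge(V)\rtimes \BC[\Gamma]$. By the discussion above, $\Rep(\Gamma \ltimes V, \epsilon)$ is equivalent, as an abelian category, to finite dimensional $A$-modules.

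For (1), I will show that $A$ is self-injective with radical cube zero, and then reduce tameness to a radical-square-zero algebra.
\begin{itemize}
\item $A$ is Frobenius, since the projective covers listed above are also injective. In fact $\wedge(V)$ is Frobenius with socle $\wedge^2(V)$, and this is the trivial $\Gamma$-module because $\Gamma\subset SL(V)$.
\item The radical of $A$ is $\wedge^{\ge 1}(V)\rtimes\Gamma$. Since $\wedge^3(V)=0$, we get $\mathrm{rad}^3A=0$, and the Loewy layers of $V_i\otimes\wedge(V)$ are $V_i$, then $V_i\otimes V$, then $V_i$.
\item Every indecomposable non-projective $A$-module is annihilated by $\mathrm{soc}(A)$. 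So it suffices to show that $B=A/\mathrm{soc}(A)$ is tame.
\item $B$ has radical square zero. By Gabriel's theorem on radical-square-zero algebras, $B$ is stably equivalent to the hereditary algebra of its separated quiver. Under this correspondence indecomposables match up to finitely many simple projectives, so representation type is preserved.
\item The Ext-quiver of $A$ (hence of $B$) has an arrow $V_i\to V_j$ for each copy of $V_j$ in $V_i\otimes V$. This is the McKay graph of $\Gamma$ with each edge doubled into a pair of opposite arrows.
\item Because $\Gamma$ has even order, $\epsilon=-\Id$ acts on each $V_i$ by a sign, and tensoring with $V$ flips the sign. So the McKay graph is bipartite, and the separated quiver is a disjoint union of two copies of the affine Dynkin graph with bipartite orientation.
\item Path algebras of affine (Euclidean) quivers are tame, by the Donovan--Freislich and Nazarova classification. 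Hence $B$, and so $A$, is tame.
\end{itemize}

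For (2), first observe that $\Gamma$ is linearly reductive in characteristic zero, so $\Ext$ in the equivariant category equals the $\Gamma$-invariants of $\Ext_{\wedge(V)}(\BC,\BC)$. The Koszul resolution of the trivial module over the exterior algebra gives $\Ext^n_{\wedge(V)}(\BC,\BC)\simeq S^n(V^*)$. This is $\Gamma$-equivariant, and the Yoneda product is multiplication in $S^\bullet(V^*)$ by Koszul duality.

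The parity condition enters next. Morphisms in $\Rep(\Gamma \ltimes V, \epsilon)$ must commute with $\epsilon$, and $\epsilon=-\Id$ acts on $S^n(V^*)$ by $(-1)^n$. So the invariants vanish for odd $n$, and
$$\Ext^{2k}(\be,\be)\simeq S^{2k}(V^*)^\Gamma=S^{\bullet}(V^*)^\Gamma_{2k}.$$
Finally, $V\simeq V^*$ as $\Gamma$-modules via the invariant symplectic form. Regrading $V$ into degree $2$ then gives the algebra isomorphism with $S^\bullet(V)^\Gamma$.

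I expect the main obstacle to be making the super-sign bookkeeping in the product precise. The Koszul dual of $\wedge(V)$ with $V$ odd must come out as an honestly commutative polynomial algebra, not a graded-commutative one with signs; because $V$ is odd, the two sign conventions should cancel, and this needs to be checked. A second technical point is transporting tameness through the stable equivalence with the separated quiver. For this I would cite the standard result that self-injective radical-cube-zero algebras are tame exactly when their separated quiver is of Dynkin or Euclidean type.
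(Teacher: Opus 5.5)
Your proof is correct. Part (2) is essentially the paper's argument written out in detail: the paper only notes that the Koszul resolution of $\wedge(V)$ is $GL(V)$-equivariant and then takes $\Gamma$-invariants.

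Part (1), however, follows a genuinely different route. The paper observes that, as an abelian category, $\Rep(\Gamma\ltimes V,\epsilon)$ is the $\Gamma$-equivariantization of $\wedge(V)$-modules, and that $\wedge(V)$ itself is tame \cite{Ri}. This tacitly uses the standard fact that passing to the skew group algebra $\wedge(V)\rtimes\Gamma$ with $|\Gamma|$ invertible preserves representation type, and it never needs the McKay graph.

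You instead analyze $\wedge(V)\rtimes\Gamma$ directly. It is self-injective with $\mathrm{soc}\,A=\mathrm{rad}^2A$ and $\mathrm{rad}^3A=0$, so you reduce to a radical-square-zero algebra. You then identify its separated quiver with two bipartitely oriented copies of the McKay graph. This trades the equivariance transfer principle for Gabriel's stable equivalence and the Donovan--Freislich--Nazarova classification. In return it gives finer information: up to finitely many exceptions, the non-projective indecomposables are representations of an affine quiver of the same type as $\Gamma$. That fits directly with the Cartan matrix $2\Id+A_\Gamma$ and with Proposition \ref{tree}.

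Two remarks on (2). First, the super-sign issue you anticipate does not arise. $\Ext^\bullet(\be,\be)$ with its Yoneda product depends only on the abelian category, i.e.\ on ordinary modules over $\wedge(V)\rtimes\Gamma$. The Koszul dual of the ordinary quadratic algebra $\wedge(V)=T(V)/(S^2V)$ is $T(V^*)/(\wedge^2V^*)=S(V^*)$, which is genuinely commutative. Second, no regrading is needed at the end. Your computation already gives $\Ext^{2k}(\be,\be)\simeq S^{2k}(V)^\Gamma$ with $V$ in cohomological degree $1$, which is exactly the asserted algebra isomorphism.
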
 

\begin{proof} (1) As an abelian category $\Rep(\Gamma \ltimes V, \epsilon)=\Rep(\Gamma \ltimes V)=$
$\Gamma-$equivariantization of representations of $\wedge(V)$. The result follows since it is well
known that the algebra $\wedge(V)$ is of tame representation type, see e.g. \cite{Ri}.

(2) The Koszul resolution for $\wedge(V)$ is equivariant with respect to the action of $GL(V)$. The result
follows.
\end{proof}

Recall that the McKay graph of $\Gamma$ is an affine Dynkin diagram; in particular it is a tree unless
$\Gamma$ is cyclic. Let $X$ be an arbitrary finite tree. We consider the following quiver $\tilde X$ with relations
associated to $X$: the set of vertices of $\tilde X$ is the same as the set of vertices of $X$. For every vertex $i\in X$ we will have loop $\delta_i$ at the vertex $i$ satisfying $\delta_i^2=0$. For every edge $i-j$ of $X$
we will have two arrows $i\to j$ and $j\to i$ in $\tilde X$; the composition of $i\to j$ and 
$j\to i$ is $\delta_i$ and the composition of $i\to j$ and $j\to k$ is zero if $k\ne i$. 
Similarly, the compositions of $i\to j$ with $\delta_i$ and $\delta_j$ are zero. Let $\Rep(\tilde X)$
be the category of finite dimensional representations of the quiver $\tilde X$. It is easy to see
that the Cartan matrix of $\Rep(\tilde X)$ is $2\Id +A(X)$ where $A(X)$ is the adjacency matrix of $X$. 

\begin{proposition}\label{tree}
Assume that the Cartan matrix of a block of some finite tensor category (over arbitrary 
algebraically closed field) is $2\Id+A$ where $A=A(X)$ is the adjacency matrix of a tree $X$. Then
this block is equivalent to $\Rep(\tilde X)$ as an abelian category.
\end{proposition}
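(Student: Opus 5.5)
The plan is to show that the basic algebra $B=\End(\bigoplus_i P_i)^{op}$ of the block is isomorphic to the path algebra of $\tilde X$ modulo the stated relations. Here $P_i$ are the indecomposable projectives and $L_i$ the simples, indexed by the vertices of $X$. Two structural facts about finite tensor categories will be used. First, $B$ is a Frobenius algebra; in fact each $P_i$ is also injective, and the socle of $P_i$ is simple. Second, by Etingof–Ostrik, $\mathrm{soc}(P_i)\simeq L_{\sigma(i)}$ for a permutation $\sigma$ given by tensoring with the distinguished invertible object.

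The first step reads off the Loewy structure from the Cartan matrix. The column of $2\Id+A$ says that $P_i$ has composition factors $L_i$ twice and $L_j$ once for each neighbour $j$ of $i$. Since $X$ is a tree it has no loops, so $\dim \Hom(P_i,P_i)=[P_i:L_i]=2$. If $\sigma(i)=j\neq i$, then $\dim\Hom(P_j,P_i)=1$, and the top of $P_i$ is $L_i$. We would exclude this case as follows. If $\mathrm{soc}\,P_i=L_j$ with $j$ a neighbour of $i$, then $\mathrm{rad}P_i/\mathrm{soc}P_i$ would contain the second copy of $L_i$. Dimension counting together with the symmetry $[P_i:L_j]=[P_j:L_i]$, which holds since $A$ is symmetric, should then force a contradiction. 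The cleaner route is to use that $\dim\Hom(P_i,P_j)=\dim\Hom(P_j,P_{\sigma(i)})$, which follows from the Nakayama functor. Comparing this with the symmetric matrix $2\Id+A$ forces $A_{ij}=A_{j\sigma(i)}$ for all $i,j$, so $\sigma$ is a graph automorphism of $X$ preserving the diagonal. The diagonal entry is $2$ only at $(i,i)$, so $\sigma(i)=i$. Hence $\mathrm{soc}P_i=L_i=\mathrm{top}P_i$. The heart $\mathrm{rad}P_i/\mathrm{soc}P_i$ then has composition factors exactly $L_j$ for the neighbours $j$ of $i$, each once, and these are pairwise non-isomorphic. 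By the socle/top argument each such $L_j$ lies in both $\mathrm{rad}P_i/\mathrm{rad}^2P_i$ and the socle of $P_i/\mathrm{soc}P_i$. So $P_i$ has Loewy length 3 with radical layers $L_i$, $\bigoplus_{j\sim i}L_j$, $L_i$.

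The second step defines the presentation. The Ext-quiver has one arrow $a_{ji}\colon i\to j$ for each edge and no loops, since $\Ext^1(L_i,L_i)=0$ by the layer description. For each edge choose nonzero maps $a_{ji}\in\Hom(P_j,P_i)$, which are unique up to scalar. Define $\delta_i$ to be a nonzero radical endomorphism of $P_i$; it spans $\mathrm{rad}\,\End(P_i)$ and satisfies $\delta_i^2=0$ by Loewy length. The composite $i\to j\to k$ with $k\ne i$ lands in $\Hom(P_k,P_i)$, which is zero unless $k\sim i$. Since $X$ is a tree, $i,j,k$ cannot form a triangle, so this composite vanishes. Any radical map in $\End(P_i)$ is a multiple of $\delta_i$, so the composite $i\to j\to i$ is $c_{ij}\delta_i$. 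We need $c_{ij}\neq0$. This holds because the socle $L_i$ of $P_i$ is reached from every middle factor $L_j$, as $P_i$ has simple socle: each submodule generated by a middle-layer vector contains the socle. Compositions of any arrow with $\delta$ vanish because they would land in radical layer $\geq3$.

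The third step is rescaling and counting. Because $X$ is a tree, the arrows can be rescaled so that all $c_{ij}=1$ and the relation $a_{ij}a_{ji}=\delta_i$ holds consistently from both sides. Concretely, for each edge rescale one of the two arrows; the $\delta_i$ are then defined by one chosen edge at $i$, and the remaining edges at $i$ are fixed by rescaling arrows going outward from a root. The tree has no cycles, so no obstruction arises. This is the step I expect to be the most delicate bookkeeping. This gives a surjection from $\Bbbk\tilde X/(\text{relations})$ onto $B$. The quotient algebra has Cartan matrix $2\Id+A$, as noted before the proposition, so dimensions agree and the surjection is an isomorphism. Therefore the block is equivalent to $\Rep(\tilde X)$.
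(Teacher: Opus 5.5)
Your proposal is correct and follows essentially the paper's proof: present $\End(\bigoplus_i P_i)$ by the quiver $\tilde X$, read off the relations from the Hom-dimensions $2\Id+A$ and the absence of triangles in a tree, get $\lambda_{ij}\neq 0$ from the simplicity of the socle of $\Hom(P,P_i)$, and rescale along the tree. The Nakayama-permutation and Loewy-layer detour is not needed (and your unexplained ``socle/top argument'' for semisimplicity of the heart would itself need the no-triangle property), because $\delta_i^2=0$ and the vanishing of arrow--$\delta$ composites follow directly from $\End(P_i)$ being a $2$-dimensional local algebra and $\Hom(P_i,P_j)$ being $1$-dimensional.
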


\begin{proof} The indecomposable projective objects of the block in question are labeled by the vertices
of $X$. For any indecomposable projective $P_i$ its endomorphism algebra is 2-dimensional and local,
hence isomorphic to the algebra of dual numbers. Also for non-isomorphic indecomposable projectives $P_i$ 
and $P_j$, the space $\Hom(P_i,P_j)$ is one dimensional if there is an edge $i-j$ and is zero otherwise.
It follows that the algebra of endomorphisms $\End(P)$ of the generator $P=\oplus_iP_i$
(each $P_i$ appears with multiplicity 1) can be described via quiver with relations which has the same set of
vertices as $X$, for any vertex we have loop $\delta_i$ and for any edge $i-j$ we have arrows $i\to j, j\to i$.
The following relations are obvious: $\delta_i^2=0$, compositions of $i\to j$ with $\delta_i$ and $\delta_j$
are zero, and the composition of $i\to j$ and $j\to k$ is zero for $k\ne i$. Also the composition $i\to j$ and
$j\to i$ should be $\lambda_{ij}\delta_i$ for some scalars $\lambda_{ij}$. 

Recall that the functor
$\Hom(P,?)$ gives an equivalence of the block and of the category of finite dimensional right modules
over $\End(P)$ which is the same as representations of the quiver above. This equivalence send projective object $P_i$ to $\Hom(P,P_i)$, which can be described as all paths starting at the vertex $i$.. It is easy to see that
in the case $\lambda_{ij}=0$, the quiver representation $\Hom(P,P_i)$ has socle of length at least 2 (it is spanned
by $i\to j$ and by $\delta_i$). This is impossible for a block in finite tensor category: any indecomposable projective object
in such category has a simple socle, see \cite[Remark 6.1.5]{EGNO}. Thus we proved that $\lambda_{ij}\ne 0$
for any edge $i-j$. 

We claim now that we can rescale elements $i\to j$ and $\delta_i$ in such a way that $\lambda_{ij}=1$ for
all edges $i-j$. We use induction on the number of vertices of $X$. In the base case of one vertex there is
nothing to prove. Otherwise assume that $i$ is a leaf connected with only one vertex $j$. Then we can assume
that claim is true for $X\setminus \{i\}$. Now rescale $i\to j$ and $j\to i$ so their composition in one direction is $\delta_j$
and declare the other composition to be $\delta_i$. This completes the proof.
\end{proof}

\subsection{Tilting modules for subregular cell} The subregular cell was described by Lusztig \cite[3.7]{Luex}
for any Coxeter group. Namely, this cell consists of all elements with a unique reduced expression except for
the identity. In particular, the subregular canonical right cell $A_{sr}$ consists of all elements with a unique reduced
expression and starting from $s_0$ (affine reflection). This set is explicitly described in \cite[3.13]{Luex}, see
also \cite[Table 1]{Rasm}. Let $X_{sr}$ be the graph where the vertices are elements of $A_{sr}$; two
vertices are connected by an edge when the corresponding elements of $A_{sr}$ differ by multiplication
by a simple reflection on the right. Using \cite[3.13]{Luex} or \cite[Table 1]{Rasm} one finds that in the case when $A_{sr}$ is finite the graph $X_{sr}$ is a simply laced affine Dynkin diagram which is given in the table from Introduction.

The algorithm for computing characters of (quantized) tilting modules was proposed by Soergel \cite{So1};
this algorithm was proved to be correct in \cite{So2} (the proof in \cite{So2} was partially conditional since
it required knowledge of Kazhdan-Lusztig conjecture for affine Lie algebras at positive fractional level;
this conjecture was proved in \cite{KaTa}). Applying Soergel's algorithm to elements of $A_{sr}$ we get
the following (see \cite{Rasm} for closely related computations):

\begin{proposition}\label{nog2} 
(1) Assume that $\g$ is of type $D_n (n\ge 4), E_n (n=6,7,8)$ or $F_4$ (in particular,
$A_{sr}$ is finite). Then for any
$w\in A_{sr}$ the tilting module $T(w\cdot 0)$ has Weyl filtration of length 2; moreover
\begin{equation}\label{good char}
    \mbox{ch}(T(w\cdot 0))=\chi_{w\cdot 0}+\chi_{ws\cdot 0}
\end{equation}
where $s=s_w$ is a unique simple reflection such that $\ell(ws)=\ell(w)-1$.

(2) Assume that $A_{sr}$ is finite and assume that $\g$ is of type $B_n, C_n, G_2$.
Then \eqref{good char} holds for all elements $w\in A_{sr}$ except for exactly one $w_b\in A_{sr}$.
\end{proposition}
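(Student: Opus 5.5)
The plan is to run Soergel's algorithm directly. It expresses $\mbox{ch}(T(w\cdot 0))$ for $w\in \tilde W_l^0$ through the (antispherical, parabolic) Kazhdan--Lusztig polynomials $n_{y,w}$ of the affine Weyl group $\tilde W_l$ evaluated at $1$:
$$\mbox{ch}(T(w\cdot 0))=\sum_{y\in \tilde W_l^0} n_{y,w}(1)\,\chi_{y\cdot 0}.$$
This is proved by \cite{So2} together with \cite{KaTa}. It therefore suffices to compute the antispherical Kazhdan--Lusztig basis element $\underline N_w$ in the antispherical module $\mathcal N=\mathrm{sgn}\otimes_{\mathcal H_f}\mathcal H$ for each $w\in A_{sr}$. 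We will use the standard inductive construction: if $w=w's$ with $\ell(w)=\ell(w')+1$, then $\underline N_{w'}\underline H_s$ equals $\underline N_w$ plus a combination of $\underline N_y$ with $y<w$. The correction terms are governed by the degree-one coefficients $\mu(y,w')$ with $ys<y$.

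We proceed by induction along the graph $X_{sr}$. Every $w\in A_{sr}$ has a unique reduced expression $s_0s_{i_1}\cdots s_{i_k}$, so $w$ is obtained from its parent $w'=ws_w$ by one right multiplication. The base case is $w=s_0$. Here $\underline N_{s_0}=N_{s_0}+vN_e$ because $e\cdot 0=0$ lies in the interior of the alcove. This gives $\chi_{s_0\cdot 0}+\chi_0$, which is \eqref{good char}. For the inductive step, assume $\underline N_{w'}=N_{w'}+vN_{w's_{w'}}$. Then
$$\underline N_{w'}\underline H_s=N_{w's}+vN_{w'}+vN_{w's_{w'}s}+v^2N_{w's_{w'}}\quad(\text{suitably interpreted}),$$
where terms with $w's_{w'}s\notin \tilde W_l^0$ vanish and those with $y s<y$ are treated with the usual rule. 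The subtraction needed is exactly $\mu$ times $\underline N_{w''}$ for the grandparent $w''=w's_{w'}$, and only when $s$ commutes with $s_{w'}$ or when $s=s_{w''}$ produces a degree-one term. The key combinatorial input is the unique-reduced-expression property. It means consecutive reflections $s_{i_j},s_{i_{j+1}}$ never commute and never produce a braid of the relevant length, so no "wrong" degree-one term survives except in controlled situations. We will verify this using the explicit description of $A_{sr}$ in \cite[3.13]{Luex} and \cite[Table 1]{Rasm}. In the simply laced cases and $F_4$, every edge of the affine diagram traversed by the reduced words has $m_{st}=3$, or $m_{st}=4$ only in a position where the path is short. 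The cancellation then gives $\underline N_w=N_w+vN_{ws}$, and evaluating at $v=1$ yields \eqref{good char}. This proves (1).

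For (2) the same induction runs until the reduced word passes through the edge of the affine Coxeter graph with label $m_{st}=4$ (types $B,C$) or $6$ (type $G_2$) far enough to produce a nontrivial degree-one coefficient. At exactly one element $w_b$, namely the element where the word reaches the branch point of the $\tilde D_{2n}$ or $\tilde E_7$ diagram coming from the multiple bond, $\underline N_{w'}\underline H_s$ acquires an extra term. This extra term is $N_y$ for some $y$ two steps below, with coefficient $v^2$ that is not cancelled. As a result $\mbox{ch}(T(w_b\cdot 0))$ has three Weyl factors. We will check by direct computation that this happens at one element only. For the descendants of $w_b$ in $X_{sr}$, the extra term is removed again by subtracting $\underline N$ of the appropriate lower element, so \eqref{good char} is restored.

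The main obstacle is this bookkeeping near the multiple bond. We must identify $w_b$ precisely and show that the anomaly does not propagate. We would first do the $G_2$ case by hand, using the $12$ elements listed in \cite[Table 1]{Rasm}. We would then handle $B_n,C_n$ by a uniform argument along the long arm of $\tilde D_{2n}$, and cross-check against the computations in \cite{Rasm}.
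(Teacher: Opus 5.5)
Your framework is the paper's, which simply runs Soergel's algorithm on the elements of $A_{sr}$. However, your inductive step is misanalysed. As a result part (1) is not actually argued, and your picture of part (2) is wrong.

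Write $w=w's$ and $w'=w''t$ with $t=s_{w'}$, and assume $\underline N_{w'}=N_{w'}+vN_{w''}$, where $v$ is the Hecke parameter. Then $\underline N_{w'}\underline H_s=N_w+vN_{w'}+vN_{w''}\underline H_s$. Everything depends on $w''s$, not on whether $s$ commutes with $t$ (it never does when $w\in A_{sr}$). There are three cases:
(a) $w''s\notin\tilde W_l^0$, so $N_{w''}\underline H_s=0$;
(b) $s=s_{w''}$, and the degree-zero term $N_{w''}$ is removed by subtracting $\underline N_{w''}=N_{w''}+vN_{w''s}$;
(c) $w''s>w''$ and $w''s\in\tilde W_l^0$.
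In cases (a) and (b) you get $\underline N_w=N_w+vN_{w'}$. In case (c) nothing cancels and $\underline N_w=N_w+vN_{w'}+vN_{w''s}+v^2N_{w''}$, which is exactly your displayed formula.

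Part (1) follows from this trichotomy. In simply laced types, elements of $A_{sr}$ are simple paths from $s_0$ in a tree. Hence $s$ commutes with every letter of $w''$, so $w''s=sw''\notin\tilde W_l^0$ and (c) never occurs. For $F_4$ one checks that each of the seven steps is of type (a) or (b); the last one is $s_0s_1s_2s_3s_2\cdot s_0=s_1s_0s_1s_2s_3s_2$.

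In part (2), case (c) occurs exactly once, but not where you place it. The exceptional $w_b$ is a leaf of $X_{sr}$, not a branch point:
\begin{itemize}
\item For $G_2$, $w_b=s_0s_1s_2s_1s_2s_1s_0$, with $w''=s_0s_1s_2s_1s_2$ and $s=s_0$. The branch point $s_0s_1s_2s_1$ is of type (b).
\item For $B_n$ and $C_n$, $w_b=s_0s_2\cdots s_n\cdots s_2s_1$. The other leaf, ending in $s_0$, is of type (a), since $s_0s_2s_0=s_2s_0s_2$.
\end{itemize}
So $w_b$ has no descendants on which anything must be ``restored''. Moreover, $\mbox{ch}(T(w_b\cdot 0))$ has four Weyl factors, not three: the term $v^2N_{w''}$ always comes together with $vN_{w''s}$, where $w''s\notin A_{sr}$. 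This is precisely \eqref{bad char}. Also, $A_{sr}$ has $8$ elements in type $G_2$ (the vertices of $\tilde E_7$), not $12$.

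With the trichotomy (a)--(c) in place, your induction does prove the proposition. As written, though, you would be looking for the anomaly at the wrong element and predicting the wrong character there.
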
  

Combining Proposition \ref{nog2} with \ref{qgtitling} (5) we get

\begin{corollary}\label{nog22}
Assume that $A_{sr}$ is finite and assume that $\g$ is not of type $G_2$, $B_n$, or $C_n$. 
Let $w,w'\in A_{sr}$. Then 
$$\dim \Hom(T(w\cdot 0), T( w'\cdot 0))=\begin{cases} 2 \;\; \mbox{   if } w=w',\\ 
1 \;\; \mbox{ if } w\ne w',
\mbox{ there is an edge }\; w-w'\; \mbox{in}\; X_{sr},\\
0 \;\; \mbox{ if } w\ne w',
\mbox{ there is no edge}\; w-w'\; \mbox{in}\; X_{sr}.\end{cases}
$$
\end{corollary}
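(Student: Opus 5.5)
We will compute the Hom dimensions using property (5) of \ref{qgtitling}: for $w,w'\in A_{sr}$,
$$\dim \Hom(T(w\cdot 0),T(w'\cdot 0))=\dim\Hom_\g(\widehat{T(w\cdot 0)},\widehat{T(w'\cdot 0)}).$$
By Proposition \ref{nog2}(1), valid here since $\g$ is of type $D_n$, $E_n$ or $F_4$, we have $\widehat{T(w\cdot 0)}=L(w\cdot 0)\oplus L(ws_w\cdot 0)$, where $L(\mu)$ denotes the irreducible $\g$-module of highest weight $\mu$. By Schur's lemma the Hom dimension is then the number of pairs of equal weights between $\{w\cdot 0, ws_w\cdot 0\}$ and $\{w'\cdot 0, w's_{w'}\cdot 0\}$, counted with multiplicity. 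Before using this we will check that all four weights are dominant. The weight $w\cdot 0$ is dominant because $w\in\tilde W_l^0$. For $ws_w$, note that it equals $e$ or is again an element with a unique reduced expression starting with $s_0$ (a prefix of a unique reduced word). Hence $ws_w$ is either $e$ or lies in $A_{sr}\subset\tilde W_l^0$, and in both cases $ws_w\cdot 0$ is dominant.

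Next we use that $0$ lies in the interior of the fundamental alcove. Its dot-stabilizer in $\tilde W_l$ is therefore trivial, so $u\mapsto u\cdot 0$ is injective on $\tilde W_l$. The weight coincidences thus reduce to coincidences of group elements in the multiset $\{w,ws_w\}$ versus $\{w',w's_{w'}\}$. Since $w\ne ws_w$, for $w=w'$ there are exactly two matches, giving dimension $2$.

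Now let $w\ne w'$. A match $w=w's_{w'}$ means $w'=ws$ for a simple reflection $s$ with $\ell(ws)=\ell(w)+1$, so $w,w'$ differ by right multiplication by a simple reflection, which is an edge of $X_{sr}$. The case $w'=ws_w$ is symmetric. A match $ws_w=w's_{w'}$ with $w\ne w'$ cannot occur within $A_{sr}$. Indeed, $ws_w$ has a unique reduced expression, and $w=(ws_w)s_w$ with $\ell(w)=\ell(ws_w)+1$. If also $w'=(ws_w)s_{w'}$ and $s_{w'}\ne s_w$, then the element $u=ws_w$ would have two different length-increasing extensions both lying in $A_{sr}$. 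We will rule this out using the explicit shape of $A_{sr}$ from \cite[3.13]{Luex}: the graph $X_{sr}$ is a tree, and since lengths change by $1$ along edges, each vertex has a unique neighbour of smaller length. Therefore $u$ cannot be the parent of two distinct vertices that are both adjacent to the same vertex $u$ via the stated coincidence, unless...

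This last point is the main obstacle, because branching does happen in $X_{sr}$, for instance at the trivalent node of $\tilde D_n$ or $\tilde E_n$. So the count is not obviously at most one. We resolve it as follows. If $ws_w=w's_{w'}=u$, then both $w$ and $w'$ are children of $u$, and the count contributes an extra $1$ even though $w$ and $w'$ are not adjacent in $X_{sr}$. To exclude this we argue that the branching elements of $A_{sr}$ are exactly the ones where $ws$ has two reduced expressions. If $u=s_{i_1}\cdots s_{i_k}$ and both $us$ and $us'$ have unique reduced expressions, then $s,s'$ are both non-commuting with $s_{i_k}$. Looking at the actual branch point of the affine Dynkin diagram, the tree $X_{sr}$ branches at an element $u$ whose children $us$ and $us'$ exist. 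The element $u$ itself lies in $A_{sr}$, and its contributions go into $\chi_u$.

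Here we must be careful. $T(us\cdot 0)$ contains $\chi_u$ and $T(us'\cdot 0)$ contains $\chi_u$ too, so $\dim\Hom=1$ would hold between siblings, which are not adjacent in $X_{sr}$. We therefore expect the resolution to lie in the precise definition of edges of $X_{sr}$ and in the identification of the graph via \cite[Table 1]{Rasm}. We will recheck the table: siblings in the reduced-word tree are joined in the graph $X_{sr}$ whose Cartan-type matrix appears here. This is consistent with the McKay graph being the affine diagram, since the length-tree and the affine diagram may differ at branch points. We will verify case by case, for $D_n$, $E_n$ and $F_4$, that the relation ``share a weight'' coincides with the edge relation of $X_{sr}$ as tabulated in \cite{Rasm}. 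This verification is the step we expect to require the most care.
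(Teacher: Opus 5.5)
You follow the paper's route exactly: its entire proof is ``combine Proposition~\ref{nog2} with \ref{qgtitling}(5)''. Your implementation (dominance of $w\cdot 0$ and $ws_w\cdot 0$, injectivity of $u\mapsto u\cdot 0$ because $0$ is interior, and counting coincidences between $\{w,ws_w\}$ and $\{w',w's_{w'}\}$) correctly settles the case $w=w'$ and the case where $w,w'$ differ by one simple reflection. But the proposal does not close, and it breaks exactly where you noticed.

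For $w\in A_{sr}$ the only right descent is the last letter of its unique reduced word. So $ws_w$ is the parent of $w$ in the rooted tree $X_{sr}$, and two distinct children $us$, $us'$ of a vertex $u$ share the Weyl factor $\chi_{u\cdot 0}$. Such $u$ exist in every type covered: $X_{sr}$ is of type $\tilde D_n$ or $\tilde E_n$, and a vertex of valency $\ge 3$ is never the root $s_0$, which is a leaf. Your proposed way out, that siblings are joined in $X_{sr}$ and this will show up in \cite{Rasm}, is not available. An edge of $X_{sr}$ is by definition right multiplication by a single simple reflection, whereas $(us)^{-1}(us')=ss'$. Moreover, sibling edges would create triangles through $u$, while $X_{sr}$ is an affine Dynkin diagram and hence a tree. (Your intermediate claim that $ws_w=w's_{w'}$ with $w\neq w'$ ``cannot occur within $A_{sr}$'' is false for the same reason.)

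The obstruction is genuine, so no case-by-case check can remove it. With the paper's own inputs, the displayed formula, read as a statement about $\Hom$ in $\T(\g,q)$, fails for siblings. For $D_4$ (with $s_2$ the central node):
\begin{itemize}
\item $A_{sr}=\{s_0,s_0s_2,s_0s_2s_1,s_0s_2s_3,s_0s_2s_4\}$;
\item Proposition~\ref{nog2}(1) gives $\mbox{ch}(T(s_0s_2s_i\cdot 0))=\chi_{s_0s_2s_i\cdot 0}+\chi_{s_0s_2\cdot 0}$, which also follows directly from Soergel's algorithm;
\item \ref{qgtitling}(5) then gives $\dim\Hom(T(s_0s_2s_1\cdot 0),T(s_0s_2s_3\cdot 0))=1$, although these two vertices are not adjacent.
\end{itemize}
The paper's one-line derivation overlooks the same point. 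What the character argument actually proves is the formula with the middle case replaced by ``$w\neq w'$ adjacent or siblings''.

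What Proposition~\ref{Cartansr} needs is vanishing for non-adjacent pairs modulo $\cI_{e_{sr}}$. For siblings this can be obtained with translation functors, as in Proposition~\ref{prop:bad weight homs}(1):
\begin{itemize}
\item $T(us\cdot 0)$ is a direct summand of $\varTheta_sT(u\cdot 0)$.
\item The character of $\varTheta_sT(us'\cdot 0)$ is $\chi_{uss'\cdot 0}+\chi_{us'\cdot 0}+\chi_{us\cdot 0}+\chi_{u\cdot 0}$. Soergel's algorithm identifies this with $\mbox{ch}(T(uss'\cdot 0))$, so $\varTheta_sT(us'\cdot 0)=T(uss'\cdot 0)$.
\item $uss'=us's$ has two reduced expressions, so $T(uss'\cdot 0)\in\T(\g,q)_{A_{sr}}$.
\end{itemize}
By self-adjointness of $\varTheta_s$, any morphism $T(us\cdot 0)\to T(us'\cdot 0)$ is the restriction of $\epsilon\circ\varTheta_s(g)$ for some $g\colon T(u\cdot 0)\to T(uss'\cdot 0)$. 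Here $\epsilon\colon\varTheta_s\varTheta_sT(us'\cdot 0)\to T(us'\cdot 0)$ is the counit. The morphism therefore factors through $\varTheta_sT(uss'\cdot 0)\in\T(\g,q)_{A_{sr}}$, and so lies in $\cI_{e_{sr}}$.
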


Using {\em translation functors} one extends Proposition \ref{nog2} and Corollary \ref{nog22} 
to tilting modules $T(w\cdot \lambda)$, where $\lambda$ is in the interior of the fundamental 
alcove $C_e$. Moreover, using \cite[7.3.2]{So1} we get 

\begin{corollary}\label{nog222}
Assume that $A_{sr}$ is finite and assume that $\g$ is not of type $G_2$, $B_n$, or $C_n$. Let $\lambda \in P_{A_{sr}}$
but $\lambda$ is not contained in an interior of an alcove (in other words, $\lambda$ is on the wall). Then $T(\lambda)$ is simple and for any $\lambda' \in P_{A_{sr}}$ 
$$\dim \Hom(T(\lambda), T(\lambda'))=\dim \Hom(T(\lambda'),T(\lambda))=\left\{ \begin{array}{ccc} 1&\mbox{if}&\lambda=\lambda'\\ 0&\mbox{if}&\lambda \ne \lambda'. \end{array}\right.
$$
\end{corollary}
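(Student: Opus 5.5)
The plan is to exploit the fact that a wall weight $\lambda\in P_{A_{sr}}$ is obtained from an interior weight by translation onto the wall, and to compute characters via Soergel's algorithm as recorded in \cite[7.3.2]{So1}. Take $\lambda\in P_{A_{sr}}$ lying on a wall of some alcove $C_w$, $w\in A_{sr}$. Since $\lambda$ is not in $\bigcup_{w'\in B, B\in S_{A_{sr}}}C_{w'}$, every alcove whose closure contains $\lambda$ lies in $A_{sr}$ or in cells strictly above it in $\le_R$. The stabilizer of $\lambda$ under the dot action of $\tilde W_l$ is then a nontrivial parabolic subgroup. By \cite[7.3.2]{So1}, the character of a tilting module with singular highest weight is obtained from the character for the longest element of the corresponding coset with regular weight. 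This means $\mathrm{ch}\,T(\lambda)$ is governed by the coset of the maximal element $w$ with $\lambda\in C_w$.

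The key combinatorial observation is that, because elements of $A_{sr}$ have unique reduced expressions, no element of $A_{sr}$ can be the longest element of a coset $xW_J$ with $W_J$ nontrivial and $x$ chosen so that the coset lies near $A_{sr}$. Such a coset contains $w$ and $ws$ for the wall reflection $s$, and since $\ell(ws)<\ell(w)$ forces $s=s_w$, the wall is the one corresponding to $s_w$. Pushing the corresponding regular tilting module $T(w\cdot\mu)$, with $\mu$ in the interior of $C_e$, onto the wall via the translation functor $\Theta$ and using \eqref{good char} (after translation) gives
\[
\mathrm{ch}\,\Theta T(w\cdot\mu)=\chi_\lambda+\chi_\lambda=2\chi_\lambda .
\]
By Soergel's description $\Theta T(w\cdot\mu)\simeq T(\lambda)^{\oplus 2}$, hence $\mathrm{ch}\,T(\lambda)=\chi_\lambda$. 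I expect this step to be the main obstacle: one has to match the wall containing $\lambda$ with the reflection $s_w$ and check that the case analysis covers walls lying in several closures (codimension $\ge2$ faces). This is where I would rely on \cite[7.3.2]{So1} rather than argue by hand.

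Once $\mathrm{ch}\,T(\lambda)=\chi_\lambda$, it follows that $T(\lambda)$ equals the Weyl module and is simple in $\T(\g,q)$, and its image in $\C(\g,e_{sr},l,q)$ is a simple projective object. The Hom computation then comes from \ref{qgtitling}(5). We have $\dim\Hom(T(\lambda),T(\lambda'))$ equal to the multiplicity of $\chi_\lambda$ in $\mathrm{ch}\,T(\lambda')$. For $\lambda'$ on a wall this multiplicity is $\delta_{\lambda\lambda'}$. For $\lambda'=w'\cdot\mu$ interior, $\mathrm{ch}\,T(\lambda')=\chi_{w'\cdot\mu}+\chi_{w's\cdot\mu}$ involves only regular weights in the linkage class of $\mu$, which never equal the singular weight $\lambda$, so the multiplicity is $0$. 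The symmetric statement follows the same way, since (5) is symmetric in the characters. Thus each such $T(\lambda)$ forms a trivial block.
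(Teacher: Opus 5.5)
Your proposal is correct and follows essentially the paper's route: translate the length-two tilting module $T(w\cdot\mu)$ onto the $s_w$-wall (the step packaged in \cite[7.3.2]{So1}), conclude $\mathrm{ch}\,T(\lambda)=\chi_\lambda$, and read off the Hom spaces from \ref{qgtitling}(5) together with linkage.

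Two points need tightening. First, your ``key observation'' is misstated; it should say that the longest element of the coset of alcoves around $\lambda$ lies in $A_{sr}$ by the definition of $P_{A_{sr}}$, so it has the single right descent $s_w$ and hence $|J|=1$, which already settles your codimension-$\ge 2$ worry. Second, simplicity of $T(\lambda)$ in $\C(\g,e_{sr},l,q)$ is not automatic but follows from your Hom computation, since $[X:L(\kappa)]=\dim\Hom_\C(T(\kappa),X)$ for the projective covers $T(\kappa)$, $\kappa\in P_{A_{sr}}$.
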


Let us now consider the case of $G_2$, $B_n$, and $C_n$. Suppose $A_{sr}$ is finite. The Coxeter diagram for $\tilde W_l$ is shown in Figure \ref{fig:coxeterBn} for types $B_n$ and $C_n$, and in Figure \ref{fig:coxeterG2} for type $G_2$. The graphs $X_{sr}$ for types $B_n$ (and $C_n$) and $G_2$ are depicted in Figures \ref{fig:Xsr_Bn} and \ref{fig:Xsr_G2} respectively.

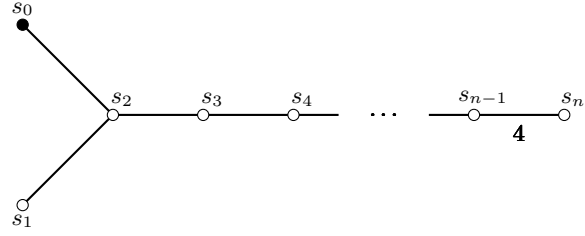
\begin{figure}[htbp]
  \centering
  \begin{tikzpicture}[scale=1.2, every node/.style={font=\small}]

\draw[black, thick] (1,0) -- (3.5,0);   
\draw[black, thick] (4.5,0) -- (6,0);   
\draw[black, thick] (1,0) -- (0,1);     
\draw[black, thick] (1,0) -- (0,-1);    

\foreach \x/\y/\name/\fillcolor in {0/-1/s_1/white, 1/0/s_2/white, 2/0/s_3/white, 3/0/s_4/white, 5/0/s_{n-1}/white, 6/0/s_n/white, 0/1/s_0/black} {
    \node[draw, circle, fill=\fillcolor, inner sep=1.5pt] at (\x,\y) {};
    \ifnum\x=0
        \ifnum\y=-1
            \node[below] at (\x,\y) {$s_1$};
        \else
            \node[above] at (\x,\y) {$s_0$};
        \fi
    \else
        \node[above] at (\x+0.1,\y) {$\name$};
    \fi

    \node at (4,0) {$\ldots$};
    \node at (5.5,-0.2) {4};
}

\end{tikzpicture}
  \caption{Coxeter diagram for $\tilde W_l$ for $\mathfrak g$ of type $B_n$ or $C_n$}
  \label{fig:coxeterBn}
\end{figure}

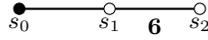
\begin{figure}[htbp]
  \centering
  \begin{tikzpicture}[scale=1.2, every node/.style={font=\small}]

\draw[black, thick] (0,0) -- (2,0);

\node[draw, circle, fill=black, inner sep=1.5pt] at (0,0) {};
\node[below] at (0,0) {$s_0$};

\node[draw, circle, fill=white, inner sep=1.5pt] at (1,0) {};
\node[below] at (1,0) {$s_1$};

\node[draw, circle, fill=white, inner sep=1.5pt] at (2,0) {};
\node[below] at (2,0) {$s_2$};

\node at (1.5,-0.2) {$\mathbf 6$};

\end{tikzpicture}
  \caption{Coxeter diagram for $\tilde W_l$ for $\mathfrak g$ of type $G_2$}
  \label{fig:coxeterG2}
\end{figure}

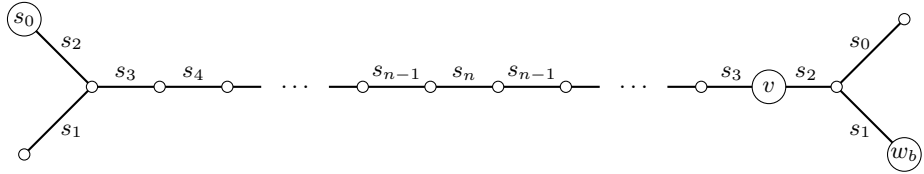
\begin{figure}[htbp]
  \centering
  \begin{tikzpicture}[scale=0.9, every node/.style={font=\small}]

\draw[black, thick] (1,0) -- (3.5,0);
\draw[black, thick] (4.5,0) -- (8.5,0);
\draw[black, thick] (9.5,0) -- (12,0);

\draw[black, thick] (0,1) -- (1,0);
\draw[black, thick] (0,-1) -- (1,0);

\draw[black, thick] (12,0) -- (13,1);
\draw[black, thick] (12,0) -- (13,-1);

\foreach \x/\y in {
    0/1, 0/-1,
    1/0, 2/0, 3/0,
    5/0, 6/0, 7/0, 8/0,
    10/0, 11/0, 12/0,
    13/1, 13/-1
}{
    \node[draw, circle, fill=white, inner sep=1.5pt] at (\x,\y) {};
}

\node at (0.7,0.65) {$s_2$};
\node at (0.7,-0.65) {$s_1$};

\node at (1.5,0.2) {$s_3$};
\node at (2.5,0.2) {$s_4$};

\node at (4,0) {$\ldots$};

\node at (5.5,0.2) {$s_{n-1}$};
\node at (6.5,0.2) {$s_n$};
\node at (7.5,0.2) {$s_{n-1}$};

\node at (9,0) {$\ldots$};

\node at (10.45,0.2) {$s_{3}$};
\node at (11.55,0.2) {$s_2$};

\node at (12.35,0.65) {$s_0$};
\node at (12.35,-0.65) {$s_1$};

\node[draw, circle, fill=white, inner sep=4.5pt] at (13,-1) {};
\node at (13,-1) {$w_b$};

\node[draw, circle, fill=white, inner sep=4.5pt] at (11,0) {};
\node at (11,0) {$v$};

\node[draw, circle, fill=white, inner sep=4.5pt] at (0,1) {};
\node at (0,1) {$s_0$};

\end{tikzpicture}
  \caption{Graph $X_{sr}$ for types $B_n$ and $C_n$}
  \label{fig:Xsr_Bn}
\end{figure}

\begin{figure}[htbp]
  \centering
  \begin{tikzpicture}[scale=1.2, every node/.style={font=\small}]

\draw[black, thick] (0,0) -- (6,0);
\draw[black, thick] (3,0) -- (3,-1);

\foreach \x/\y in {
    1/0,
    2/0,
    3/0,
    3/-1,
    4/0,
    5/0
}{
    \node[draw, circle, fill=white, inner sep=1.5pt] at (\x,\y) {};
}

\node[draw, circle, fill=white, inner sep=4.5pt] at (0,0) {};
\node at (0,0) {$s_0$};

\node at (0.5,0.15) {$s_1$};
\node at (1.5,0.15) {$s_2$};
\node at (2.5,0.15) {$s_1$};
\node at (3.45,0.15) {$s_2$};
\node at (4.55,0.15) {$s_1$};
\node at (5.5,0.15) {$s_0$};

\node at (3.2,-0.55) {$s_0$};

\node[draw, circle, fill=white, inner sep=4.5pt] at (6,0) {};

\node at (6,0) {$w_b$};

\node[draw, circle, fill=white, inner sep=4.5pt] at (4,0) {};

\node at (4,0) {$v$};

\end{tikzpicture}
  \caption{Graph $X_{sr}$ for type $G_2$}
  \label{fig:Xsr_G2}
\end{figure}

Let $w_b$ be the element of $\tilde W_l$ as in Proposition \ref{nog2} then
$$
\begin{cases}
    w_b = s_0s_2s_3\ldots s_{n-1}s_n s_{n-1}\ldots s_3s_2 s_1, \text{ if } \mathfrak{g} \text{ is of type } B_n \text{ or } C_n,\\
    w_b = s_0s_1s_2s_1s_2s_1s_0, \text{ if } \mathfrak{g} \text{ is of type } G_2.
\end{cases}
$$
We have
\begin{equation}\label{bad char}
    \mbox{ch}(T(w_b\cdot 0))=\chi_{w_b\cdot 0}+\chi_{vs\cdot 0}+\chi_{vt\cdot 0} + \chi_{v\cdot 0},
   \end{equation}
where $s = s_1, t=s_2$ in types $B_n$ and $C_n$,  $s=s_0, t =s_1$ in type  $G_2$, and $v=w_bst$.

Note that elements $w_b, v, vt$ are all in $A_{sr}$, whereas $vs$ is in some cell $B$ with $B\in S_{A_{sr}}$.

Another computation shows that 
$$
\mbox{ch}(T(v\cdot 0))=\chi_{v\cdot 0}+\chi_{vr\cdot 0},
$$
$$
\mbox{ch}(T(vt\cdot 0))=\chi_{vt\cdot 0}+\chi_{v\cdot 0},
$$
$$
\mbox{ch}(T(w_bt\cdot 0))= \chi_{w_bt\cdot 0}+\chi_{w_b\cdot 0} + \chi_{vst\cdot 0}+\chi_{vs\cdot 0}+\chi_{vt\cdot 0}+\chi_{v\cdot 0}.
$$

Our goal is to understand $\Hom(T(\lambda), T(\lambda'))$ modulo $\mathcal I_{e_{sr}}$. Let us denote the quotient by $\Hom_\C(T(\lambda), T(\lambda'))$ (here $\C$ stands for $\C(\mathfrak g, e_{sr}, l,q)$).

For each simple reflection $s_i$, let $\varTheta_{s_i}$ denote the wall-crossing translation functor corresponding to $s_i$. Then $T(v\cdot 0)$ is a direct summand in $\varTheta_r T(vr\cdot 0)$, $T(vt \cdot 0)$ is a direct summand in $\varTheta_t T(v\cdot 0)$, and $T(w_b\cdot 0)$ is a direct summand in $\varTheta_s T(vt\cdot 0)$. At the same time we have
$$
\varTheta_r T(w_b\cdot 0)=T(vs\cdot 0),
$$
$$
\varTheta_t T(w_b\cdot 0) = T(w_bt\cdot 0)\oplus T(vt\cdot 0),
$$
$$
\varTheta_s T(w_b\cdot 0)= T(w_b\cdot 0)\oplus T(w_b\cdot 0).
$$

\begin{proposition} \label{prop:bad weight homs} $ $\\ \vspace{2mm}
 (1) $\dim \Hom_\C(T(v\cdot 0), T(w_b\cdot 0))\le \dim \Hom_\C(T(vr\cdot 0), \varTheta_r T(w_b\cdot 0))=0.$
\\
\vspace{2mm}(2)  $\dim \Hom_\C(T(vt\cdot 0), T(w_b\cdot 0)) \le \dim\Hom_\C(T(v\cdot 0), \varTheta_t T(w_b\cdot 0)) \le 1.$\\
(3) $\dim \Hom_\C(T(w_b\cdot 0), T(w_b\cdot 0)) \le \dim\Hom_\C(T(vt\cdot 0), \varTheta_s T(w_b\cdot 0))\le 2.$

\end{proposition}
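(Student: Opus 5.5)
Each of the three statements has the same shape: a first inequality, which is a formal consequence of adjunction together with the summand relations listed just before the proposition, and a second bound, which is a character computation using \ref{qgtitling}(5) plus the vanishing of morphisms through objects of $\T(\g,q)_{A_{sr}}$ modulo $\cI_{e_{sr}}$.

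\textbf{First inequality.} The wall-crossing functors $\varTheta_x$ ($x=r,s,t$) are self-adjoint endofunctors of $\T(\g,q)$. Each is a composite of tensoring with a tilting module and projecting to a linkage class, so it preserves the thick tensor ideal $\T(\g,q)_{A_{sr}}$. I also expect it to preserve $\cI_{e_{sr}}$: tensoring with an object preserves any tensor ideal, and projection to a block is tensoring with an idempotent. Hence the adjunction isomorphism descends to the quotient,
$$\Hom_\C(\varTheta_x X, Y)\simeq \Hom_\C(X,\varTheta_x Y).$$
We know $T(v\cdot 0)$ is a direct summand of $\varTheta_r T(vr\cdot 0)$, $T(vt\cdot 0)$ of $\varTheta_t T(v\cdot 0)$, and $T(w_b\cdot 0)$ of $\varTheta_s T(vt\cdot 0)$. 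So, for example,
$$\Hom_\C(T(v\cdot 0),T(w_b\cdot 0))\subset \Hom_\C(\varTheta_rT(vr\cdot 0),T(w_b\cdot 0))\simeq \Hom_\C(T(vr\cdot 0),\varTheta_rT(w_b\cdot 0)),$$
and cases (2) and (3) are identical with $(t,v)$ and $(s,vt)$.

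\textbf{Second bounds.} Here I substitute the known decompositions of $\varTheta_x T(w_b\cdot 0)$.

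For (1), $\varTheta_r T(w_b\cdot 0)=T(vs\cdot 0)$. Since $vs$ lies in a cell $B\in S_{A_{sr}}$, the weight $vs\cdot 0$ lies in the ideal $\T(\g,q)_{A_{sr}}$. The identity of $T(vs\cdot 0)$ therefore lies in $\cI_{e_{sr}}$, so every morphism into this object vanishes in $\C$, giving $0$.

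For (2), $\varTheta_tT(w_b\cdot 0)=T(w_bt\cdot 0)\oplus T(vt\cdot 0)$.
\begin{itemize}
\item First I check whether $w_bt\cdot 0$ lies outside $P_{A_{sr}}$, i.e. in a lower cell. The element $w_bt$ has two reduced expressions, so it is not in the subregular cell, and it is longer, so it should lie in a cell of $S_{A_{sr}}$. If so, that summand dies in $\C$.
\item Then $\Hom_\C(T(v\cdot 0),T(vt\cdot 0))$ is bounded by $\dim\Hom(T(v\cdot 0),T(vt\cdot 0))$ computed by \ref{qgtitling}(5). Using $\mathrm{ch}\,T(v\cdot 0)=\chi_{v\cdot0}+\chi_{vr\cdot0}$ and $\mathrm{ch}\,T(vt\cdot0)=\chi_{vt\cdot0}+\chi_{v\cdot0}$, this dimension is $1$.
\end{itemize}

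For (3), $\varTheta_sT(w_b\cdot0)=T(w_b\cdot0)^{\oplus 2}$, so the bound is $2\dim\Hom_\C(T(vt\cdot0),T(w_b\cdot0))\le 2$ by (2).

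\textbf{Main obstacle.} The step I expect to need the most care is the $T(w_bt\cdot0)$ summand in (2). If $w_bt\cdot 0$ does not lie in a strictly lower cell, this summand survives. In that case I would instead bound $\Hom_\C(T(v\cdot0),T(w_bt\cdot0))$ via characters. By \ref{qgtitling}(5) the dimension is the number of common Weyl factors, and since $\mathrm{ch}\,T(w_bt\cdot0)$ contains $\chi_{v\cdot0}$ but not $\chi_{vr\cdot0}$, it is $1$. I would then argue that this morphism factors through an object of the ideal, e.g. through the $\chi_{vs}$-related layer, using the socle/head structure. This could make the bound $\le 1$ require the refined argument rather than a plain count. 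A secondary point to verify is that $\varTheta_x$ genuinely preserves $\cI_{e_{sr}}$, which rests on the maximality in its definition.
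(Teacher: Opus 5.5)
Your proposal is correct and is essentially the paper's proof. The first inequalities come from self-adjointness of the wall-crossing functors on the quotient together with the listed summand relations. The second bounds come from $T(vs\cdot 0)$ and $T(w_bt\cdot 0)$ lying in $\T(\g,q)_{A_{sr}}$, the character count $\dim \Hom(T(v\cdot 0),T(vt\cdot 0))=1$, and $\varTheta_sT(w_b\cdot 0)=T(w_b\cdot 0)^{\oplus 2}$.

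The ``main obstacle'' you flag does not occur, though the reason is not that $w_bt$ is longer: $w_bt\cdot 0$ is dominant, so $w_bt\in\tilde W_l^0$, and it is neither $e$ nor in $A_{sr}$ (it has two reduced expressions). Since only $\{e\}$ and $A_{sr}$ lie $\ge_R A_{sr}$, its cell is in $S_{A_{sr}}$, which is exactly how the paper discards the homomorphism factoring through $T(w_bt\cdot 0)$.
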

\begin{proof}
(1) This is a corollary of the computation above and the observation that $vs$ lies outside the subregular cell, so morphisms into $T(vs\cdot 0)$ lie in $\mathcal I_{e_{sr}}$.

(2) Note that the element $w_bt$ is outside of the subregular cell. We have $$\dim\Hom(T(v\cdot 0), \varTheta_t T(w_b\cdot 0)))=2,$$ however, one of the two homomorphisms factors through the summand $T(w_bt\cdot 0)$ of $\varTheta_t T(w_b\cdot 0)$, and thus lies in $\mathcal I_{e_{sr}}$.

(3) We have 
$$
\dim\Hom_\C(T(vt\cdot 0), \varTheta_sT(w_b\cdot 0))=2\dim\Hom_\C(T(vt\cdot 0), T(w_b\cdot 0))\le 2\cdot 1
$$
by part (2).
\end{proof}

Applying the result above together with Proposition \ref{nog2} (2), we get a result analogous to Corollary \ref{nog22} for the quotient $\Hom_\C(T(\lambda), T(\lambda'))$.

\begin{corollary} \label{cor:bad weight homs}
    Assume $A_{sr}$ is finite and $\mathfrak g$ is of type $B_n, C_n$, or $G_2$. Let $w, w'\in A_{sr}$. Then
    $$
    \dim\Hom_\C(T(w\cdot 0), T(w'\cdot 0))\le \begin{cases}
        2 \;\; \text{ if } w=w',\\
        1 \;\; \text{ if } w\neq w', \text{ there is an edge } w-w' \text{ in } X_{sr},\\
        0 \;\; \text{ if } w\neq w', \text{ there is no edge } w-w' \text{ in } X_{sr}.
    \end{cases}
    $$
\end{corollary}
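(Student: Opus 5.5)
The idea is to bound $\dim \Hom_\C(T(w\cdot 0),T(w'\cdot 0))$ by the honest dimension $\dim \Hom(T(w\cdot 0),T(w'\cdot 0))$ in $\T(\g,q)$. Passing to the quotient by $\cI_{e_{sr}}$ can only decrease Hom dimensions, since $\Hom_\C$ is a quotient of $\Hom_{\T(\g,q)}$. We then compute these dimensions via characters using \ref{qgtitling}(5), and handle the pairs involving $w_b$ by Proposition \ref{prop:bad weight homs}. Since $\Hom$ between tilting modules is computed by characters, and $\dim \Hom_\g(\hat T,\hat T')$ depends only on the Weyl-character multiplicities, the bound is symmetric in $w,w'$. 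Likewise, Proposition \ref{prop:bad weight homs} can be applied with the roles of source and target swapped by using duality: $T(\lambda)^*$ is again tilting, and $\Hom(X,Y)\simeq\Hom(Y^*,X^*)$, or equivalently we use biadjointness of translation functors in the other variable.

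\textbf{Case 1: $w,w'\ne w_b$.} By Proposition \ref{nog2}(2), both characters have the form $\chi_{w\cdot 0}+\chi_{ws_w\cdot 0}$. The dimension $\dim\Hom_\g$ equals the number of common terms, counted with multiplicity, in the two two-term sums. For $w=w'$ this is $2$. If $w\ne w'$, a common term requires either $w'=ws_w$, $w=w's_{w'}$, or $ws_w=w's_{w'}$. In $A_{sr}$ every element has a unique reduced expression, so its predecessor $ws_w$ is uniquely determined. The first two alternatives say exactly that $w-w'$ is an edge of $X_{sr}$. The third would force $w=w'$, because the relevant elements are distinct dot-images of $0$, which has trivial stabilizer. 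A small check is needed when $ws_w$ falls outside $A_{sr}$, as happens for the root vertex $s_0$, but then $w'$ cannot share that term unless $w'=w$. This gives the bound in Case 1, exactly as in Corollary \ref{nog22}.

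\textbf{Case 2: one of $w,w'$ equals $w_b$.} If both equal $w_b$, part (3) of Proposition \ref{prop:bad weight homs} gives the bound $2$. If $w'$ is adjacent to $w_b$, then $w'=vt$ (see Figures \ref{fig:Xsr_Bn}, \ref{fig:Xsr_G2}), and part (2) gives $\le 1$. For non-adjacent $w'$, we compare $\mbox{ch}(T(w_b\cdot 0))$ from \eqref{bad char} with $\chi_{w'\cdot0}+\chi_{w's_{w'}\cdot 0}$. The only Weyl terms of $T(w_b\cdot 0)$ in $A_{sr}$ are $w_b$, $vt$ and $v$ (the term $vs$ is outside the cell). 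So a nonzero honest Hom can occur only for $w'\in\{v,\,vr\}$, or for those $w'$ with $w's_{w'}\in\{v,vt\}$. Among these, $w'=v$ is killed by part (1). Any other $w'$ whose character shares a term with \eqref{bad char} must be handled by a similar translation argument: translate through a wall to land in a tilting module outside $A_{sr}$, so that the morphism lies in $\cI_{e_{sr}}$. I expect that, by the combinatorics of Figures \ref{fig:Xsr_Bn}, \ref{fig:Xsr_G2}, only $w'=v$ is actually problematic, which Proposition \ref{prop:bad weight homs}(1) covers.

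\textbf{Main obstacle.} The delicate step is exactly this bookkeeping near $w_b$. We must ensure that every $w'\ne w_b$ not adjacent to $w_b$ either shares no Weyl factor with \eqref{bad char}, or is covered by part (1), including the reversed-direction Hom. The plan is to enumerate the neighbours $v$, $vt$, $vr$ explicitly from the figures and check each against \eqref{bad char}, invoking Proposition \ref{prop:bad weight homs} and its dual for the nontrivial ones.
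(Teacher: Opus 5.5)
\textbf{There is a gap in Case~1.} Your overall strategy matches the paper's. You bound $\Hom_\C$ by the honest $\Hom$ in $\T(\g,q)$, computed from Proposition~\ref{nog2}(2) via \ref{qgtitling}(5) as in Corollary~\ref{nog22}, and you use Proposition~\ref{prop:bad weight homs} for pairs involving $w_b$. Your duality remark for the reverse direction is fine, since $T(\lambda)^*\simeq T(\lambda)$ in these types.

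The problem is that $ws_w=w's_{w'}$ does not force $w=w'$. The trivial stabilizer of $0$ only says the two weights coincide iff $ws_w=w's_{w'}$ as elements; it says nothing about $w$ versus $w'$. Two distinct children of a branch vertex of $X_{sr}$ have the same predecessor.
\begin{itemize}
\item In type $G_2$, take $w=s_0s_1s_2s_1s_0$ and $w'=v=s_0s_1s_2s_1s_2$ (vertices $4$ and $5$ of Figure~\ref{fig:Xsr_G2_labeled}). Both have predecessor $u=s_0s_1s_2s_1$. Their characters $\chi_{w\cdot 0}+\chi_{u\cdot 0}$ and $\chi_{w'\cdot 0}+\chi_{u\cdot 0}$ share a term, so $\dim\Hom_{\T(\g,q)}(T(w\cdot 0),T(w'\cdot 0))=1$, although there is no edge $w-w'$.
\item In types $B_n,C_n$ the same happens for $s_0s_2s_1$ and $s_0s_2s_3$.
\item It also happens for $w_b=vts_1$ and its sibling $vts_0$ (the upper right vertex of Figure~\ref{fig:Xsr_Bn}). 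The character $\chi_{vts_0\cdot 0}+\chi_{vt\cdot 0}$ meets \eqref{bad char} in $\chi_{vt\cdot 0}$. So your expectation that only $w'=v$ is problematic near $w_b$ fails there. Separately, $vr$ on your list is spurious.
\end{itemize}
For sibling pairs no bound on honest Homs gives $0$, and Proposition~\ref{prop:bad weight homs} does not cover them. You must show these morphisms lie in $\cI_{e_{sr}}$.

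\textbf{How to close it.} The missing step is the translation argument you only gesture at.
\begin{itemize}
\item Let $x=us$ and $y=us'$ be siblings. Then $s$ and $s'$ are both neighbours of the last letter of $u$ in the Coxeter graph, which is a tree, so they commute.
\item Hence $ys=uss'$ has exactly the two reduced expressions $uss'$ and $us's$. So $ys$ lies outside $A_{sr}$, hence in a cell of $S_{A_{sr}}$. Yet $ys\cdot 0$ is dominant, since both expressions start with $s_0$.
\item Label the pair so that $y\ne w_b$. Soergel's algorithm gives $\mbox{ch}\,T(ys\cdot 0)=\chi_{ys\cdot 0}+\chi_{y\cdot 0}+\chi_{x\cdot 0}+\chi_{u\cdot 0}=\mbox{ch}\,\varTheta_sT(y\cdot 0)$. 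Therefore $\varTheta_sT(y\cdot 0)\simeq T(ys\cdot 0)$ lies in the thick ideal $\T(\g,q)_{A_{sr}}$.
\item $T(x\cdot 0)$ is a direct summand of $\varTheta_sT(u\cdot 0)$; for $x=w_b$ this is stated just before Proposition~\ref{prop:bad weight homs}.
\item By biadjointness of $\varTheta_s$, every morphism between $T(x\cdot 0)$ and $T(y\cdot 0)$, in either direction, factors through $\varTheta_s\varTheta_sT(y\cdot 0)$. It therefore lies in $\cI_{e_{sr}}$.
\end{itemize}
With this added your proof is complete.

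The paper's own proof is the same one-line combination of Corollary~\ref{nog22} and Proposition~\ref{prop:bad weight homs}, and it is silent on this point. Corollary~\ref{nog22}, read literally for honest Homs, already fails for sibling pairs.
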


Using translation functors, we can extend this to tilting modules $T(w\cdot \lambda)$ for all $\lambda$ in the interior of $C_e$. As before, using translation onto the walls, we get the analog of Corollary \ref{nog222} in this case.
\begin{corollary}
    \label{cor:on the  wall}
Assume that $A_{sr}$ is finite and assume that $\g$ is of type $G_2$, $B_n$, or $C_n$. Let $\lambda \in P_{A_{sr}}$
but $\lambda$ is not contained in an interior of an alcove (in other words, $\lambda$ is on the wall). Then the image of $T(\lambda)$ in $\mathcal C(\mathfrak g, e_{sr}, l, q)$ is simple and for any $\lambda' \in P_{A_{sr}}$ 
$$\dim \Hom_\C(T(\lambda), T(\lambda'))=\dim \Hom_\C(T(\lambda'),T(\lambda))=\left\{ \begin{array}{ccc} 1&\mbox{if}&\lambda=\lambda'\\ 0&\mbox{if}&\lambda \ne \lambda'. \end{array}\right.
$$
\end{corollary}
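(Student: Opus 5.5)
The plan is to mirror the argument behind Corollary \ref{nog222}, using translation functors onto the walls together with the upper bounds of Corollary \ref{cor:bad weight homs}, now taken modulo $\cI_{e_{sr}}$. Let $\lambda\in P_{A_{sr}}$ lie on a wall. Choose $\mu$ in the interior of $C_e$ and $w\in A_{sr}$ with $\lambda$ in the closure of the alcove $C_w$. Then $\lambda=T_\mu^{\lambda}(w\cdot\mu)$, where $T_\mu^\lambda$ is translation onto the facet containing $\lambda$ and $T_\lambda^\mu$ is its adjoint translation out of the wall. Since $\lambda\in P_{A_{sr}}$ and not in any $C_{w'}$ for $w'$ in a lower cell, we may take $w$ to be the element of $A_{sr}$ that is maximal in its coset for the stabilizer of the facet, so that \cite[7.3.2]{So1} gives $T(\lambda)=T_\mu^\lambda T(w\cdot\mu)$. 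Translation functors are given by tensoring with a tilting module followed by projection to a linkage block. They therefore preserve the tensor ideal $\cI_{e_{sr}}$, descend to $\C$, and stay biadjoint there.

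By adjunction,
$$\Hom_\C(T(\lambda),T(\lambda'))\simeq\Hom_\C(T(w\cdot\mu),T_\lambda^\mu T(\lambda')).$$
Hence the problem reduces to homomorphisms between tilting modules attached to regular weights, where Corollary \ref{cor:bad weight homs} (extended to $w\cdot\mu$ by translation) gives upper bounds. For $\lambda'\ne\lambda$ there are two cases. If $\lambda'$ is regular, then $T_\lambda^\mu T(w'\cdot\mu)$ is zero, since it lies in the wrong linkage block. If $\lambda'$ is singular, then $T_\lambda^\mu T(\lambda')$ has summands $T(x\cdot\mu)$ with $x$ running over the coset $w'W_\lambda$. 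Each such $x$ either lies outside $A_{sr}$, in which case the morphisms lie in $\cI_{e_{sr}}$, or is distinct from and not adjacent to $w$ in the sense needed, in which case the bound of Corollary \ref{cor:bad weight homs} is $0$. For $\lambda=\lambda'$, $T_\lambda^\mu T_\mu^\lambda T(w\cdot\mu)$ contains $T(w\cdot\mu)$ and $T(ws\cdot\mu)$, where $s$ is the wall reflection. Exactly one of $w,ws$ lies in $A_{sr}$ and is relevant modulo the ideal: the other is either in a lower cell or its image vanishes in $\C$. The bound then gives $\dim\End_\C(T(\lambda))\le 1$. Since $\Id_{T(\lambda)}\notin\cI_{e_{sr}}$ because $\lambda\in P_{A_{sr}}$, we get equality.

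From $\End_\C(T(\lambda))=\BC$ we conclude simplicity. $T(\lambda)$ is projective in $\C$, since the projectives of $\C$ are exactly $T(\nu)$ with $\nu\in P_{A_{sr}}$. If it had any composition factor $L$ other than its top, then $L$ has some projective cover $T(\nu)$ with $\nu\in P_{A_{sr}}$, and this produces a nonzero $\Hom_\C(T(\nu),T(\lambda))$. When $\nu\ne\lambda$ this is excluded by the vanishing just proved, in either direction. When $\nu=\lambda$ it contradicts $\dim\End=1$, because a second copy of the top in the composition series of a projective cover yields a non-identity nilpotent endomorphism. So $T(\lambda)$ equals its top and is simple. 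The vanishing of $\Hom_\C(T(\lambda'),T(\lambda))$ follows symmetrically, either by the same adjunction or by duality, since $T(\lambda)^*=T(-w_0\lambda)$ and duality preserves walls and $P_{A_{sr}}$.

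The main obstacle is the case analysis near the bad element $w_b$, i.e. checking that the coset elements appearing in $T_\lambda^\mu T_\mu^\lambda T(w\cdot\mu)$ contribute no extra morphisms modulo $\cI_{e_{sr}}$. The characters in \eqref{bad char} and the subsequent formulas show longer Weyl filtrations there. I would handle this directly using the explicit decompositions $\varTheta_rT(w_b\cdot 0)=T(vs\cdot 0)$ and $\varTheta_tT(w_b\cdot 0)=T(w_bt\cdot 0)\oplus T(vt\cdot 0)$, together with the fact that $vs$ and $w_bt$ lie outside $A_{sr}$, exactly as in the proof of Proposition \ref{prop:bad weight homs}.
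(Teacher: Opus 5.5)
\emph{There is a genuine gap.} Your outline is the route the paper intends: translate to and from the wall, use adjunction in $\T/\cI_{e_{sr}}$ to reduce to Corollary~\ref{cor:bad weight homs}, then get simplicity from $\dim\End_\C T(\lambda)=1$. But the translation-functor facts you feed into this outline are wrong, and the case analysis does not survive them.

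(i) For $w=w^+$ the longest element of its $W_\lambda$-coset, $T(\lambda)$ is not $T^\lambda_\mu T(w^+\cdot\mu)$. What \cite[7.3.2]{So1} gives is that translation \emph{out of} the wall is indecomposable, $T^\mu_\lambda T(\lambda)\cong T(w^+\cdot\mu)$. Hence $T^\lambda_\mu T(w^+\cdot\mu)\cong T^\lambda_\mu T^\mu_\lambda T(\lambda)\cong T(\lambda)^{\oplus|W_\lambda|}$. For example, for $G_2$, $l=7$, the module $T^{\omega_1}_0T(s_0\cdot 0)$ has character $2\chi_{\omega_1}$. The shortest representative does not work either. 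It can be $e\notin A_{sr}$ (as for $\omega_1$), and otherwise extra summands appear: $T^{-\omega_1}_0T(s_0s_1s_2\cdot 0)\cong T(3\omega_1+\omega_2)\oplus T(3\omega_1)$, both Weyl modules being simple.

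(ii) $T^\mu_\lambda T(\lambda')$ is the single module $T(w'^+\cdot\mu)$, not $\bigoplus_{x\in w'W_\lambda}T(x\cdot\mu)$; you are reading Weyl factors as tilting summands. Under your description the off-diagonal dichotomy is actually false. The weights $3\omega_1+\omega_2$ (on the wall $C_2|C_3$, your $w=3$) and $6\omega_1$ (on the wall $C_5|C_6$) lie in the same orbit, and $5\in\{5,6\}$ is adjacent to $3$ in $X_{sr}$.

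(iii) For $\lambda=\lambda'$, the claim ``exactly one of $w,ws$ lies in $A_{sr}$ and the other vanishes in $\C$'' fails in two places. For walls between two alcoves of $A_{sr}$ (e.g.\ $3\omega_1$ on $C_0|C_1$), both elements lie in $A_{sr}$. For the upper wall of $C_e$, the other element is $e$, and $T(\mu)$ is a nonzero object of $\C$. Your model would then give a bound of $3$, not $1$. The bad element $w_b$, which you single out as the main obstacle, is not the real difficulty: it is already absorbed in Corollary~\ref{cor:bad weight homs}.

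What is missing is the multiplicity $|W_\lambda|$ together with the unique-descent property of $A_{sr}$. Take $\lambda'$ in the orbit of $\lambda$; otherwise linkage already gives $0$. Let $w^+,w'^+$ be the longest coset representatives. Since $\lambda\in P_{A_{sr}}$ lies in the dominant alcove $C_{w^+}$ and $w^+\neq e$, we have $w^+\in A_{sr}$, and likewise $w'^+\in A_{sr}$. Adjunction, which descends to $\T/\cI_{e_{sr}}$, then gives
\[
|W_\lambda|\cdot\dim\Hom_\C(T(\lambda),T(\lambda'))=\dim\Hom_\C(T(w^+\cdot\mu),T(w'^+\cdot\mu)).
\]
An element of $A_{sr}$ has a unique reduced expression, hence a unique right descent. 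Every simple reflection of $W_\lambda$ is a descent of $w^+$, so $W_\lambda=\{1,s\}$, and $s$ is the unique descent of both $w^+$ and $w'^+$. Two distinct elements with the same unique descent cannot differ by right multiplication by a simple reflection. So for $\lambda\neq\lambda'$ they are not adjacent in $X_{sr}$, and Corollary~\ref{cor:bad weight homs} gives $0$. For $\lambda=\lambda'$ it gives $2\dim\End_\C T(\lambda)\le 2$, with equality because $\Id_{T(\lambda)}\notin\cI_{e_{sr}}$. Your final simplicity argument then goes through unchanged.
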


The unit object of the category $\T(\g,q)$ is $\be=T(0)$. Using the knowledge of the characters
of tilting modules from Proposition \ref{nog2} and  the computation for $\mbox{ch}(T(w_b\cdot 0))$,  we obtain

\begin{corollary}\label{nog2unit}
Assume that $A_{sr}$ is finite (and $\mathfrak g$ is of any type that permits it). Let $\lambda \in P_{A_{sr}}$.
Then 
$$\dim \Hom(\be, T(\lambda))=\dim \Hom(T(\lambda),\be)=\left\{ \begin{array}{ccc} 1&\mbox{if}&\lambda=s_0\cdot 0\\ 0&\mbox{if}&\lambda \ne s_0\cdot 0 \end{array}\right.
$$
where $s_0$ is the affine simple reflection of $\tilde W_l$.
\end{corollary}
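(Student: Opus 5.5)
We compute both Hom spaces in $\T(\g,q)$ via property \ref{qgtitling}(5): $\dim\Hom(\be,T(\lambda))$ equals the multiplicity of the trivial $\g$-module $\chi_0$ in $\mbox{ch}(T(\lambda))$. Duality and semisimplicity of $\g$-modules give the same number for $\dim\Hom(T(\lambda),\be)$. So the claim reduces to showing that $\chi_0$ occurs in $\mbox{ch}(T(\lambda))$ with multiplicity $1$ if $\lambda=s_0\cdot 0$ and $0$ otherwise, for $\lambda\in P_{A_{sr}}$.

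By the linkage principle \ref{qgtitling}(6), only $\lambda\in\tilde W_l\cdot 0$ can contain $\chi_0$. Since $0$ lies in the interior of $C_e$, its stabilizer is trivial. Hence every wall weight $\lambda\in P_{A_{sr}}$ (one not in the interior of an alcove) is not linked to $0$ and contributes nothing. We may therefore take $\lambda=w\cdot 0$ with $w\in A_{sr}$.

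In the generic case, Proposition \ref{nog2} gives $\mbox{ch}(T(w\cdot 0))=\chi_{w\cdot 0}+\chi_{ws\cdot 0}$ with $\ell(ws)=\ell(w)-1$. The weights $u\cdot 0$ for $u\in\tilde W_l^0$ are pairwise distinct, again by trivial stabilizer. So $\chi_0$ appears exactly when $w=e$ (excluded, since $e\notin A_{sr}$) or $ws=e$. Every element of $A_{sr}$ has a unique reduced expression beginning with $s_0$, so $ws=e$ forces $w=s_0$, and then the multiplicity is $1$.

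In types $B_n,C_n,G_2$ there is also the exceptional element $w_b$, handled by \eqref{bad char}. The terms there are $\chi_{w_b\cdot 0},\chi_{vs\cdot 0},\chi_{vt\cdot 0},\chi_{v\cdot 0}$ with $v=w_bst$. In the explicit expressions for $w_b$, $v$ has length $\ell(w_b)-2\ge 3$, so none of $w_b,vs,vt,v$ is the identity and $\chi_0$ does not occur. The only point needing care is this length bookkeeping for $w_b$, together with checking that the $u$ appearing are in $\tilde W_l^0$ so that $u\cdot 0$ is dominant and distinct from $0$. This follows from the listed reduced words.
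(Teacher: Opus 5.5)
Your proof is correct and follows the paper's own brief argument. You compute both Hom spaces as the multiplicity of $\chi_0$ in $\mbox{ch}(T(\lambda))$ via property (5) of Section \ref{qgtitling}, using Proposition \ref{nog2} for $w\in A_{sr}$ and formula \eqref{bad char} for $w_b$; you also make explicit two steps the paper leaves implicit, namely the linkage-principle reduction to $\lambda=w\cdot 0$ with $w\in A_{sr}$ and the length check showing that none of $w_b, v, vs, vt$ is the identity.
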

 
\subsection{Principal block of $\C(\g,e_{sr},l,q)$}
Recall (see Section \ref{t ideal}) the set of weights $P_A$ attached to a canonical cell $A$.
The indecomposable projective objects of the category $\C(\g,e_{sr},l,q)$ are $T(\lambda)$ where
$\lambda \in P_{A_{sr}}$. The linkage principle (see \ref{qgtitling} (6)) immediately implies that
the category $\C(\g,e_{sr},l,q)$ decomposes into summands which correspond to intersections
of $\tilde W_l-$orbits with the set $P_{A_{sr}}$ (these summands can be decomposed even more).

\begin{lemma}\label{projsr} 
(1) The projective cover of the unit object in $\C(\g,e_{sr},l,q)$ is
$T(s_0\cdot 0)$ (recall that $s_0$ is the affine simple reflection in $\tilde W_l$). 

(2) The category $\C(\g,e_{sr},l,q)$ is unimodular (see \cite[Definition 6.5.7]{EGNO}).

(3) The projective objects of the principal block of $\C(\g,e_{sr},l,q)$ are among \\
$T(w\cdot 0), w\in A_{sr}$.
\end{lemma}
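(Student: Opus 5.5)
We treat the three parts in order; each uses only the list of projectives $T(\lambda)$, $\lambda\in P_{A_{sr}}$, and the Hom computations of the previous subsection.

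\textbf{Part (1).} The unit object $\be=T(0)$ of $\C=\C(\g,e_{sr},l,q)$ has some indecomposable projective cover $T(\lambda)$ with $\lambda\in P_{A_{sr}}$, and it receives a nonzero (surjective) map $T(\lambda)\to\be$ in $\C$. Since $\Hom_\C$ is a quotient of $\Hom_{\T(\g,q)}$, we get $\Hom_{\T(\g,q)}(T(\lambda),\be)\neq 0$. By Corollary \ref{nog2unit} this forces $\lambda=s_0\cdot 0$, which proves (1). We still need $s_0\cdot 0\in P_{A_{sr}}$. This holds because $s_0\in A_{sr}$: it has a unique reduced expression and starts with $s_0$. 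Also $s_0\cdot 0$ lies in the interior of $C_{s_0}$, hence in no other alcove, since $0$ is interior to $C_e$ by our standing assumption on $l$.

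\textbf{Part (2).} Unimodularity means the socle of $P(\be)$ is $\be$. We use the self-duality of tilting modules: $T(\lambda)^*\cong T(-w_0\lambda)$. Hence $T(s_0\cdot 0)^*$ is again an indecomposable projective in the principal block, and $\Hom(T(s_0\cdot 0)^*,\be)\cong\Hom(\be,T(s_0\cdot 0))\neq0$. So $T(s_0\cdot 0)^*$ is also a projective cover of $\be$, giving $T(s_0\cdot 0)^*\cong T(s_0\cdot 0)$. In a finite tensor category $P(\be)^*$ is the injective hull of $\be$, so $P(\be)$ is the injective hull of $\be$ and its socle is $\be$. Thus the distinguished invertible object is trivial, which is unimodularity.

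Alternatively, Corollary \ref{nog2unit} gives $\dim\Hom_\C(\be,T(s_0\cdot 0))\le 1$ together with nonvanishing. Nonvanishing holds because the socle of $P(\be)$ is simple (\cite[Remark 6.1.5]{EGNO}), and the only simple admitting a map to it from a tilting summand lying in $\Hom(\be,-)$ is $\be$. We will use the first argument; the second is a cross-check.

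\textbf{Part (3).} The principal block consists of the summands linked to $\be$. By the linkage principle \ref{qgtitling}(6), the indecomposable projectives in it are $T(\lambda)$ with $\lambda\in P_{A_{sr}}\cap \tilde W_l\cdot 0$. Since $0$ has trivial stabilizer and lies in the interior of $C_e$, every $\lambda\in\tilde W_l\cdot 0$ has the form $w\cdot 0$ for a unique $w$, with $\lambda$ interior to $C_w$. Membership $\lambda\in P_{A_{sr}}$ requires $\lambda\in C_{w'}$ for some $w'\in A_{sr}$. Interior points lie in only one alcove, so $w'=w$, hence $w\in A_{sr}$.

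The main subtle point is this uniqueness of the alcove for interior weights, together with the dominance of $w\cdot 0$ for $w\in A_{sr}\subset\tilde W^0_l$. Both are standard facts about alcove geometry.
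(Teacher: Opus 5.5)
Your proposal is correct. For parts (1) and (3) it follows the paper's proof: (1) is read off from Corollary \ref{nog2unit}, and (3) is (1) plus linkage. Your alcove-uniqueness step in (3) makes explicit what the paper leaves implicit. One wording point: linkage only shows that the principal-block projectives are \emph{among} the $T(w\cdot 0)$, $w\in A_{sr}$. That all of them occur is Proposition \ref{Cartansr}. The lemma only claims the inclusion, so this is harmless, but your phrase ``the indecomposable projectives in it are'' overstates it.

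For (2) you take a different route.

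\textbf{The paper's route.} The injective hull of $\be$ is an indecomposable projective $T(\lambda)$, $\lambda\in P_{A_{sr}}$, with $\Hom_\C(\be,T(\lambda))\neq 0$. Hence $\Hom(\be,T(\lambda))\neq 0$ in $\T(\g,q)$, and the vanishing of $\Hom(\be,T(\lambda))$ for $\lambda\neq s_0\cdot 0$ in Corollary \ref{nog2unit} forces $\lambda=s_0\cdot 0$. So $P(\be)$ is also the injective hull of $\be$.

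\textbf{Your route.} You show $P(\be)^*\cong P(\be)$ and use that duality exchanges projective covers and injective hulls. This is valid, but the step ``$\Hom(\be,T(s_0\cdot 0))\neq 0$, so $T(s_0\cdot 0)^*$ is a projective cover of $\be$'' is imprecise. Corollary \ref{nog2unit} computes Hom spaces in $\T(\g,q)$. Given (1), nonvanishing of $\Hom_\C(\be,T(s_0\cdot 0))$ is equivalent to the very statement you are proving.

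The repair is one line. Write $T(s_0\cdot 0)^*=T(\mu)$; it is an indecomposable projective, as you note, so $\mu\in P_{A_{sr}}$. Since $\Hom_{\T(\g,q)}(T(\mu),\be)\neq 0$, Corollary \ref{nog2unit} gives $\mu=s_0\cdot 0$, exactly as in your part (1). Alternatively, $s_0\cdot 0$ is a multiple of $\beta_0$ (resp.\ $\beta_1$), hence fixed by $-w_0$.

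\textbf{Comparison.} The paper's version is shorter and needs no information about duals. Yours isolates the general criterion that a finite tensor category is unimodular iff $P(\be)^*\cong P(\be)$.

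Your ``alternative'' cross-check does not work as phrased. Knowing that the socle of $P(\be)$ is simple does not identify it as $\be$. Its correct form is precisely the paper's argument above.
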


\begin{proof} (1) is clear from Corollary \ref{nog2unit}; we also see that the socle of the projective cover of $\be$ is $\be$. This means that the distinguished invertible object (see
\cite[Definition 6.4.4]{EGNO}) of the category $\C(\g,e_{sr},l,q)$ is $\be$, that is this
category is unimodular and we get (2). Finally, (3) is a consequence of (1) and the 
linkage principle \ref{qgtitling} (6).
\end{proof}

\begin{lemma}\label{projubf}
    Let $\C$ be a unimodular braided finite tensor category. Then

    (1) The head and the socle of any indecomposable projective in $\C$
are isomorphic.

(2) For projective objects $P,P'\in \C$ we have
$$\dim \Hom(P,P')=\dim \Hom(P',P).$$

(3) Let $P$ be an indecomposable projective object such that $\dim \Hom(P,P)=1$.
Then $P$ is simple and it does not appear as a subquotient of any other indecomposable
projective.
\end{lemma}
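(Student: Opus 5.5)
Part (1) comes from the Nakayama-type description of projectives in a finite tensor category. In any finite tensor category the projective cover $P(X)$ of a simple $X$ has socle $D\otimes X$ up to double-dual twisting; more precisely, by \cite[6.4]{EGNO} $\mathrm{soc}\,P(X)\cong D\otimes X^{**}$, where $D$ is the distinguished invertible object. Unimodularity gives $D=\be$, so $\mathrm{soc}\,P(X)\cong X^{**}$. The braiding supplies the Drinfeld isomorphism $X\cong X^{**}$, which is natural and monoidal only after composing with a twist, but it is certainly an isomorphism of objects. Hence $\mathrm{soc}\,P(X)\cong X=\mathrm{head}\,P(X)$. This is the step where braided and unimodular are both used, and it needs care with the EGNO conventions ($X^{**}$ versus ${}^{**}X$, left versus right duals). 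Since all double duals are isomorphic to $X$ here, the convention issue is harmless.

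For (2) I will reduce to indecomposables by additivity. Take $P=P(X)$ and $P'=P(Y)$. Projectivity gives $\dim\Hom(P(X),M)=[M:X]$ for every $M$. Injectivity of $P(Y)$ (finite tensor categories are Frobenius, so projectives are injective) together with (1), which says that $P(Y)$ is the injective hull of $Y$, gives $\dim\Hom(M,P(Y))=[M:Y]$. Therefore
$$\dim\Hom(P(X),P(Y))=[P(Y):X],\qquad \dim\Hom(P(Y),P(X))=[P(Y):Y\text{-hull of }X]=[P(Y):X].$$
The second equality uses the injective-hull description of $P(X)$. More precisely, $\dim\Hom(P(Y),P(X))$ is computed either as $[P(X):Y]$ (by projectivity of $P(Y)$) or as $[P(Y):X]$ (by injectivity of $P(X)$, since $P(X)$ is the injective hull of $X$ by (1)). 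Both readings are valid, and comparing them gives $[P(X):Y]=[P(Y):X]$. Hence $\dim\Hom(P,P')=\dim\Hom(P',P)$.

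For (3), let $P=P(X)$ with $\dim\End(P)=1$. Then $[P:X]=1$. But $X$ occurs in the head of $P$ and, by (1), also in its socle. If $P\neq X$, the head and socle are different composition factors, which forces $[P:X]\ge 2$, a contradiction. So $P=X$ is simple.

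For any other indecomposable projective $P(Y)$ with $Y\not\cong X$, the symmetry of the Cartan matrix from (2) gives
$$[P(Y):X]=\dim\Hom(P(X),P(Y))=\dim\Hom(P(Y),P(X))=[P(X):Y]=[X:Y]=0.$$
Hence $X$ is not a subquotient of $P(Y)$.

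The main obstacle is (1): correctly citing the socle formula for projective covers and identifying the needed double dual with $X$ via the braiding. After that, (2) and (3) are formal consequences of the Frobenius property.
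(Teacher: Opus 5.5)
Your proof is correct. Part (1), and the deduction in (3) that $P$ is simple, follow the paper exactly: the socle of $P(X)$ is $X^{**}$ twisted by the distinguished invertible object, which is trivial by unimodularity, and $X^{**}\cong X$ via the braiding.

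For (2) you take a different route. The paper uses rigidity. It writes $\Hom(P,P')\cong\Hom(\be,P'\otimes P^*)$ and $\Hom(P',P)\cong\Hom(\be,(P'\otimes P^*)^*)$, the latter using ${}^*P\cong P^*$. It then observes that for a projective $Q$, both $\dim\Hom(\be,Q)$ and $\dim\Hom(Q,\be)$ count the summands $P(\be)$ of $Q$; this uses $\mathrm{soc}\,P(\be)\cong\be$.

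You use only (1) and the Frobenius property: $P(Y)$ is the injective hull of $Y$. Hence $\dim\Hom(P(Y),P(X))=[P(X):Y]=\dim\Hom(P(X),P(Y))$, computing $[P(X):Y]$ once by projectivity and once by injectivity of $P(Y)$. This is the standard fact that a weakly symmetric self-injective algebra has a symmetric Cartan matrix. State it as that one-line chain: your displayed formula with ``$Y$-hull of $X$'' is garbled, although the sentence after it is right.

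Your route shows that (2) is a purely abelian consequence of (1), with no further use of duality. The paper's route gives a tensor-categorical meaning to the Cartan entries as multiplicities of $P(\be)$ in $P'\otimes P^*$.

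For the last assertion of (3) you again use Cartan symmetry, $[P(Y):X]=[P(X):Y]=[X:Y]=0$. The paper instead notes that a simple object which is both projective and injective splits off from any object having it as a subquotient. That argument is more direct and gives the stronger conclusion that $X$ is not a subquotient of any indecomposable object other than itself. Yours gives exactly what is stated, which is all that is needed later.
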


\begin{proof}
    For any braided finite tensor category we have $X\simeq X^{**}\simeq \phantom{}^{**}X$ 
    for any object $X$, see \cite[Proposition 8.10.6]{EGNO}. Also in any finite tensor category
    the socle of the projective cover of a simple object $X$ is $X^{**}\otimes X_\rho^*$
    where $X_\rho^*$ is the socle of the projective cover of $\be$, see 
\cite[6.4]{EGNO}. This proves (1).

For (2) we have $\Hom(P,P')=\Hom(\be, P'\otimes P^*)$ and $$\Hom(P',P)=\Hom(\be, P\otimes (P')^*)=
\Hom(\be, (P'\otimes \phantom{}^*P)^*)=\Hom(\be, (P'\otimes P^*)^*)$$ where we use isomorphism $\phantom{}^*P\simeq (\phantom{}^*P)^{**}\simeq P^*$ from \cite[Proposition 8.10.6]{EGNO}.
Thus both $\dim \Hom(P,P')$ and $\dim \Hom(P',P)$ equal the number of projective covers of $\be$
appearing as a direct summand of projective object $P'\otimes P^*$.

Let $P$ be as in (3). Then it follows from (1) that the head and the socle of $P$ coincide,
so $P$ is simple. Since $P$ is both projective and injective it can't be a subquotient
of an indecomposable module except for itself, so (3) follows.
\end{proof}

\begin{corollary} \label{cor:self-dual}
    Let $\mathfrak g$ be of type $B_n, C_n, F_4, G_2, D_{2n}, E_7$, or $E_8$.
    Then all simple objects of $\C=\C(\mathfrak g, e_{sr}, l, q)$ are self-dual. 
\end{corollary}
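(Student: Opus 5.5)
The simple objects of $\C$ are the heads of the indecomposable projectives $T(\lambda)$, $\lambda\in P_{A_{sr}}$. Since duality sends the projective cover of a simple $L$ to the injective hull of $L^*$, it is enough to show that every tilting module in the relevant range is self-dual, and then pass to heads. The plan is therefore to prove $T(\lambda)^*\simeq T(\lambda)$ for $\lambda\in P_{A_{sr}}$.

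First I will use the duality on the quantum group side. In $\T(\g,q)$ we have $T(\lambda)^*\simeq T(-w_0\lambda)$, where $w_0$ is the longest element of $W$: the dual of a tilting module is tilting, and its highest weight is $-w_0\lambda$. For the types listed ($B_n$, $C_n$, $F_4$, $G_2$, $D_{2n}$, $E_7$, $E_8$), $w_0=-1$, so $-w_0\lambda=\lambda$ and $T(\lambda)^*\simeq T(\lambda)$ already in $\T(\g,q)$. The quotient functor $\T(\g,q)\to\T(\g,q)/\cI_e$ and the embedding into the abelian envelope $\C$ are monoidal, so they preserve duals. Hence every indecomposable projective $P=T(\lambda)$ of $\C$ satisfies $P^*\simeq P$.

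Next I pass to simple objects. Let $L$ be simple with projective cover $P$, so $L$ is the head of $P$. Dualizing the surjection $P\to L$ gives an injection $L^*\to P^*\simeq P$. Since $P$ is indecomposable projective, and therefore injective with simple socle, $L^*$ must be the socle of $P$. By Lemma \ref{projsr}(2) the category $\C$ is unimodular. It is also braided, since the braiding on $\T(\g,q)$ descends to the envelope, which is the universal property of the abelian envelope in \cite{CEO}. So Lemma \ref{projubf}(1) applies and says that the socle of $P$ is isomorphic to its head $L$. Therefore $L^*\simeq L$.

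The main point needing care is whether the braiding really passes to $\C$. If that is problematic, I would instead use the description of the socle of the projective cover of $X$ as $X^{**}\otimes X_\rho^*$ with $X_\rho=\be$, which holds in any finite tensor category. Together with $X^{**}\simeq X$, which follows from the pivotal (ribbon) structure inherited from $\T(\g,q)$, this gives socle $\simeq$ head directly. Everything else reduces to the standard fact that $w_0=-1$ exactly in these types.
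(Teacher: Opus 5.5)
Your proposal is correct and is essentially the paper's argument. You get self-duality of $T(\lambda)$ from $w_0=-1$, which the paper phrases as self-duality of all simple $\g$-modules; then $P\simeq P^*$ is the injective hull of $L^*$, and Lemma \ref{projubf}(1) identifies the socle of $P$ with its head $L$, just as in the paper (which likewise assumes $\C$ is braided without comment, so your remark about the braiding descending to the envelope only makes explicit what the paper leaves implicit).
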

\begin{proof}
    It is well-known that in this case all simple representations of $\mathfrak g$ are self-dual. It follows from properties (1) and (2) in Section \ref{qgtitling} that the tilting modules $T(\lambda)$ are also self-dual. 

    If $L$ is a simple object in $\C$ and $P$ is its projective cover, then $P=P^*=T(\lambda)$ for some $\lambda$, and so $P$ is the injective hull of $L^*$. Lemma \ref{projubf} then implies that $L$ is the socle of $P$, and so $L= L^*$. 
\end{proof}

\begin{proposition}\label{Cartansr}
   The projective objects of the principal block of the category $\C(\g,e_{sr},l,q)$ are 
   $T(w\cdot 0), w\in A_{sr}$. The Cartan matrix of the principal block is
   $2\Id+A(X_{sr})$ where $A(X_{sr})$ is the adjacency matrix of the graph $X_{sr}$.
\end{proposition}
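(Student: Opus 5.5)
Using Lemma \ref{projsr}(3), I will argue in two steps. First, all $T(w\cdot 0)$, $w\in A_{sr}$, lie in the principal block; second, their Hom dimensions in $\C=\C(\g,e_{sr},l,q)$ are exactly the upper bounds of Corollaries \ref{nog22} and \ref{cor:bad weight homs}. The Cartan matrix of a block is the matrix $\dim\Hom_\C(P_i,P_j)$ between its indecomposable projectives, so these two facts together give the statement.

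\emph{Equality of Hom dimensions.} In the simply laced and $F_4$ cases, Corollary \ref{nog22} gives dimensions in $\T(\g,q)$, so it yields only upper bounds for $\Hom_\C$. For the lower bounds, note that each $T(w\cdot 0)$, $w\in A_{sr}$, is an indecomposable projective of $\C$ that is not simple. Indeed, in $\T(\g,q)$ its endomorphism ring is $2$-dimensional with a nonzero nilpotent. If the image of $T(w\cdot0)$ in $\C$ had one-dimensional endomorphisms, Lemma \ref{projubf}(3) (which applies by Lemma \ref{projsr}(2)) would make it simple and disconnected from everything else. I will exclude this by propagating along the tree $X_{sr}$, starting from $s_0$.
\begin{itemize}
\item For $w=s_0$, the object $T(s_0\cdot 0)$ is the projective cover of $\be$ with socle $\be$ (Lemma \ref{projsr}(1)). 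It is not simple, since $T(s_0\cdot 0)\not\cong\be$. Hence $\Hom_\C(T(s_0\cdot0),T(s_0\cdot0))$ contains the identity and the map head$\to$socle, so it has dimension $\ge 2$, and therefore exactly $2$.
\item For the edges, I would use translation functors, which are exact and biadjoint and preserve the ideal $\cI_{e_{sr}}$ (the ideal is tensor and translation is tensoring with a tilting module followed by projection to a block). A nonzero morphism $T(w\cdot0)\to T(ws\cdot 0)$ not lying in $\cI_{e_{sr}}$ can be produced as the adjunction unit or counit for $\varTheta_s$, since $T(ws\cdot0)$ is a summand of $\varTheta_s T(w\cdot 0)$.
\item The unit $T(w\cdot 0)\to \varTheta_sT(w\cdot 0)$ is nonzero in $\C$: composing with the counit gives a nonzero multiple of the identity or of the nilpotent endomorphism, both of which survive in $\C$ by the previous step.
\end{itemize}
Propagating outward from $s_0$ along the tree gives all off-diagonal entries equal to $1$ for edges and $2$ on the diagonal. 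The same argument, together with the bounds of Corollary \ref{cor:bad weight homs}, treats the types $B_n,C_n,G_2$, including the vertex $w_b$, where translation $\varTheta_s T(w_b\cdot0)=T(w_b\cdot0)^{\oplus2}$ is used.

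\emph{Block membership.} Once every edge of $X_{sr}$ is known to carry a nonzero Hom in $\C$, connectedness of the graph $X_{sr}$ (an affine Dynkin diagram) shows that all $T(w\cdot 0)$ lie in one block. That block contains $T(s_0\cdot0)$, the projective cover of $\be$, so it is the principal block.

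\emph{Main obstacle.} I expect the hardest step to be the lower bounds, i.e.\ showing that the relevant morphisms do not fall into $\cI_{e_{sr}}$. The difficulty is that $\cI_{e_{sr}}$ is defined only as a maximal ideal and not by an explicit description. My first attempt would use the criterion that a morphism $f:P\to P'$ between non-ideal indecomposables survives if some composite $P\to P'\to P$ is a non-nilpotent or socle-hitting endomorphism. I would combine this with the socle property of projectives in a finite tensor category, which is the argument of Proposition \ref{tree}: if the nonzero morphisms along an edge died in $\C$, the projective $T(w\cdot0)$ would decompose or acquire a non-simple socle.
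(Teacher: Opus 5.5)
\textbf{There is a genuine gap.} It is exactly the step you flag as the main obstacle: nothing in your argument produces a morphism along an edge of $X_{sr}$ that survives modulo $\cI_{e_{sr}}$. Your diagonal entries and your block-membership claim are both obtained by propagating along edges, so all of the lower bounds remain unproved.

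Concretely, the claim that the unit--counit composite $T(w\cdot0)\to\varTheta_sT(w\cdot0)\to T(w\cdot0)$ is, modulo $\cI_{e_{sr}}$, a nonzero multiple of the identity or of the nilpotent endomorphism is not justified.
\begin{itemize}
\item It cannot be a multiple of the identity. For $ws>w$ the module $T(w\cdot0)$ is not a direct summand of $\varTheta_sT(w\cdot0)$.
\item That it is a nonzero multiple of the nilpotent endomorphism modulo the inexplicitly defined maximal ideal is a real claim about $\cI_{e_{sr}}$. When the other summands of $\varTheta_sT(w\cdot0)$ lie in the ideal, it already implies that both morphisms $T(w\cdot0)\to T(ws\cdot0)$ and $T(ws\cdot0)\to T(w\cdot0)$ survive. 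So it is essentially the statement you are trying to prove.
\item Even granting that the unit $\eta$ is not in $\cI_{e_{sr}}$, you cannot extract its component into $T(ws\cdot0)$. In type $G_2$ with $w=s_0s_1s_2$ and $s=s_1$, the characters give $\varTheta_{s_1}T(w\cdot0)\cong T(s_0s_1s_2s_1\cdot0)\oplus T(s_0s_1\cdot0)$. Both summands are indexed by $A_{sr}$, so $\eta$ might survive only through the parent vertex. The two contributions to the counit composite could even cancel.
\item The socle fallback does not repair this. For a proper subgraph $Y\subset X_{sr}$, the matrix $2\Id+A(Y)$ is the Cartan matrix of $\Rep(\tilde Y)$, whose indecomposable projectives all have simple socles. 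Deleting edges or vertices therefore creates no local contradiction.
\end{itemize}

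\textbf{The missing idea is a global one, and it makes explicit morphisms unnecessary.} The upper bounds of Corollaries \ref{nog22} and \ref{cor:bad weight homs} already constrain the Cartan matrix of the principal block, together with two facts from Lemma \ref{projubf}:
\begin{itemize}
\item part (2) gives symmetry;
\item part (3) says a projective with one-dimensional endomorphism algebra is a trivial block by itself, so it cannot lie in the principal block.
\end{itemize}
Together these show the Cartan matrix of the principal block is $2\Id+A(Y)$ for some subgraph $Y\subseteq X_{sr}$. The vertices of $Y$ are the $w$ with $T(w\cdot0)$ in the principal block, and $Y$ possibly has fewer edges than $X_{sr}$.

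The argument of \cite[Theorem 6.6.1]{EGNO} shows this matrix is degenerate. For instance, projectives in the non-semisimple pivotal category $\C$ have dimension $0$ while $\dim\be=1$, so the dimension vector of the simples of the block lies in the kernel. On the other hand, $2\Id+A(Y)$ is nondegenerate for every proper subgraph $Y$ of the affine Dynkin diagram $X_{sr}$. Hence $Y=X_{sr}$. This gives at once that every $T(w\cdot0)$ lies in the principal block, every diagonal entry is $2$, and every edge entry is $1$. This is the paper's proof.
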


\begin{proof}
By Lemma \ref{projsr} (3) the projective objects of the the principal block of the category $\C(\g,e_{sr},l,q)$ are some of $T(w\cdot 0), w\in A_{sr}$. The entries of the Cartan
matrix of this block are dimensions of $\Hom(T(w\cdot 0), T(w'\cdot 0))$ modulo the ideal $\cI_{e_{sr}}$. By Corollaries \ref{nog22} and \ref{cor:bad weight homs}  the diagonal entries are $\le 2$ and the off-diagonal entries are $\le 1$. By  Lemma \ref{projubf} (3) we have no diagonal entries equal to 1, and by 
Lemma \ref{projubf} (2) the Cartan matrix is symmetric. It follows that the Cartan matrix
is $2\Id+A(\tilde X_{sr})$ where $A(\tilde X_{sr})$ is the adjacency matrix of some subgraph
$\tilde X_{sr}$ of $X_{sr}$. Using the same argument as in the proof of \cite[Theorem 6.6.1]{EGNO}
one shows that the Cartan matrix of the principal block must be degenerate. It is well known
that $2\Id+A(\tilde X_{sr})$ is non-degenerate for any proper subgraph of the affine Dynkin
diagram $X_{sr}$. Hence $\tilde X_{sr}=X_{sr}$. 
\end{proof}

\subsection{Proof of Theorem \ref{main blocks}} By Proposition \ref{Cartansr} the Cartan
matrix of the principal block of $\C(\g,e_{sr},l,q)$ is $2\Id+A(X_{sr})$ where $X_{sr}$ is
some affine Dynkin diagram not of type $A$, so it is a tree. Thus, by Proposition \ref{tree}
(applied to this block and to the category $\Rep(\Gamma \ltimes V)$)
we see that the principal block is equivalent to $\Rep(\Gamma \ltimes V)$ where 
$V$ is as in \ref{toy} and $\Gamma \subset SL(V)$ is a finite subgroup with McKay graph $X_{sr}$.
Using the translation functors (which descend to the category $\T(\g,q)/\cI_{e_{sr}}$) we see
that all the other blocks of $\C(\g,e_{sr},l,q)$ involving $T(w\cdot \lambda)$ where $\lambda$
is in the interior of the fundamental alcove are equivalent to the principal block, and hence
to the category $\Rep(\Gamma \ltimes V)$. The other blocks involving $T(\lambda)$ with $\lambda$
on the wall are trivial by Corollaries \ref{nog222} and \ref{cor:on the  wall}. The remaining statements of 
Theorem \ref{main blocks} follow from Proposition \ref{3.1}.

\begin{remark}
    Let $\tilde \cI_{e_{sr}}$ be the tensor ideal of $\T(\g,q)$ generated by
    $\Id_T$ where $T\in \T(\g,q)_{A_{e_{sr}}}$ (see Section \ref{t ideal}). The calculations
    in the proof of Theorem \ref{main blocks} show that the category $\T(\g,q)/\tilde \cI_{e_{sr}}$ contains exactly two indecomposable objects $T$ with $\Hom(\be,T)\ne 0$, namely $T(0)=\be$ and $T(s_0\cdot 0)$ and$\Hom(\be,T)$ is one dimensional in both of these cases. It follows that the functor
    $\Hom(\be,?)$ has exactly one proper non-trivial subfunctor generated
    by space $\Hom(\be,T(s_0\cdot 0))$. By a theorem of Coulembier 
    \cite[Theorem 3.1.1]{Co} this implies that the category $\T(\g,q)/\tilde \cI_{e_{sr}}$ has exactly one proper non-trivial tensor ideal which must
    coincide with the ideal of negligible morphisms. We deduce that $\cI_{e_{sr}}=\tilde \cI_{e_{sr}}$ for all cases when $e_{sr}$ is distinguished
    nilpotent element.
\end{remark}

\section{Category $\C(G_2,G_2(a_1),7,q)$}
Let $l = 7$, let $q$ be a root of unity of order $7$ or $14$, and let $\mathfrak g$ be of type $G_2$. Recall that the  Coxeter number $h=\langle \rho, \beta_0\rangle+1$ for $\mathfrak g$ is equal to $6$.  It is easy to see that in this case, the fundamental alcove $C_l(\mathfrak g)$ contains exactly one integral dominant weight (zero) in its interior. 

Let us number the vertices of $X_{sr}$ as shown in Figure \ref{fig:Xsr_G2_labeled}. Recall that each vertex of $X_{sr}$ corresponds to an element $w\in A_{sr}$. For each $i=0,1,\ldots, 7$, if $w$ is the element corresponding to vertex $i$, we will refer to the alcove $C_w$ as $C_i$.

\begin{figure}[htbp]
  \centering
  \begin{tikzpicture}[scale=1.2, every node/.style={font=\small}]

\draw[black, thick] (0,0) -- (6,0);
\draw[black, thick] (3,0) -- (3,-1);

\foreach \x/\y in {
    1/0,
    2/0,
    3/0,
    3/-1,
    4/0,
    5/0
}{
    \node[draw, circle, fill=white, inner sep=1.5pt] at (\x,\y) {};
}

\node[draw, circle, fill=white, inner sep=4.5pt] at (0,0) {};
\node at (0,0) {$0$};
\node[draw, circle, fill=white, inner sep=4.5pt] at (1,0) {};
\node at (1,0) {$1$};
\node[draw, circle, fill=white, inner sep=4.5pt] at (2,0) {};
\node at (2,0) {$2$};
\node[draw, circle, fill=white, inner sep=4.5pt] at (3,0) {};
\node at (3,0) {$3$};
\node[draw, circle, fill=white, inner sep=4.5pt] at (3,-1) {};
\node at (3,-1) {$4$};
\node[draw, circle, fill=white, inner sep=4.5pt] at (4,0) {};
\node at (4,0) {$5$};
\node[draw, circle, fill=white, inner sep=4.5pt] at (5,0) {};
\node at (5,0) {$6$};

\node at (0.5,0.2) {$s_1$};
\node at (1.5,0.2) {$s_2$};
\node at (2.5,0.2) {$s_1$};
\node at (3.5,0.2) {$s_2$};
\node at (4.5,0.2) {$s_1$};
\node at (5.5,0.2) {$s_0$};

\node at (3.2,-0.5) {$s_0$};

\node[draw, circle, fill=white, inner sep=4.5pt] at (6,0) {};

\node at (6,0) {$7$};

\end{tikzpicture}
  \caption{Graph $X_{sr}$ for type $G_2$ (labeled)}
  \label{fig:Xsr_G2_labeled}
\end{figure}
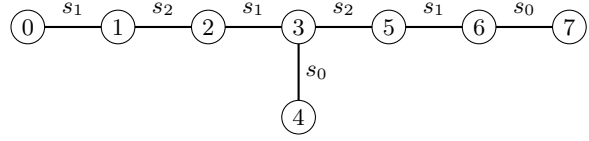

The union of alcoves $C_w$ for $w$ in $A_{sr}$ is shown in Figure \ref{fig:subreg_cell_G2} in red.

\begin{figure}[htbp]
  \centering
  \begin{tikzpicture}[scale=2.5]

\fill[red!20] (0.866,0) -- (0.866,0.5) -- (2.598,1.5) -- (1.732,0) -- cycle;

\fill[red!20] (1.732,0) -- (1.732,1) -- (3.464,0) -- cycle;

\draw[black] (0,0) -- (0.866,0); 
\draw[black] (0.866,0) -- (1.732,0); 
\draw[black] (1.732,0) -- (3.464,0); 
\draw[black] (0,0) -- (0.866,0.5); 
\draw[black] (0.866,0.5) -- (2.598,1.5); 
\draw[black] (0.866,0) -- (0.866,0.5); 
\draw[black] (0.866,0.5) -- (1.732,0); 
\draw[black] (1.732,0) -- (1.299,0.75); 
\draw[black] (1.732,0) -- (1.732,1); 
\draw[black] (1.732,0) -- (2.598,1.5); 
\draw[black] (1.732,1) -- (3.464,0); 
\draw[black] (1.732,0) -- (2.598,0.5); 
\draw[black] (2.598,0) -- (2.598,0.5); 

\node at (1.15, 0.15) {$C_0$};
\node at (1.3, 0.4) {$C_1$};
\node at (1.58, 0.62) {$C_2$};
\node at (1.9, 0.62) {$C_3$};
\node at (2.1, 1) {$C_4$};
\node at (2.2, 0.45) {$C_5$};
\node at (2.35, 0.15) {$C_6$};
\node at (2.85, 0.15) {$C_7$};

\end{tikzpicture}
  \caption{Subregular cell for type $G_2$}
  \label{fig:subreg_cell_G2}
\end{figure}
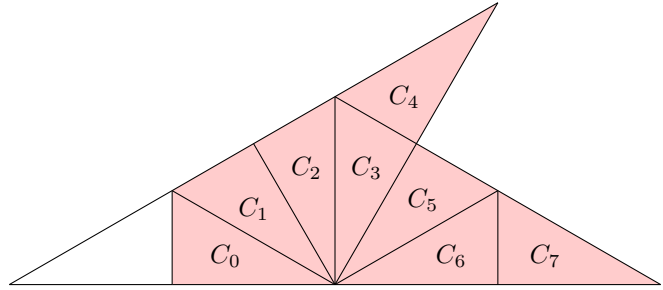

\begin{figure}[htbp]
  \centering
  \begin{tikzpicture}[scale=3]

\fill[green!20] (0.618,0.214) -- (3.088,0.214) -- (2.473,1.284) -- cycle;

\node[draw, circle, fill=white, inner sep=1pt] at (0.618,0.214) {};

\node[draw, circle, fill=black, inner sep=0.8pt] at (0.865, 0.214) {};
\node[draw, circle, fill=black, inner sep=0.8pt] at (0.989, 0.428) {};
\node[draw, circle, fill=red, inner sep=0.8pt] at (1.112, 0.214) {};
\node[draw, circle, fill=black, inner sep=0.8pt] at (1.359, 0.214) {};
\node[draw, circle, fill=black, inner sep=0.8pt] at (1.606, 0.214) {};
\node[draw, circle, fill=black, inner sep=0.8pt] at (1.853, 0.214) {};
\node[draw, circle, fill=black, inner sep=0.8pt] at (2.1, 0.214) {};
\node[draw, circle, fill=red, inner sep=0.8pt] at (2.347, 0.214) {};
\node[draw, circle, fill=black, inner sep=0.8pt] at (2.594, 0.214) {};
\node[draw, circle, fill=red, inner sep=0.8pt] at (2.841, 0.214) {};
\node[draw, circle, fill=red, inner sep=0.8pt] at (1.236, 0.428) {};
\node[draw, circle, fill=black, inner sep=0.8pt] at (1.483, 0.428) {};
\node[draw, circle, fill=black, inner sep=0.8pt] at (1.73, 0.428) {};
\node[draw, circle, fill=black, inner sep=0.8pt] at (1.977, 0.428) {};
\node[draw, circle, fill=red, inner sep=0.8pt] at (2.224, 0.428) {};
\node[draw, circle, fill=black, inner sep=0.8pt] at (2.471, 0.428) {};

\node[draw, circle, fill=black, inner sep=0.8pt] at (1.36, 0.642) {};
\node[draw, circle, fill=red, inner sep=0.8pt] at (1.607, 0.642) {};
\node[draw, circle, fill=red, inner sep=0.8pt] at (1.854, 0.642) {};
\node[draw, circle, fill=black, inner sep=0.8pt] at (2.101, 0.642) {};
\node[draw, circle, fill=black, inner sep=0.8pt] at (1.731, 0.856) {};
\node[draw, circle, fill=black, inner sep=0.8pt] at (1.978, 0.856) {};
\node[draw, circle, fill=red, inner sep=0.8pt] at (2.108, 1.07) {};

\node[draw, circle, fill=black, inner sep=0.8pt] at (0,0) {};
\node at (-0.1,-0.1) {$-\rho$};
\node at (0.518, 0.114) {$(0,0)$};

\draw[black] (0,0) -- (0.866,0); 
\draw[black] (0.866,0) -- (1.732,0); 
\draw[black] (1.732,0) -- (3.464,0); 
\draw[black] (0,0) -- (0.866,0.5); 
\draw[black] (0.866,0.5) -- (2.598,1.5); 
\draw[black] (0.866,0) -- (0.866,0.5); 
\draw[black] (0.866,0.5) -- (1.732,0); 
\draw[black] (1.732,0) -- (1.299,0.75); 
\draw[black] (1.732,0) -- (1.732,1); 
\draw[black] (1.732,0) -- (2.598,1.5); 
\draw[black] (1.732,1) -- (3.464,0); 
\draw[black] (1.732,0) -- (2.598,0.5); 
\draw[black] (2.598,0) -- (2.598,0.5); 

\node at (0.962, 0.17) {$\omega_1$};
\node at (1.06, 0.48) {$\omega_2$};

\end{tikzpicture}
  \caption{Integral dominant weights contained in $P_{A_{sr}}$}
  \label{fig:weights_in_cell}
\end{figure}
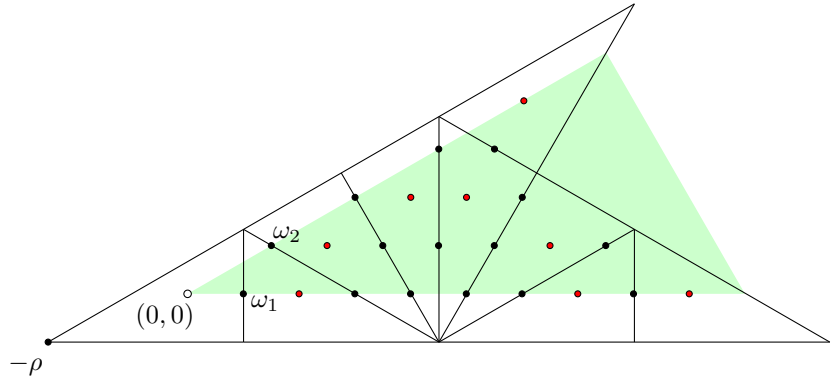

Recall that $P_{A_{sr}}$ is defined as the set of all integral dominant weights contained in the union of $C_0, C_1,\ldots, C_7$, that are not on the walls of any alcoves $C_{w'}$ with $w'\in B$, such that $B\in S_{A_{sr}}$.
A straightforward computations shows that $P_{A_{sr}}$ has $23$ elements, $8$ of which are of the form $w\cdot 0$ for $w\in A_{sr}$ and are contained in the interiors of the corresponding cells $C_w$, whereas the remaining $15$ weights are on the walls. See Figure \ref{fig:weights_in_cell}.

Let $\lambda_i, i=1,\ldots 8$ denote the unique integral dominant weight in the interior of $C_i$. Let $P_i$ denote $T(\lambda_i)$ considered as an object of $\C=\C(G_2,G_2(a_1), 7, q)$. Recall that by Proposition \ref{Cartansr} and Lemma \ref{projsr}, objects $P_0, P_1,\ldots, P_7$ are the projective objects of the principal block of $\C$, and $P_0$ is the projective cover of $\be$. Let $L_i$ denote the simple head of $P_i$, so that $L_0=\be$.

The Cartan matrix $A$ of the principal block (also computed in Proposition \ref{Cartansr}) has the following properties:
\begin{itemize}
    \item $A_{ij}=\dim\Hom_\C(P_i, P_j)=[P_j:L_i]$;
    \item $A_{ii}=2$;
    \item if $i\neq j$, $A_{ij}=1$ if there is an edge between vertices $i$ and $j$ in $X_{sr}$ (see Figure \ref{fig:Xsr_G2_labeled}), and $A_{ij}=0$ otherwise.
\end{itemize}

We obtain the following relations in the Grothendieck ring $[\C]$ of $\C$:
\begin{equation}
\label{eq:proj_and_simple}
\begin{aligned}
    [P_0]=2[L_0]+[L_1];\\
    [P_1]=[L_0]+2[L_1]+[L_2];\\
    [P_2]=[L_1]+2[L_2]+[L_3];\\
    [P_3]=[L_2]+2[L_3]+[L_4]+[L_5];\\
    [P_4]=[L_3]+2[L_4];\\
    [P_5]=[L_3]+2[L_5]+[L_6];\\
    [P_6]=[L_5]+2[L_6]+[L_7];\\
    [P_7]=[L_6]+2[L_7].
\end{aligned}    
\end{equation}

\subsection{Proof of Theorem \ref{main 7}} \label{{section: proof of theorem 1.3}}
Let us recall the statement of Theorem \ref{main 7}:
\begin{enumerate}
    \item  The category $\C(G_2,G_2(a_1),7,q)$ has 15 trivial blocks and one block of type $\tilde E_7$.
In particular it has 23 simple objects.

\item We have $\FP(\C(G_2,G_2(a_1),7,q))=294(7+15[3]_7+12[5]_7)\approx 18324.416384$.

\item The category $\C(G_2,G_2(a_1),7,q)$ has stable Chevalley property: tensor products of simple 
objects are direct sums of simples and projectives.

\item The M\"uger center of the category $\C(G_2,G_2(a_1),7,q)$ is equivalent to $\Rep(S_3)$
(where $S_3$ is the symmetric group on three letters).
\end{enumerate}

\begin{remark}
    Big part of the proof below relies on computations of tensor products of tilting modules $T(\lambda)$ for $\lambda\in P_{A_{sr}}$, and some auxiliary computations, such as matrices of tensor multiplication by generating objects $T(\omega_1)$ and $T(\omega_2)$ and their eigenvalues. These computations were made with a simple Python program. You can find the full code on \href{https://github.com/autiralova/A-non-semisimple-Witt-class-code}{GitHub} (see \cite{OU}).
\end{remark}

\begin{proof}
(1) The statement follows from the computation above (see Figure \ref{fig:weights_in_cell}), Proposition \ref{Cartansr}, and Corollary \ref{cor:on the  wall}.

(2) Let $\omega_1$ and $\omega_2$ be the two fundamental weights for $\mathfrak g$. Let us totally order the elements of $P_{A_{sr}}$ in some way consistent with the partial dominance order, so that $\omega_1$ is the first, and $\omega_2$ is the second element. Denote the $i$'th element by $\mu_i$.
        
        Using Soergel's algorithm (see \cite{So2}) one can explicitly compute the characters of all tilting modules $T(\lambda)$ with $\lambda$ not too large. It is then a straightforward computation to find the matrices $M(\omega_1)$ and $M(\omega_2)$ of tensor multiplication by $T(\omega_1)$ and $T(\omega_2)$ respectively. Moreover, as $T(\omega_1)$ and $T(\omega_2)$ tensor generate the whole category $\mathcal T(\mathfrak g, q)$, one can find the matrix $M(\mu)$ of tensor multiplication by $T(\mu)$ for all $\mu\in P_{A_{sr}}$ as follows.

         Matrix $M(\mu_k)$ is a square matrix of size $23$, and $M(\mu_k)_{ij}$ is the multiplicity $[T(\mu_k)\otimes T(\mu_j):T(\mu_i)]$ of $T(\mu_i)$ as a direct summand in $T(\mu_k)\otimes T(\mu_j)$ for all $i,j=1,\ldots, 23$. Define the augmented matrix $\widetilde M(\mu_k)$ by adding row and column number $0$ to $M(\mu_k)$ (so that $\widetilde M(\mu_k)$ is a 24-by-24 matrix). Put $M(\mu_k)_{0j}=0$ for all $j=0,1,\ldots, 23$, and $\widetilde M(\mu_k)_{i0} =\delta_{ik}$, i.e. $\widetilde M(\mu_k)_{i0}$ is the multiplicity  $[T(\mu_k)\otimes \be: T(\mu_i)]$. 

         In $\C=\C(G_2, G_2(a_1),7, q)$, we have 
         $$ T(\omega_1)^{\otimes a}\otimes T(\omega_2)^{\otimes b} = \bigoplus_{i=1}^{23}
           T(\mu_i)^{\oplus (\widetilde M(\omega_1)^a \cdot \widetilde M(\omega_2)^b)_{i0}}.
         $$
         Let $X=[T(\omega_1)], Y=[T(\omega_2)]$ be  the classes of the generating elements in the Grothendieck ring $[\C]$ of $\C$. For each pair of integers $(a,b)$, such that $\mu_i=a\omega_1+b\omega_2$, we obtain a linear equation in $[\C]$:
         $$
         \sum_{j=1}^{i} (\widetilde M(\omega_1)^a \cdot \widetilde M(\omega_2)^b)_{j0}\cdot [T(\mu_j)] = X^aY^b.
         $$
         The solution of this upper-triangular system of $23$ linear equations is a collection of polynomials $\{f_\mu~|~\mu\in P_{A_{sr}}\}$, such that
         $$
         f_\mu(X,Y) = [T(\mu)] \text{ in } [\C],
         $$
         and if $\mu=a\omega_1+b\omega_2$ then the leading term of $f_\mu(X,Y)$ is $X^aY^b$.

         By definition, $\FP T(\omega_i)$ is the largest nonnegative real eigenvalue of $M(\omega_i)$. We compute directly that 
         $$
         \FP (T(\omega_1)) = 1+2\cdot [3]_7,
         $$
         $$
         \FP (T(\omega_2)) = [2]_7+3\cdot [3]_7,
         $$
         for some choice of ordering of $\omega_1,\omega_2$.
         
         We get
         $$M(\mu)=f_\mu(M(\omega_1), M(\omega_2)),$$
         $$
         \FP(T(\mu)) = f_\mu(\FP (T(\omega_1)), \FP (T(\omega_2))).
         $$

         Let $L(\mu)$ denote the head of $T(\mu)$ in $\C$. If $\mu$ is on a wall of some alcove, by Corollary \ref{cor:on the  wall} we get $T(\mu)=L(\mu)$. If $\mu$ is in the interior of some alcove, i.e. $T(\mu) = P_j$ (for some $j$) is in the principal block, then $L(\mu)=L_j$. In this case $\FP(L(\mu))$ can be computed using the relations \ref{eq:proj_and_simple}.
         
         Then the Frobenius-Perron dimension of the category $\C$ is defined as
         $$
         \FP(\C) = \sum_{i=1}^{23} \FP(T(\mu_i))\cdot \FP(L(\mu_i)),
         $$
         and a direct computation shows that 
         $$
         \FP(\C) = 294(7+15[3]_7+12[5]_7).
         $$
         
(3) Let $\mu$ be on the wall, so that $T(\mu)=L(\mu)$. Then $T(\mu)$ is projective and the tensor product of it with any simple module is the direct sum of some indecomposable projectives.

    Thus, we only need to figure out the tensor products $L_i\otimes L_j$ for $1\le i,j \le 7$ (since $L_0=\be$).

    The matrices of multiplication of $\{T(\mu)~|~\mu\in P_{A_{sr}}\}$ by $L_i$ for $i-0,\ldots, 7$, can be found by using such matrices for $P_i$ and the relations \ref{eq:proj_and_simple}.

    Our computation shows the following:
    \begin{lemma} \label{lemma:tens_simp_by_proj}
        (a) $L_7$ is invertible of order two, $\FP(L_7) = 1$, 
        \begin{align*}
            P_7 = P_0\otimes L_7,\\
            P_6=P_1\otimes L_7,\\
            P_5 = P_2\otimes L_7,\\
            P_3 = P_3\otimes L_7,\\
            P_4 = P_4\otimes L_7.
        \end{align*}
        (b) $\FP(L_4) = 2,$ 
        \begin{align*}
            P_0\otimes L_4 = P_7\otimes L_4 = P_4,\\
            P_1\otimes L_4 = P_6\otimes L_4 = P_3,\\
            P_3\otimes L_4 = P_1\oplus P_3\oplus P_6,\\
            P_4\otimes L_4 = P_0\oplus P_4 \oplus P_7,\\
            P_2\otimes L_4 = P_5\otimes L_4 = P_2\oplus P_5.
        \end{align*}
    \end{lemma}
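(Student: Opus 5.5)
The plan is to reduce everything to the Grothendieck ring and the Cartan matrix \eqref{eq:proj_and_simple}, together with the tensor product multiplicities of the tilting modules $P_i\otimes P_j$. These multiplicities are computed via the polynomials $f_\mu$ and the matrices $M(\mu)$ from the proof of part (2), reduced modulo $\cI_{e_{sr}}$ by discarding summands $T(\lambda)$ with $\lambda\notin P_{A_{sr}}$.

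The first step is to express each class $[L_j]$ in terms of the $[P_i]$ by inverting the matrix in \eqref{eq:proj_and_simple}. Over $\mathbb{Q}$ this is impossible, since $2\Id+A(\tilde E_7)$ is degenerate. Instead I would work with the classes of projectives, since tensoring by $L_j$ sends projectives to projectives. For a projective $P$, the decomposition of $P\otimes L_j$ into indecomposable projectives is determined by the multiplicities $\dim\Hom(P_i, P\otimes L_j)$. These in turn are read off from $[P\otimes L_j:L_i]$ after computing $[P\otimes L_j]$ in the Grothendieck group.

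I would compute $[P\otimes L_j]$ by writing $[L_j]$ as a combination of the $[P_i]$ plus a multiple of the kernel vector, which is the $\tilde E_7$ null vector $(1,2,3,4,2,3,2,1)$ with signs alternating by bipartition. Products with the kernel direction are controlled by projectivity. A cleaner route is to compute the full $23\times 23$ matrix of $\otimes L_j$ on projectives, as the paper indicates. This is done by solving $[P_k]=\sum_i A_{ki}[L_i]$ successively from a leaf: $[L_0]$ is known (unit), so $[L_1]=[P_0]-2[L_0]$ gives $\otimes L_1$, then $\otimes L_2$, and so on along the tree. In this way one obtains the matrices of $\otimes L_j$ recursively from those of $\otimes P_j$. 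Reading off the resulting matrices for $j=7$ and $j=4$ gives the stated decompositions.

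Next, $\FP(L_7)=1$ and $\FP(L_4)=2$ follow either from these matrices (as the Perron eigenvalues) or from the relations \eqref{eq:proj_and_simple}, using the already computed $\FP(P_i)$. To see that $L_7$ is invertible of order two, I would argue that an object of Frobenius--Perron dimension $1$ is invertible. Since $L_7$ is self-dual by Corollary \ref{cor:self-dual}, we get $L_7\otimes L_7\simeq\be$. The listed identities $P_{7-i}=P_i\otimes L_7$ for $i\le2$ and $P_3\otimes L_7\simeq P_3$, $P_4\otimes L_7\simeq P_4$ are consistent with $\otimes L_7$ permuting projective covers by the diagram automorphism of $\tilde E_7$, as must happen since $P(L)\otimes L_7=P(L\otimes L_7)$. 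They are checked by matching heads via the matrix computation.

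The main obstacle is the correctness of the recursive computation: subtracting multiples of lower classes must produce nonnegative integer multiplicities, and one must ensure that the Hom computation of $\otimes P_j$ modulo $\cI_{e_{sr}}$ correctly drops non-$P_{A_{sr}}$ summands. I would cross-check the results using adjunction, namely $\dim\Hom(P_i\otimes L_4,P_k)=\dim\Hom(P_i,P_k\otimes L_4)$ (as $L_4$ is self-dual), which forces the symmetric pattern seen in (b). I would also check consistency of Frobenius--Perron dimensions, for example $\FP(P_3)\cdot 2=\FP(P_1)+\FP(P_3)+\FP(P_6)$.
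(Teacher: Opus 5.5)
Your ``cleaner route'' is exactly how the paper establishes this lemma: it obtains the matrices of $\otimes L_j$ on projectives from the tilting-multiplicity matrices of $\otimes P_j$ via the relations \eqref{eq:proj_and_simple}, starting from $L_0=\be$, and then reads off the decompositions and $\FP$'s (the paper states the lemma as the outcome of that computation). One caution: your earlier claim that the decomposition of $P\otimes L_j$ is determined by $\dim\Hom(P_i,P\otimes L_j)=[P\otimes L_j:L_i]$ is false in general here, because these are composition multiplicities and the Cartan matrix is degenerate; what actually justifies the recursion is that tensoring a composition series of $P_k$ with a projective $Q$ gives a filtration by projectives, which splits, so $P_k\otimes Q\simeq\bigoplus_i (L_i\otimes Q)^{\oplus A_{ki}}$ at the level of direct summands (and at the trivalent vertex $3$ you also need the leaf relation $[P_4]=[L_3]+2[L_4]$ to separate $L_4$ from $L_5$).
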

    Using relations \ref{eq:proj_and_simple} and Lemma \ref{lemma:tens_simp_by_proj}, we get 
    \begin{corollary} \label{cor:tens_simp_by_proj}
        \begin{align*}
            L_6 = L_1\otimes L_7,\\
            L_5 = L_2\otimes L_7,\\
            L_3 = L_3\otimes L_7,\\
            L_4 = L_4\otimes L_7,\\
        \end{align*}
        and
        \begin{align*}
            L_0\otimes L_4 = L_7\otimes L_4=L_4,\\
            L_1\otimes L_4 = L_6\otimes L_4 = L_3,\\
            L_3\otimes L_4 = L_1\oplus L_3\oplus L_6,\\
            L_4\otimes L_4 = L_0\oplus L_4\oplus L_7,\\
            L_2\otimes L_4 = L_5\otimes L_4 = L_2\oplus L_5.\\
        \end{align*}
    \end{corollary}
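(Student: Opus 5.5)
We deduce Corollary \ref{cor:tens_simp_by_proj} from Lemma \ref{lemma:tens_simp_by_proj} by passing to the Grothendieck ring $[\C]$ and inverting the Cartan matrix. The Cartan matrix $2\Id+A(X_{sr})$ of the principal block is degenerate (see the proof of Proposition \ref{Cartansr}). So we cannot simply invert the relations \eqref{eq:proj_and_simple}. Instead we argue directly with composition series.

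\textbf{Tensoring with $L_7$.} Since $L_7$ is invertible, $-\otimes L_7$ is an autoequivalence of $\C$ as an abelian category. It therefore permutes the simple objects, and it sends the projective cover of $L_i$ to the projective cover of $L_i\otimes L_7$. By Lemma \ref{lemma:tens_simp_by_proj}(a), $P_1\otimes L_7=P_6$, so $L_1\otimes L_7$ is the head of $P_6$, namely $L_6$. In the same way we get $L_2\otimes L_7=L_5$, $L_3\otimes L_7=L_3$ and $L_4\otimes L_7=L_4$. No Grothendieck ring computation is needed for this part.

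\textbf{Tensoring with $L_4$, first two relations.} Since $-\otimes L_4$ is exact, $[X\otimes L_4]=[X][L_4]$ in $[\C]$, and we compute $[L_i][L_4]$ by induction along the tree $X_{sr}$, starting from the leaf $0$.
\begin{itemize}
\item From $[P_0]=2[L_0]+[L_1]$ and $P_0\otimes L_4=P_4$ we get $2[L_4]+[L_1\otimes L_4]=[P_4]=[L_3]+2[L_4]$. Hence $[L_1\otimes L_4]=[L_3]$, and since $[L_3]$ is the class of a simple object, $L_1\otimes L_4\simeq L_3$.
\item The relation $L_6\otimes L_4=L_3$ then follows from $L_6=L_1\otimes L_7$ and the braided commutativity $L_7\otimes L_4\simeq L_4\otimes L_7= L_4$. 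Likewise $L_7\otimes L_4=L_4$ is immediate.
\end{itemize}

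\textbf{Remaining relations.} We continue along the tree.
\begin{itemize}
\item Applying $[P_1][L_4]=[P_3]$ gives $[L_4]+2[L_3]+[L_2\otimes L_4]=[L_2]+2[L_3]+[L_4]+[L_5]$. So $[L_2\otimes L_4]=[L_2]+[L_5]$, and then $L_5\otimes L_4=L_2\otimes L_4\otimes L_7$ has the same class, because $L_5\otimes L_7=L_2$.
\item Next, $[P_2][L_4]=[P_2]+[P_5]$ yields $[L_3\otimes L_4]=[L_1]+[L_3]+[L_6]$.
\item Finally, $[P_4][L_4]=[P_0]+[P_4]+[P_7]$ together with the known $[L_3\otimes L_4]$ yields $[L_4\otimes L_4]=[L_0]+[L_4]+[L_7]$.
\end{itemize}

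\textbf{Main obstacle: upgrading classes to direct sums.} The key step is showing that these objects are semisimple, not merely that their composition factors are correct. The factors listed in each case lie in distinct positions of the tree that are pairwise non-adjacent in $X_{sr}$. By Proposition \ref{tree} the principal block is $\Rep(\tilde X_{sr})$, in which $\Ext^1(L_i,L_j)\neq 0$ only for adjacent $i,j$. Therefore any object whose composition factors are pairwise non-adjacent and each of multiplicity one is semisimple. This covers $\{1,3,6\}$, $\{2,5\}$ and $\{0,4,7\}$.

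For $L_4\otimes L_4$ one could alternatively argue via the self-duality of Corollary \ref{cor:self-dual}, since $\Hom(L_0,L_4\otimes L_4)=\End(L_4)$ splits off $\be$. The non-adjacency argument already suffices.

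We should also confirm that each Grothendieck class computed above involves only the simples $L_i$ of the principal block. This holds because tensoring with $L_4$ preserves the principal block. Indeed, $L_4\otimes P_i$ lies in the principal block by Lemma \ref{lemma:tens_simp_by_proj}, so its subquotients $L_4\otimes L_i$ do as well.
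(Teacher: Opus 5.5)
Your proposal is correct and follows essentially the paper's route: you compute the classes $[L_i\otimes L_4]$ in the Grothendieck ring from Lemma~\ref{lemma:tens_simp_by_proj} and the relations \eqref{eq:proj_and_simple}, and you upgrade them to direct sums because the simples that occur are pairwise non-adjacent in $X_{sr}$ and so have no extensions between them. The differences are minor. You solve for one unknown at a time, tensoring $P_0, P_1, P_2, P_4$ with $L_4$, where the paper solves a simultaneous system coming from $P_3, P_4, P_2$. You also derive $L_1\otimes L_4\cong L_3$ from a class computation instead of the paper's ``obvious'' head argument. That is a welcome tightening, since $-\otimes L_4$, unlike $-\otimes L_7$, is not an autoequivalence.
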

    \begin{proof}
        It is obvious that if $P,P'$ are indecomposable projective objects with simple heads $L, L'$, then $M\otimes P = P'$ implies $M\otimes L=L'$. 
        
        For the last three equalities, looking at relations \ref{eq:proj_and_simple}, we get
        $$
        [L_0]+2[L_1]+2[L_2]+2[L_3]+[L_4]+2[L_5]+2[L_6] = [P_1]+[P_3]+[P_6] =
        $$
        $$
        =[P_3\otimes L_4] = [L_2\otimes L_4] + 2[L_3\otimes L_4] + [L_4\otimes L_4] + [L_5\otimes L_4];
        $$
        $$
        2[L_0] + [L_1] + [L_3]+2[L_4]+[L_6]+2[L_7] = [P_0]+[P_4]+[P_7] = 
        $$
        $$
        = [P_4\otimes L_4] = [L_3\otimes L_4]+2[L_4\otimes L_4];
        $$
        $$
        [L_1]+2[L_2]+2[L_3]+2[L_5]+[L_6]=[P_2]+[P_5] =
        $$
        $$
        =[P_2\otimes L_4] = [L_1\otimes L_4]+2[L_2\otimes L_4]+[L_3\otimes L_4].
        $$

        Now using that $L_1\otimes L_4 = L_3, L_7\otimes L_4 = L_4,$  $L_2\otimes L_7=L_5$, so $L_5\otimes L_4= L_2\otimes L_4$, we get three linear equations:
        $$
        [L_0]+2[L_1]+2[L_2]+2[L_3]+[L_4]+2[L_5]+2[L_6]  = 2[L_2\otimes L_4] + 2[L_3\otimes L_4] + [L_4\otimes L_4],
        $$
        $$
        2[L_0] + [L_1] + [L_3]+2[L_4]+[L_6]+2[L_7] = [L_3\otimes L_4]+2[L_4\otimes L_4],
        $$
        $$
        [L_1]+2[L_2]+[L_3]+2[L_5]+[L_6] = 2[L_2\otimes L_4]+[L_3\otimes L_4].
        $$

        Solving, we get
        $$
        [L_4\otimes L_4] = [L_0]+[L_4]+[L_7],
        $$
        $$
        [L_3\otimes L_4] = [L_1] + [L_3]+[L_6],
        $$
        $$
        [L_2\otimes L_4] = [L_2]+[L_5].
        $$
        To finish the proof we note that there are no extensions between the simple modules in each of the sums above.
    \end{proof}

    The rest of the proof of part (3) is analogous to that of the Corollary \ref{cor:tens_simp_by_proj} above. The most important calculation is that of $L_1\otimes L_1$. A straightforward computation shows that 
    $$
    L_1\otimes P_0 = P_1\oplus P_1\oplus P',
    $$
    where $P'$ is some projective object with trivial projection onto the principal block. 

    As $L_1$ is self-dual (see Corollary \ref{cor:self-dual}) of categorical dimension $\dim P_0 -2\dim L_0=0-2\neq 0$, $\be=L_0$ must be a direct summand of $L_1\otimes L_1$.

    Now, let $\pi_i:P_i\to L_i$, and $\iota_i: L_i\to \mathrm{Ker}(\pi_i)$ be the projection onto the head and the embedding of the socle homomorphisms respectively for $i=0,1$. Let $\iota'_i$ be the composition of $\iota_i$ with the embedding of $\mathrm{Ker}(\pi_i)$ into $P_i$. Recall that $\mathrm{Coker}( \iota_0) = L_1$.
    
    Then
    $$
    \mathrm{Ker}(\pi_0\otimes id_{L_1})=P_1\oplus \mathrm{Ker}(\pi_1)\oplus P',
    $$
    since there is only one (up to scalar) nonzero homomorphism $P_1\to L_1$. Consider the map $\iota_0\otimes id_{L_1}$ as a homomorphism from $L_1$ to $P_1\oplus \mathrm{Ker}(\pi_1)\oplus P'$. We have two possibilities for
    $$
    L_1\otimes L_1 = \mathrm{Coker}(\iota_0\otimes id_{L_1}).
    $$
    If $L_1$ maps via $\iota_1$ to $\mathrm{Ker}(\pi_1)$ and via zero to $P_1$ then 
    $$
    L_1\otimes L_1 = P_1\oplus P'\oplus \mathrm{Coker}(\iota_1) = P_1\oplus P'\oplus L_0\oplus L_2.
    $$
    Otherwise, if the map from $L_1$ to $P_1$ is nonzero, we get
    $$
    L_1\otimes L_1 = \mathrm{Coker}(\iota'_1) \oplus \mathrm{Ker}(\pi_1) \oplus P'.
    $$
    Note that this case is impossible as $L_0$ is not a direct summand of either $\mathrm{Coker}(\iota'_1)$ or $\mathrm{Ker}(\pi_1)$.
    
    The remaining tensor products $L_i\otimes L_j$ are computed inductively, using the exact computations for $L_i\otimes P_j$ and the results of Corollary \ref{cor:tens_simp_by_proj}.  Note that as $L_3=L_1\otimes L_4$, $L_5=L_2\otimes L_7, L_6=L_1\otimes L_7$, we only need to compute  tensor products $L_1\otimes L_2$ and $L_2\otimes L_2$.

    Let $(M)_0$ denote the projection of an object $M$ onto the principal block. We get
    $$
    [(L_1\otimes L_2)_0]=[(L_1\otimes P_1)_0]-2[(L_1\otimes L_1)_0]-[L_1\otimes L_0] = 
    $$
    $$
    [P_0]+2[P_1]+[P_2]-2[P_1]-2[L_0]-2[L_2]-[L_1] = [L_1]+[L_3],
    $$
    so  $(L_1\otimes L_2)_0 =L_1\oplus L_3$, as there are no extensions between $L_1$ and $L_3$. 

    Similarly,
    $$
    [(L_2\otimes L_2)_0] = [(L_2\otimes P_1)_0]-2[(L_2\otimes L_1)_0] - [L_2\otimes L_0] =
    $$
    $$
    = [P_1]+[P_3] - 2[L_1]-2[L_3] - [L_2] = [L_0]+[L_2]+[L_4]+[L_5].
    $$
    Since vertices $0,2,4,5$ are not connected in $X_{sr}$ (see Figure \ref{fig:Xsr_G2_labeled}), we get 
    $$
    (L_2\otimes L_2)_0 = L_0\oplus L_2\oplus L_4\oplus L_5.
    $$
    This ends the proof of part (3).

(4) Recall that the M\"uger center of a category $\C$ is a full tensor subcategory $\C'$, generated by all simple objects $X\in \C$ such that $c_{Y,X}\circ c_{X,Y} = id_{X\otimes Y}$ for any $Y\in \C$ (where $c_{X,Y}$ is the braiding). Note that $\C'$ is automatically a symmetric tensor category.

We would like to show that the M\"uger center of $\C$ is the subcategory spanned by $L_0, L_4$, and $L_7$. These simple objects do not admit any extensions between them, so the category in question is semisimple. Moreover, the computations in Corollary \ref{cor:tens_simp_by_proj} show that there is a tensor equivalence between this subcategory and the category $\Rep (S_3)$, sending $L_4$ to the simple 2-dimensional representation $V$, and $L_7$ to the sign representation $sgn$ of $S_3$ (see Figure \ref{fig:E_7 simples}).

\begin{figure}[htbp]
  \centering
  \begin{tikzpicture}[scale=1.2, every node/.style={font=\small}]

\draw[black, thick] (0,0) -- (6,0);
\draw[black, thick] (3,0) -- (3,-1);

\node[draw, circle, fill=yellow, inner sep=1.5pt] at (0,0) {};

\node at (0,0.2) {$L_0=\be$};

\node[draw, circle, fill=red, inner sep=1.5pt] at (1,0) {};

\node at (1,0.2) {$L_1$};

\node[draw, circle, fill=blue, inner sep=1.5pt] at (2,0) {};
\node at (1.8,0.2) {$L_2$};
\node[draw, circle, fill=red, inner sep=1.5pt] at (3,0) {};
\node at (2.8,0.2) {$V\otimes L_1$};
\node[draw, circle, fill=yellow, inner sep=1.5pt] at (3,-1) {};
\node at (3,-1.2) {$V$};
\node[draw, circle, fill=blue, inner sep=1.5pt] at (4,0) {};
\node at (4,0.2) {$sgn\otimes L_2$};
\node[draw, circle, fill=red, inner sep=1.5pt] at (5,0) {};
\node at (5.2,0.2) {$sgn\otimes L_1$};
\node[draw, circle, fill=yellow, inner sep=1.5pt] at (6,0) {};

\node at (6.2,0.2) {$sgn$};
\end{tikzpicture}
  \caption{The action of $\Rep(S_3)$ on the principal block of $\C$}
  \label{fig:E_7 simples}
\end{figure}
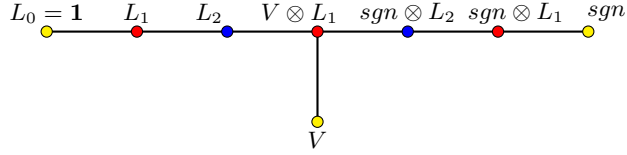

The rest of the proof relies on the following computational results.
\begin{lemma} \label{lemma:same orbit}
    For any weight $\lambda$ in $P_{A_{sr}}$,  tensor product $L_4\otimes T(\lambda)$ is the direct sum of objects $T(\mu)$ with $\lambda$ and $\mu$ in the same orbit under the dot-action of $\tilde W_l$.

    In particular, $\theta_\lambda = q^{\langle \lambda, \lambda+2\rho\rangle} = q^{\langle \mu, \mu+2\rho\rangle} = \theta_\mu$.

    If $\lambda$ is on a wall, i.e. not in the orbit of zero, then $T(\lambda)$ is a simple object of $\C$, which implies $\theta_{T(\lambda)}=\theta_\lambda=\theta_\mu = \theta_{T(\mu)}$ (see property (3) in Section \ref{qgtitling}). 
\end{lemma}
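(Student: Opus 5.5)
The lemma is labelled a computational result, so my plan is to reduce it to a finite check on tilting characters and then derive the two consequences formally.

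\emph{Step 1: realize $L_4$ by a tilting module.} By Corollary \ref{cor:tens_simp_by_proj} we have $[P_4]=[L_3]+2[L_4]$. Together with the relations \eqref{eq:proj_and_simple}, this writes $[L_4]$ as an integer combination of the classes $[P_i]$, i.e.\ of $[T(w\cdot 0)]$, $w\in A_{sr}$. Hence the class $[L_4\otimes T(\lambda)]$ in $[\C]$ is the corresponding integer combination of the classes $[T(w\cdot 0)\otimes T(\lambda)]$.

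\emph{Step 2: decompose those products and invert to the $T(\mu)$.} The product $T(w\cdot 0)\otimes T(\lambda)$ is a tilting module. By \ref{qgtitling}(4),(6), its decomposition into indecomposables can be read off its character $\mathrm{ch}(T(w\cdot 0))\cdot\mathrm{ch}(T(\lambda))$ using the Soergel characters already computed. We then discard summands lying in the ideal (weights outside $P_{A_{sr}}$). Since $L_4\otimes T(\lambda)$ is projective, it is a direct sum of $T(\mu)$ with $\mu\in P_{A_{sr}}$. The $[T(\mu)]$ are linearly independent in $[\C]$, because the Cartan matrix is nondegenerate off the principal block, and the principal-block projectives are determined by their composition factors. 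So the multiplicities of the $T(\mu)$ are determined by the class computed above. This is exactly the matrix $\widetilde M$-type computation done by the program \cite{OU}.

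\emph{Step 3: check the orbits.} For each of the 23 weights $\lambda$, verify that every $\mu$ occurring lies in $\tilde W_l\cdot\lambda$. For $\lambda$ in the principal block this is immediate from Lemma \ref{lemma:tens_simp_by_proj}(b), since all $P_i$ have highest weights $w\cdot 0$. For the 15 wall weights this is a finite case check.

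There is also a partial conceptual shortcut. Tensoring with $L_4$ preserves the block decomposition, because $L_4$ lies in the principal block, which is closed under the M\"uger-type action in Figure \ref{fig:E_7 simples}. However, blocks of $\C$ are finer than linkage classes only in one direction, so the explicit check is still needed. I expect Step 3 on the wall weights to be the main (purely computational) obstacle.

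\emph{Step 4: deduce the twist statements.} For $\mu=w\cdot\lambda$ with $w\in\tilde W_l$, we have $\langle\mu+\rho,\mu+\rho\rangle-\langle\lambda+\rho,\lambda+\rho\rangle\in 2l\Z$ when $\langle\,,\rangle$ is normalized with short roots of length 2. This holds because the finite Weyl group preserves the form, and translation by $l$ times (co)roots changes it by a multiple of $2l$. Since $q^{2l}=1$, it follows that $\theta_\lambda=q^{\langle\lambda+\rho,\lambda+\rho\rangle-\langle\rho,\rho\rangle}=\theta_\mu$.

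Finally, if $\lambda$ is on a wall, $T(\lambda)$ is simple by Corollary \ref{cor:on the  wall}. Its endomorphism ring in $\C$ is then $\BC$, so by \ref{qgtitling}(3) the nilpotent part of the twist vanishes and $\theta_{T(\lambda)}=\theta_\lambda$. The summands $T(\mu)$ are on walls too, being in the same orbit, so likewise $\theta_{T(\mu)}=\theta_\mu$.
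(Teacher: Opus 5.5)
Your overall route (decompose products of tilting modules via Soergel characters, express $L_4$ through the principal-block projectives, then run a finite check) is the one the paper takes. The paper records this lemma as an output of the program \cite{OU}, using the multiplication matrices of the $P_i$ and \eqref{eq:proj_and_simple}. Your Step 4 and the wall argument are correct.

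However, Steps 1 and 2 rest on two false claims, and both fail because the principal-block Cartan matrix $2\Id+A(X_{sr})$ is degenerate. The paper proves and uses this degeneracy in Proposition \ref{Cartansr}. The kernel is spanned by $v=(1,-2,3,-4,2,3,-2,1)$, with vertices numbered as in Figure \ref{fig:Xsr_G2_labeled}, i.e.
\[
[P_0]-2[P_1]+3[P_2]-4[P_3]+2[P_4]+3[P_5]-2[P_6]+[P_7]=0 \quad \text{in } [\C].
\]
Since the Cartan matrix is symmetric, the rational span of the $[P_i]$ is $v^{\perp}$. But $[L_4]$ pairs with $v$ to $2\neq 0$, so $[L_4]$ is not even a rational combination of the $[P_i]$, and Step 1 cannot be carried out. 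For the same reason the $[T(\mu)]$ are not linearly independent in $[\C]$. So your Step 2 justification for reading multiplicities off a class in $[\C]$ is wrong. (Also, $[P_4]=[L_3]+2[L_4]$ is a row of \eqref{eq:proj_and_simple}, not a consequence of Corollary \ref{cor:tens_simp_by_proj}.)

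The repair is to bring in the unit, which acts as the identity. Solving \eqref{eq:proj_and_simple} starting from the $P_0$ end gives
\[
2[L_4]=[P_4]-[P_2]+2[P_1]-3[P_0]+4[\be].
\]
Then work in the split Grothendieck group of projective objects, which is free on the $[T(\mu)]$, $\mu\in P_{A_{sr}}$. It is a module over $[\C]$ because $-\otimes T(\lambda)$ is exact and short exact sequences of projectives split. Hence $2[L_4\otimes T(\lambda)]$ is the corresponding integer combination of $[P_4\otimes T(\lambda)],[P_2\otimes T(\lambda)],[P_1\otimes T(\lambda)],[P_0\otimes T(\lambda)]$ and $[T(\lambda)]$, and the multiplicities can be read off. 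This is what the paper's computation does.

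For the orbit statement alone, the class in $[\C]$ would in fact suffice. The $[T(\mu)]$ are nonzero with nonnegative coordinates in the basis of simples, so the class detects which blocks carry summands.

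Finally, drop the ``conceptual shortcut'': tensoring with $L_4$ does not preserve blocks. Take a simple projective $L$ of type (a) in Lemma \ref{lemma: Rep(S_3)-module structure} lying outside the principal block. Then $L_4\otimes L$ is a different simple projective, so it sits in a different trivial block. Membership of $L_4$ in the principal block gives no closure property of this kind.
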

\begin{lemma} \label{lemma: integral FPdim}
    Objects $L_0,L_4, L_7$ are the only simple objects in $\C$, whose Frobenius-Perron dimensions are integer.
\end{lemma}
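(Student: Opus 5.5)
We have to show that among the 23 simple objects of $\C$ exactly $L_0, L_4, L_7$ have integer Frobenius--Perron dimension. The plan is to compute all 23 dimensions in $\mathbb{Z}[[3]_7,[5]_7]$, reusing the data from the proof of part (2) of Theorem \ref{main 7}. The simple objects are of two kinds: the 15 objects $T(\mu)$ with $\mu$ on a wall, and the 8 simple objects $L_0,\dots,L_7$ of the principal block.

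For the wall objects we have $T(\mu)=L(\mu)$ by Corollary \ref{cor:on the  wall}. So $\FP(T(\mu))=f_\mu(\FP T(\omega_1),\FP T(\omega_2))$, with $\FP T(\omega_1)=1+2[3]_7$ and $\FP T(\omega_2)=[2]_7+3[3]_7$. Every such value lies in the ring $\mathbb{Z}[2\cos(\pi/7)]$, which is the ring of integers of the cubic field $\mathbb{Q}(\cos(\pi/7))$. We rewrite it in the $\mathbb{Q}$-basis $1,[3]_7,[5]_7$, using the relations $[2]_7=[5]_7$ and $[2]_7^2=1+[3]_7$, and the fact that $[3]_7$ and $[5]_7$ satisfy the minimal polynomial of $2\cos(\pi/7)$. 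A number in this basis is rational exactly when its coefficients of $[3]_7$ and $[5]_7$ both vanish. We check that this never happens for the 15 wall weights.

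A quicker structural route for the wall objects is the following. Since $T(\mu)$ is projective, $\FP(T(\mu))$ is divisible by $\FP(P_0)$ in the sense of \cite[6.1]{EGNO}. On the other hand, these values are presumably Galois-conjugate to nonzero quantum dimensions $\dim_q$ in $\C(G_2,7,q)$, up to the factor $\FP(P_0)$. We can therefore simply read off the 15 explicit values from the computer output \cite{OU}.

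For the principal block, we invert the relations \eqref{eq:proj_and_simple}. This Cartan matrix is degenerate, but it can be inverted by peeling from the leaves. We already know $\FP(L_0)=1$, and $\FP(L_7)=1$, $\FP(L_4)=2$ by Lemma \ref{lemma:tens_simp_by_proj}. Then $\FP(L_1)=\FP(P_0)-2$, and successively $\FP(L_2)=\FP(P_1)-2\FP(L_1)-1$. Corollary \ref{cor:tens_simp_by_proj} gives $\FP(L_3)=2\FP(L_1)$, $\FP(L_5)=\FP(L_2)$ and $\FP(L_6)=\FP(L_1)$. It therefore suffices to show that $\FP(P_0)$ and $\FP(P_1)-2\FP(P_0)$ are irrational. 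Here $\FP(P_0)=f_{s_0\cdot 0}$ evaluated at the dimensions of the generators, and we again expand it in the basis $1,[3]_7,[5]_7$. For example, $\FP(\C)=\sum\FP(P)\FP(L)$ is already irrational, and $\FP(P_0)$ should be a rational multiple of something like $7+15[3]_7+12[5]_7$ divided by a unit.

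The main obstacle is the bookkeeping: carrying out the polynomial evaluation $f_\mu$ exactly in the cubic field rather than numerically. I would do this with the same Python program, using exact arithmetic modulo the minimal polynomial $x^3-x^2-2x+1$ of $2\cos(\pi/7)$ (the minimal polynomial of $[2]_7$). The final check is then that in each of the 20 remaining cases at least one irrational coordinate is nonzero.
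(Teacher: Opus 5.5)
Your main line matches the paper, which simply presents this lemma as a computer-checked fact. You evaluate the $f_\mu$ exactly in $\mathbb{Z}[2\cos(\pi/7)]$ for the 15 wall weights, and you peel \eqref{eq:proj_and_simple} starting from $\FP(L_0)=1$, which is what makes the degenerate Cartan system solvable. This reduces everything to finitely many exact checks. In fact one gets $\FP(L_1)=\FP(L_6)=2+3\alpha$, $\FP(L_2)=\FP(L_5)=3\alpha$ and $\FP(L_3)=4+6\alpha$, with $\alpha=[3]_7+[5]_7$, all irrational.

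Drop the side remarks, though, because they are false here:
\begin{itemize}
\item $[3]_7$ is a root of $x^3-2x^2-x+1$, not of $x^3-x^2-2x+1$.
\item $\FP(P_0)=4+3\alpha$ has norm $91$, while the projective simple $T(\omega_1)$ has $\FP(T(\omega_1))=1+2[3]_7$ of norm $-7$. So $\FP(T(\mu))$ is not divisible by $\FP(P_0)$.
\item $7+15[3]_7+12[5]_7$ has norm $7$, and $91/7=13$ is not a rational cube. So $\FP(P_0)$ is not a rational multiple of it divided by a unit.
\end{itemize}
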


Since $L_4$ lies in the same block as $L_0=\be$ and $\theta\in \mathrm{Aut}(id_\C)$, we must have $\theta_{L_4}=id_{L_4}$.

Equation \ref{twist} implies that for any simple object $X$ in $\C$
$$
c_{X,L_4}\circ c_{L_4,X} = \frac{\theta_{L_4\otimes X}}{\theta_X\otimes \theta_{L_4}} = \frac{\theta_{L_4\otimes X}}{\theta_X}.
$$

Now, by Corollary \ref{cor:tens_simp_by_proj} and Lemma \ref{lemma:same orbit}, $X\otimes L_4$ is the direct sum of simple objects $Y_1,\ldots, Y_k$ satisfying $\theta_{Y_i}=\theta_X$.

Thus, $c_{X,L_4}\circ c_{L_4,X} = id_{L_4\otimes X}$, proving that $L_4$ is in $\C'$.

As $L_4\otimes L_4 = L_0\oplus L_4\oplus L_7$, we get automatically that $L_7\in \C'$.

Finally, to prove that $\C'$ does not contain any other simple objects, we note that $\C'$ is a finite symmetric tensor category, so it admits a fiber functor to super vector spaces. Thus, the Frobenius-Perron dimension of all objects in $\C'$ must be integer. We then use Lemma \ref{lemma: integral FPdim} to conclude the proof.

\end{proof}

\subsection{De-equivariantization of $\C(G_2, G_2(a_1), 7, q)$}

Let $A=Fun(S_3)$ be the algebra of functions on $S_3$ considered as an object of $\Rep(S_3)=\C'\subset \C$ via the action of $S_3$ by left translations.

Recall that the de-equivariantization of $\C$ with respect to the monoidal action of $\Rep(S_3)$  is the category 
$$
\overline \C = \overline \C(G_2, G_2(a_1), 7, q) = {_A\C},
$$
of $A$-modules in $\C$ (see for example \cite[4.1.4]{DGNO} for definition). We get the de-equivariantization (free module) functor
$$
F: \C\to \overline \C
$$
$$
X\mapsto A\otimes X,
$$
which is a braided tensor functor. We also have its right adjoint, the forgetful functor 
$$
I: \overline \C \to \C.
$$

The action of $S_3$ on $A$ by \textit{right} translations induces the action of $S_3$ on $\overline \C$ by tensor automorphisms $T_g, g\in S_3$. By general theory of equivariantization (see \cite[Proposition 4.19]{DGNO}), $\C$ is equivalent to the category $\overline \C^{S_3}$ of $S_3$-equivariant objects in $\overline \C$. Under this equivalence, functor $F$ is identified with the forgetful functor $ \overline\C^{S_3}\to \overline\C$, and functor $I$, as its right adjoint, is identified with the induction functor $X \mapsto \bigoplus_{g\in S_3} T_g(X)$. 

We get the following factorization
\begin{equation} \label{eq: FPdim of de-equiv}
 \FP (\C) = |S_3| \cdot \FP (\overline \C) = 6\cdot \FP ( \overline\C)   
\end{equation}
(see \cite[Proposition 4.26]{DGNO}).

Let us describe in more detail the module structure of $\C$ over $\Rep(S_3)$. Let us identify $L_4$ with $V$, the $2$-dimensional simple representation of $S_3$, and $L_7$ with $sgn$, the sign representation of $S_3$. The computation of matrices of multiplication by $L_4$ and $L_7$ shows the following
\begin{lemma}
    \label{lemma: Rep(S_3)-module structure}
    (1) Let $P$ be an indecomposable projective object of $\C$ with simple head $L$, and let $X$ be any object of $\Rep(S_3)\subset \C$.  Then $X\otimes L$ is semisimple, and is precisely the head of $X\otimes P$.
    
     (2) Every simple objects $L$ of $\C$ satisfies exactly one of the three following properties:
    \begin{itemize}
        \item[(a)] $L, V\otimes L, sgn\otimes L$  are simple, pairwise non-isomorphic objects of $\C$. There are $10$ such simple objects in $\C$, and they split into $5$ pairs of the form $\{L,sgn\otimes L\}$. In particular, $4$ of these objects are in the principal block. Namely, pairs $\{L_0, L_7\}$ and $\{L_1, L_6\}$.

        If $P$ is the projective cover of $L$ then $V\otimes P, sgn\otimes P$ are projective covers of $V\otimes L$ and $sgn\otimes L$ respectively.

        \item[(b)] $L=V\otimes L'$ for some simple $L'$, and so $sgn\otimes L = L,  V\otimes L = V\otimes V\otimes L'  = L'\oplus L\oplus (sgn\otimes L')$. There are $5$ such simples in $\C$, and all of them are of the form $V\otimes L'$, where $L'$ satisfies property (a). Out of the $5$ objects, $2$ are in the principal block: $L_4 = V\otimes L_0,$ and $L_3=V\otimes L_1$. 

        \item[(c)] $sgn\otimes L = L'$ is a simple object, non-isomorphic to $L$, and $V\otimes L = L\oplus L'$. There are $8$ such simple objects, coming in pairs $\{L, sgn\otimes L\}$. Out of them, there are $2$ objects in the principal block: $\{L_2, L_5\}$.
    \end{itemize}
\end{lemma}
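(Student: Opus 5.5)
The argument splits the 23 simple objects into the 8 principal-block simples and the 15 wall simples. For each group I will determine $V\otimes L$ and $sgn\otimes L$ from the multiplication matrices by $L_4=V$ and $L_7=sgn$. These matrices are already available from the computations behind Lemma \ref{lemma:tens_simp_by_proj} and Corollary \ref{cor:tens_simp_by_proj}.

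Part (1) comes first. Since $\Rep(S_3)\subset\C$ is semisimple, it suffices to take $X$ simple, i.e. $X\in\{L_0,L_4,L_7\}$. The functor $X\otimes -$ is exact and sends projectives to projectives. Because $X$ is simple with $X^*\cong X$ (Corollary \ref{cor:self-dual}), we have $\Hom(X\otimes P,L')\cong\Hom(P,X\otimes L')$. Hence the multiplicity of $L'$ in the head of $X\otimes P$ equals the multiplicity of $L$ in the socle of $X\otimes L'$. We also have $\Hom(X\otimes L, L')\cong \Hom(L,X\otimes L')$.

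For wall weights $\lambda$, the object $T(\lambda)=L(\lambda)$ is simple projective by Corollary \ref{cor:on the  wall}. So $X\otimes T(\lambda)$ is projective, and by Lemma \ref{lemma:same orbit} it is a direct sum of $T(\mu)$ with $\mu$ in the same $\tilde W_l$-orbit, hence also wall objects. It is therefore a direct sum of simple projectives, which are their own heads, so (1) is immediate there. In the principal block, (1) is exactly the content of Lemma \ref{lemma:tens_simp_by_proj} together with Corollary \ref{cor:tens_simp_by_proj}. Each equality $X\otimes P_i=\bigoplus P_j$ has the matching equality $X\otimes L_i=\bigoplus L_j$, and the latter is semisimple because there are no extensions among the listed summands.

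For part (2), each simple $L$ generates a $\Rep(S_3)$-submodule category. Since $sgn$ is invertible of order two and $V\otimes V=\be\oplus V\oplus sgn$, the possible behaviours are determined by whether $sgn\otimes L\cong L$ and by the decomposition of $V\otimes L$. There are three options: $V\otimes L$ is a new simple (a), $L$ is itself of the form $V\otimes L'$ (b), or $V\otimes L\cong L\oplus sgn\otimes L$ (c). Mutual exclusivity follows from counting dimensions, using $\FP(V)=2$. In the principal block, (a) holds for $\{L_0,L_7\},\{L_1,L_6\}$ (with $V\otimes L_0=L_4$ and $V\otimes L_1=L_3$), (b) for $L_3,L_4$, and (c) for $\{L_2,L_5\}$. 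All of this is read off directly from Corollary \ref{cor:tens_simp_by_proj}. The projective-cover statement in (a) then follows from (1): $V\otimes P$ is projective, and its head $V\otimes L$ is simple, so $V\otimes P$ is the indecomposable projective cover of $V\otimes L$.

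The 15 wall simples remain. Here I will use the explicit matrices $M(\mu)$ restricted to the orbit-wise blocks, namely the matrices for multiplication by $L_4$ and $L_7$ on $\{T(\lambda)\}_{\lambda\ \text{wall}}$. Inspecting them should show 3 pairs of type (a), 3 objects of type (b) and 6 objects of type (c), i.e. totals $10,5,8$ once the principal block is added. The main obstacle is that this step is purely computational: it must be checked that every wall orbit under $\Rep(S_3)$ has one of the three shapes, with no other patterns such as $V\otimes L=L\oplus L$. A consistency check I would use is $\FP$: the simple objects in a $\Rep(S_3)$-orbit must have $\FP$ ratios $1{:}2{:}1$ in case (a) and $1{:}1$ in case (c). The $\FP$ values computed in the proof of Theorem \ref{main 7}(2) then pin down which case occurs, and I would double-check the counts against $\FP(\C)=6\,\FP(\overline\C)$ from \eqref{eq: FPdim of de-equiv}.
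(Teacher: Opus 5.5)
Your proposal follows the paper's proof. The paper also handles the principal-block simples via Lemma \ref{lemma:tens_simp_by_proj} and Corollary \ref{cor:tens_simp_by_proj}, and the 15 wall simples by direct inspection of the multiplication matrices by $L_4$ and $L_7$.

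One correction: the trichotomy (a)/(b)/(c) is not automatic for a semisimple $\Rep(S_3)$-module category. A simple $L$ can also have stabilizer $\Z/3$, meaning $sgn\otimes L\cong L$ and $V\otimes L\cong L'\oplus L''$ with $L'\not\cong L''$ and neither isomorphic to $L$. It can also have trivial stabilizer, meaning $V\otimes L\cong L\oplus L$. So the wall computation must explicitly rule out both patterns, as you partly anticipate, rather than only the latter.
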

\begin{proof}
    When $L$ is not in the principal block, and thus projective, the result follows from the the computation of the matrix of multiplication of objects $\{T(\mu_i)~|~\mu_i\in P_{A_{sr}}\}$ by $L_4$ and $L_7$ (see the proof of part (3) of Theorem \ref{main 7} in Section \ref{{section: proof of theorem 1.3}}).

    When $L$ is in the principal block, the result follows from Corollary \ref{cor:tens_simp_by_proj}.
\end{proof}

Let us now describe the simple and projective objects of $\overline \C={_A\C}$.

\begin{lemma} \label{lemma:semisimple A-modules}
     Let $\C^{ss}\subset \C$ be the subcategory of semisimple objects. It follows from Lemma \ref{lemma: Rep(S_3)-module structure}, that $\C^{ss}$ is a module category over $\Rep(S_3)\subset \C^{ss}$. 
     
     Then the category $_A\C^{ss}$ of $A$-modules in $\C^{ss}$ is semisimple. 
\end{lemma}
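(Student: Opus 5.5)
The plan is to show that every object of ${_A\C^{ss}}$ is semisimple by exhibiting it as a direct summand of a free module, and then checking that free modules on simple objects are semisimple. First, $\C^{ss}$ is an abelian subcategory closed under $\otimes$ with $\Rep(S_3)$. Indeed, by Lemma \ref{lemma: Rep(S_3)-module structure}(1), $X\otimes L$ is semisimple for $X\in\Rep(S_3)$ and $L$ simple. Since $\C^{ss}$ is itself semisimple, ${_A\C^{ss}}$ is the category of modules over the algebra $A$ in the semisimple $\Rep(S_3)$-module category $\C^{ss}$, and $A=Fun(S_3)$ is a separable (indeed semisimple) commutative algebra in $\Rep(S_3)$.

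The key step is the standard separability argument. Let $M\in{_A\C^{ss}}$ and let $a:A\otimes M\to M$ be the action. There is an $A$-bimodule splitting $\sigma:A\to A\otimes A$ of the multiplication. In $Fun(S_3)$ we take $\sigma(\delta_g)=\delta_g\otimes\delta_g$, which is $S_3$-equivariant for left translations, hence a morphism in $\Rep(S_3)$. Then $(\mathrm{id}_A\otimes a)\circ(\sigma\otimes\mathrm{id}_M)\circ(u\otimes \mathrm{id}_M)$, with $u$ the unit, gives an $A$-linear section of $a:A\otimes M\to M$. So every $A$-module is a direct summand of a free module $A\otimes M$. Moreover, every short exact sequence of $A$-modules in $\C^{ss}$ splits: it splits in $\C^{ss}$, and averaging the splitting with $\sigma$ produces an $A$-linear splitting. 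This already gives semisimplicity of ${_A\C^{ss}}$, provided that morphism spaces are finite-dimensional and objects have finite length. Both hold because the forgetful functor to $\C^{ss}$ is faithful and exact.

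I expect the main obstacle to be bookkeeping rather than theory. One must verify that the averaged splitting really uses only morphisms inside $\C^{ss}$, that is, that $A\otimes M$ stays semisimple; this is exactly where Lemma \ref{lemma: Rep(S_3)-module structure}(1) is needed. One must also check that the sequence of $A$-modules in $\C^{ss}$ is exact in $\C^{ss}$, which holds since kernels and cokernels in $\C$ of maps between semisimple objects are semisimple summands. Once this is done, one can also identify the simple $A$-modules. They are the simple summands of $F(L)=A\otimes L$ for $L$ simple, and by cases (a), (b), (c) of Lemma \ref{lemma: Rep(S_3)-module structure}(2) these have endomorphism algebras of dimension $1$, $2$, or $3$ respectively (stabilizers of order $1$, $2$, $3$ up to conjugacy). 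This count is useful later, but it is not needed for the semisimplicity claim itself.
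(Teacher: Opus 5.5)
Your argument is correct and is essentially the paper's. Both rest on the separability of $A=Fun(S_3)$: the paper concludes that every $M\in{}_A\C^{ss}$ is a summand of the free module $A\otimes M$, which is projective because $\Hom_A(A\otimes M,-)=\Hom_{\C}(M,-)$ and $M$ is projective in $\C^{ss}$, while you average a splitting taken in $\C^{ss}$ directly, which is the same idea phrased via split exact sequences. Your closing aside is not used in the proof, but it is inaccurate: in case (b) one has $A\otimes L\simeq(A\otimes L')^{\oplus 2}$, whose endomorphism algebra is $4$-dimensional, and the stabilizers are all of $S_3$ in cases (a) and (b) and a subgroup of order $2$ in case (c).
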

\begin{proof}
    The proof is analogous to the proof of \cite[Proposition 7.8.30]{EGNO}. Namely, since $A=Fun(S_3)$ is separable (so there is a $A$-$A$-linear splitting $A\to A\otimes A$ of the multiplication map),  any $ M = A\otimes_{A} M \in {_A\C^{ss}}$ is a direct summand of the free module $A\otimes M = A\otimes A\otimes_A M$. Since $M$ is projective in $\C^{ss}$, we get that $A\otimes M$ is projective in ${_A\C^{ss}}$, and hence $M$ is projective in ${_A\C^{ss}}$.
\end{proof}

\begin{lemma} \label{lemma: A-module head}
    If $P$ is a projective object of $\C$ with head $L$ then $A\otimes P$ is a projective object of $\overline \C = {_A\C}$ with head $A\otimes L$.
\end{lemma}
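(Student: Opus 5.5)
The proof splits into two claims. First, $A\otimes P$ is projective in ${_A\C}$. Second, its head is $A\otimes L$; here "head" means the largest semisimple quotient in ${_A\C}$, and we do not assume $A\otimes L$ is simple.

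\emph{Projectivity.} Use the free--forgetful adjunction. For any $M\in{_A\C}$ we have $\Hom_{{_A\C}}(A\otimes P, M)\simeq \Hom_\C(P, I(M))$, naturally in $M$. The forgetful functor $I$ is exact, and $\Hom_\C(P,-)$ is exact because $P$ is projective. Hence $\Hom_{{_A\C}}(A\otimes P,-)$ is exact, so $A\otimes P$ is projective.

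\emph{Head, step 1: $A\otimes L$ is a semisimple quotient.} The functor $A\otimes -$ is exact (the tensor product in $\C$ is exact), so the surjection $P\to L$ gives a surjection $A\otimes P\to A\otimes L$. As an object of $\C$, $A\otimes L$ is $R\otimes L$, where $R=\be\oplus V^{\oplus 2}\oplus sgn$ is the regular representation of $S_3$. By Lemma \ref{lemma: Rep(S_3)-module structure}(1), $X\otimes L$ is semisimple for every $X\in\Rep(S_3)$, so $A\otimes L$ lies in $\C^{ss}$. By Lemma \ref{lemma:semisimple A-modules}, ${_A\C^{ss}}$ is semisimple, so $A\otimes L$ is semisimple as an $A$-module. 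To see that this semisimplicity holds in ${_A\C}$ and not only in ${_A\C^{ss}}$: ${_A\C^{ss}}$ is a full subcategory closed under subobjects, because a subobject of a semisimple object of $\C$ is semisimple. So every $A$-submodule of $A\otimes L$ lies in ${_A\C^{ss}}$ and is a direct summand there, hence also in ${_A\C}$.

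\emph{Head, step 2: this quotient is maximal.} Let $K=\ker(P\to L)=\mathrm{rad}(P)$. By exactness, $A\otimes K=\ker(A\otimes P\to A\otimes L)$. It suffices to show that every map from $A\otimes P$ to a simple $A$-module $N$ kills $A\otimes K$. By adjunction, such a map corresponds to a map $f\colon P\to I(N)$ in $\C$, and the restriction to $A\otimes K$ corresponds to $f|_K$. So I must show that $f$ factors through $L$, that is, $f(K)=0$.

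The key input is that $I(N)$ is semisimple in $\C$. Indeed, $N$ is a direct summand of $A\otimes I(N)$ via the multiplication map, split because $A$ is separable (as in the proof of Lemma \ref{lemma:semisimple A-modules}). Then use the equivariantization picture: $F$ is identified with the forgetful functor $\overline\C^{S_3}\to\overline\C$, and $I$ with $X\mapsto\bigoplus_g T_g(X)$. Each $T_g$ is an autoequivalence, so $T_g(N)$ is simple. Thus $I(N)=\bigoplus_g T_g(N)$, viewed with its equivariant structure, is the image of the semisimple object $\bigoplus_g T_g(N)$ under induction. Induction to equivariant objects preserves semisimplicity in characteristic $0$, since it is a direct summand of $R\otimes-$ applied to semisimple objects, and $\Rep(S_3)$ is semisimple. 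Hence $I(N)$ is semisimple.

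Once $I(N)$ is semisimple, $f(K)=f(\mathrm{rad}\,P)\subset\mathrm{rad}(I(N))=0$, so $f$ factors through $L$. Therefore $A\otimes K$ is the radical of $A\otimes P$, and the head of $A\otimes P$ is $A\otimes L$.

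The main obstacle is this semisimplicity of $I(N)$ for simple $N$. If the equivariantization argument proves awkward, I would instead argue via $A\otimes I(N)\to N$: the object $I(N)$ is a subobject of $I(A\otimes I(N))=R\otimes I(N)$, so I would try to show directly, using the $\Rep(S_3)$-stability in Lemma \ref{lemma: Rep(S_3)-module structure}, that $\mathrm{soc}(I(N))$ is an $A$-submodule and hence equals $I(N)$ by simplicity of $N$.
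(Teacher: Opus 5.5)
Your proof is correct, but step 2 takes a different route from the paper. The paper also starts from the semisimple quotient $A\otimes P\to A\otimes L$ in ${_A\C}$ and writes the ${_A\C}$-head as $A\otimes L\oplus M$. It then uses the \emph{head} part of Lemma~\ref{lemma: Rep(S_3)-module structure}(1): $A\otimes L$ is already the head of $A\otimes P$ in $\C$, and the $\C$-head of any quotient of $A\otimes P$ is a quotient of $A\otimes L$. So the $\C$-head of $M$ is zero, and $M=0$. You instead use the free--forgetful adjunction to reduce to the claim that $I(N)$ is semisimple in $\C$ for every simple $A$-module $N$, and then kill $A\otimes\mathrm{rad}(P)$ because morphisms send radicals to radicals. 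The paper's argument is shorter, given the computed head statement. Yours needs only the semisimplicity part of that lemma, and it makes the projectivity of $A\otimes P$ explicit (the paper takes it for granted). It also gives the useful fact that simple $A$-modules are semisimple in $\C$. That fact yields Corollary~\ref{cor: simple A-modules} without projective covers: a simple $L\subseteq I(N)$ gives a nonzero map $A\otimes L\to N$, which is a split surjection by your step 1.

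One step should be tightened. Your main-line reason for the semisimplicity of $I(N)$ is that induction is ``a direct summand of $R\otimes-$ applied to semisimple objects''. As written this is not a proof. To realize $I(N)$ as a summand of $R\otimes Y=I(A\otimes Y)$ with $Y$ semisimple, you need $N$ to be a summand of $A\otimes Y$, and taking $Y=I(N)$ is circular. In the equivariant picture the right justification is averaging. An equivariant subobject of an object whose underlying object in $\overline\C$ is semisimple has an equivariant complement, obtained by averaging a retraction over $S_3$. Alternatively, take $Y=L$ as in the corollary argument above.

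Your fallback is already complete and is the cleanest. $A\otimes\mathrm{soc}(I(N))$ is semisimple by Lemma~\ref{lemma: Rep(S_3)-module structure}(1), so the action map sends it into $\mathrm{soc}(I(N))$. The socle is therefore a nonzero $A$-submodule of the simple module $N$, hence equal to $I(N)$. You should lead with that argument.
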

\begin{proof}
    By Lemma \ref{lemma:semisimple A-modules}, $A\otimes L$ is a semisimple $A$-module. It follows that the head of $A\otimes P$ in ${_A\C}$ is $A\otimes L\oplus M$ for some $A$-module $M$. However,  Lemma \ref{lemma: Rep(S_3)-module structure} implies that $A\otimes L$ is the head of $A\otimes P$ in $\C$, and thus the head of $M$ in $\C$ is zero, and therefore $M$ is zero.  
\end{proof}

\begin{corollary}\label{cor: simple A-modules}
    Every simple $A$-module is a summand of $A\otimes L$ for some simple object $L\in\C$.
\end{corollary}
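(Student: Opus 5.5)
Let $M$ be a simple object of $\overline\C={_A\C}$. I would produce a nonzero map $A\otimes L\to M$ with $L\in\C$ simple, and then use the semisimplicity of $A\otimes L$ to split off $M$.

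First, $I(M)$ is a nonzero object of $\C$, since $I$ is forgetful. Pick a simple subobject $L\subset I(M)$. The free module functor $F$ is left adjoint to $I$, so
$$\Hom_{\overline\C}(A\otimes L, M)\simeq \Hom_\C(L, I(M)).$$
Under this adjunction the inclusion $L\hookrightarrow I(M)$ corresponds to a nonzero morphism of $A$-modules $f:A\otimes L\to M$. Explicitly, $f$ is the composition $A\otimes L\to A\otimes M\to M$ of the inclusion with the action map. This composition is nonzero because precomposing it with the unit $L\to A\otimes L$ recovers the inclusion. Since $M$ is simple, $f$ is surjective.

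Next, $L$ is semisimple as an object of $\C$, so $L\in\C^{ss}$. Also $A\otimes L$ lies in $\C^{ss}$, because $A$ is a sum of objects of $\Rep(S_3)$ and by Lemma \ref{lemma: Rep(S_3)-module structure}(1) the product $X\otimes L$ is semisimple for every $X\in\Rep(S_3)$. Hence $A\otimes L$ is an object of ${_A\C^{ss}}$, which is a semisimple category by Lemma \ref{lemma:semisimple A-modules}. So $A\otimes L$ is a finite direct sum of simple objects of ${_A\C^{ss}}$.

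I still need to check that these summands are simple in ${_A\C}$ and not only in ${_A\C^{ss}}$. Let $N$ be one of them and let $N'\subset N$ be a nonzero $A$-submodule in ${_A\C}$. Then $N'$, as an object of $\C$, is a subobject of the semisimple object $N$, so it is itself semisimple. Therefore $N'\in{_A\C^{ss}}$, and simplicity of $N$ there forces $N'=N$. It follows that $A\otimes L=\bigoplus_k N_k$ with each $N_k$ simple in ${_A\C}$.

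Finally, the surjection $f$ restricted to each $N_k$ is either zero or an isomorphism onto $M$, by Schur's lemma. Since $f\neq 0$, some $N_k$ is isomorphic to $M$, so $M$ is a direct summand of $A\otimes L$.

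The only delicate step is the passage from semisimplicity inside ${_A\C^{ss}}$ to simplicity in ${_A\C}$. This works because the subcategory of semisimple objects of $\C$ is closed under taking subobjects.
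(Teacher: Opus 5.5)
Your argument is correct, but it takes a different route from the paper's. The paper works from the top. It takes the projective cover $Z$ of the simple module $Y$ in ${_A\C}$ and realizes $Z$ as a direct summand of a free module $A\otimes P$ with $P$ projective in $\C$. It then reads off $Y$ from the head of $A\otimes P$, which by Lemma \ref{lemma: A-module head} is $A\otimes(\text{head of }P)$.

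You work from below. You take a simple subobject $L\subset I(M)$ and use the free--forgetful adjunction to get a nonzero map $A\otimes L\to M$. You then split off $M$ using only that $A\otimes L$ is a semisimple $A$-module, which follows from Lemma \ref{lemma: Rep(S_3)-module structure}(1) together with Lemma \ref{lemma:semisimple A-modules}.

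Your route is shorter and more general. It avoids projective covers in ${_A\C}$ and Lemma \ref{lemma: A-module head} altogether, and it works for any algebra $A$ for which every $A\otimes L$ is a semisimple $A$-module. It also shows that every simple subobject of $I(M)$ can serve as $L$. You make explicit the easy fact that simple objects of ${_A\C^{ss}}$ stay simple in ${_A\C}$, which the paper uses without comment.

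The paper's route ties the simple $A$-modules to the projective objects $A\otimes P$. That is exactly the information used right afterwards in Corollary \ref{cor: forgetful} and in computing the Cartan matrix of the principal block of $\overline\C$.
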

\begin{proof}
    Let $Y$ be a simple $A$-module with projective cover $Z\in{_A\C}$. Let $P\in \C$ be a projective object with an epimorphism $p:P\to Z$. Then the $A$-linear extension $A\otimes P\to Z$ of $p$ is an epimorphism of projective $A$-modules, and thus $Z$ is a direct summand in $A\otimes P$. It follows that $Y$ is a direct summand of $A\otimes L$, where $L$ is some simple component of the head of $P$.
\end{proof}

\begin{lemma} \label{lemma: forgetful}
(1)    Let $L$ be a simple object of $\C$. Suppose $A\otimes L = Y_1\oplus\ldots Y_k$, where $Y_i$ are simple, pairwise non-isomorphic $A$-modules. Then
    $$
    I(Y_i)  = I(Y_j) \text{ for all }i,j,
    $$
    where $I:{_A\C}\to \C$ is the forgetful functor.

(2) For each $i$, the $S_3$-orbit of $Y_i$ (under the action by $T_g, g\in S_3$) is the set $\{Y_1,\ldots, Y_k\}$.
\end{lemma}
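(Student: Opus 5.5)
The plan is to use the equivariantization picture recalled above: $\C\simeq\overline\C^{S_3}$, with $F$ the forgetful functor and $I$ identified with induction $Y\mapsto\bigoplus_{g\in S_3}T_g(Y)$. Under these identifications $A\otimes L = F(L)$ for $L\in\C$, and $I(F(L))\simeq A\otimes L$ as objects of $\C$.

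First, the $S_3$-action by $T_g$ permutes the summands $Y_1,\dots,Y_k$ of $A\otimes L$. Indeed, $F(L)$ carries an equivariant structure, so $T_g(F(L))\simeq F(L)$ for every $g$. Each $T_g$ is an autoequivalence, so it sends simple summands to simple summands. Since the $Y_i$ are pairwise non-isomorphic, Krull--Schmidt shows that $T_g$ permutes the set $\{Y_1,\dots,Y_k\}$.

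Next we show this permutation action is transitive, which gives (2). Suppose $\{Y_1,\dots,Y_k\}$ splits into at least two $S_3$-orbits $O_1,O_2,\dots$. Then $\bigoplus_{Y\in O_j}Y$ is an $S_3$-stable subobject of $F(L)$. The equivariant structure on $F(L)$ restricts to each such sum, because the isomorphisms $u_g:T_g(F(L))\to F(L)$ must map $T_g(\bigoplus_{O_j}Y)$ onto the isotypic components lying in $O_j$; here the $Y_i$ are pairwise non-isomorphic simples, so Hom between distinct ones vanishes. This produces a nontrivial decomposition of $L$ in $\overline\C^{S_3}\simeq\C$, contradicting simplicity of $L$. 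Hence the orbit of each $Y_i$ is the whole set $\{Y_1,\dots,Y_k\}$.

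Then (1) follows formally: if $Y_j\simeq T_g(Y_i)$, then
$$I(Y_j)\simeq\bigoplus_{h\in S_3}T_hT_g(Y_i)\simeq\bigoplus_{h\in S_3}T_{hg}(Y_i)\simeq I(Y_i),$$
using the tensor-action isomorphisms $T_h\circ T_g\simeq T_{hg}$. Alternatively, one can argue via $I(Y)=Y$ regarded in $\C$ with $A$-module structure forgotten: $T_g$ twists the $A$-action by right translation, so the underlying object in $\C$ is unchanged. This also gives $I(Y_i)\simeq I(Y_j)$, and in this form one sees directly that $I$ is the forgetful functor.

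The main obstacle is the transitivity step: making precise that an $S_3$-stable sub-sum of summands of $F(L)$ inherits an equivariant structure. For this I would use that $\Hom(Y_a,Y_b)=0$ for $a\ne b$ and $\End(Y_a)=\BC$, so that $u_g$ is block-diagonal with respect to the orbit decomposition. If this is awkward, an alternative is adjunction: $\Hom_\C(L,I(Y_i))=\Hom_{\overline\C}(F(L),Y_i)\neq0$, so $L\subset I(Y_i)$. Summing over the orbits, $I(F(L))=A\otimes L$ would then contain $L$ at least as many times as there are orbits. One would compare this with $\Hom(L,A\otimes L)=\Hom(A^*\otimes L,L)$, computed from the $\Rep(S_3)$-module structure of Lemma~\ref{lemma: Rep(S_3)-module structure}, but that count may not force a single orbit, so the equivariant argument is preferable.
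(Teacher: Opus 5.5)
Your proof is correct, but it runs in the opposite direction from the paper's.

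\textbf{The paper's route.} The paper proves (1) first, by a multiplicity count. For every simple $L'\in\C$ it uses adjunction, $\Hom_\C(L',I(Y_i))=\Hom_A(A\otimes L',Y_i)$. It then uses the explicit description of $sgn\otimes L$ and $V\otimes L$ from Lemma~\ref{lemma: Rep(S_3)-module structure}: the only relevant $L'$ have $A\otimes L'\simeq A\otimes L$ or $A\otimes L'\simeq (A\otimes L)^{\oplus 2}$. From this it concludes that these Hom dimensions do not depend on $i$. Semisimplicity of $A\otimes L$, hence of $I(Y_i)$, then gives $I(Y_i)\simeq I(Y_j)$. Part (2) is read off afterwards from (1), using the identification of $I$ with induction $Y\mapsto\bigoplus_g T_g(Y)$.

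\textbf{Your route.} You prove (2) first, by the standard Clifford-theory argument in $\overline\C^{S_3}\simeq\C$. The orbit sums in the multiplicity-free semisimple object $F(L)$ are isotypic, hence canonical, subobjects. So each $u_g$ is block diagonal, each orbit sum is an equivariant direct summand, and simplicity of $L$ forces a single orbit. You then get (1) almost for free. The cleanest version is your remark that $T_g$ only twists the $A$-action by the right-translation automorphism of $A$, so $I\circ T_g=I$ on the nose. This also avoids having to check that the reindexing isomorphism $\bigoplus_h T_hT_g(Y_i)\simeq\bigoplus_h T_{hg}(Y_i)$ respects the equivariant structures. That isomorphism by itself only identifies the underlying objects in $\overline\C$.

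\textbf{Comparison.} Your argument is more conceptual and more general. It works for any finite group and any simple $L$ with $F(L)$ semisimple, and it uses no case-by-case information about how $V$ and $sgn$ act on simples. The paper's argument relies on the computed $\Rep(S_3)$-module structure, but in exchange its proof of (1) needs no manipulation of equivariant structures at all.
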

\begin{proof}
  (1)  We have $\Hom_\C(L', I(Y_i)) = \Hom_A(A\otimes L', Y_i)$ for any simple object $L'\in\C$ and any $i=1,\ldots, k$. 

    On the other hand, by Lemma \ref{lemma: Rep(S_3)-module structure}, $\Hom_A( A\otimes L', A\otimes L) = \Hom_\C(L', A\otimes L) \neq 0$ only if $L'=sgn\otimes L$ or $L'=V\otimes L$. In the former case, $A\otimes L' = A\otimes L$, and in the latter $A\otimes L'=(A\otimes L)^{\oplus 2}$. In both cases, the dimensions of $\Hom_A(A\otimes L', Y_i)$ are the same for all $i$. 

    Since $A\otimes L$, and hence $I(Y_i)$, is semisimple, this ends the proof.

    (2) This statement follows from the identification of $I$ with the induction functor $\overline \C \to \overline\C^{S_3}$ and part (1).
    
\end{proof}

\begin{corollary} \label{cor: forgetful}
    Let $A\otimes L = Y_1\oplus\ldots \oplus Y_k$ be as in Lemma \ref{lemma: forgetful}. Let $Z_i$ be the projective cover of $Y_i$ in ${_A\C}$. Then $I(Z_i)=I(Z_j)$ for all $i,j$.
\end{corollary}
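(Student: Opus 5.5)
Since $I(Y_i)=I(Y_j)$ by Lemma \ref{lemma: forgetful}(1), the cleanest approach is to lift this to projective covers using the $S_3$-action. By Lemma \ref{lemma: forgetful}(2), for any $i,j$ there is $g\in S_3$ with $T_g(Y_i)\simeq Y_j$. Since $T_g$ is a tensor autoequivalence of $\overline\C={_A\C}$ (in particular an exact autoequivalence of abelian categories), it sends projective covers to projective covers. Hence $T_g(Z_i)\simeq Z_j$.

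It then remains to show that $I\circ T_g\simeq I$ for every $g\in S_3$. Here we use the identification recalled above (\cite[Proposition 4.19]{DGNO}): under $\C\simeq\overline\C^{S_3}$, the functor $I$ corresponds to induction $X\mapsto\bigoplus_{h\in S_3}T_h(X)$ with its canonical equivariant structure. Then
$$I(T_g(X))=\bigoplus_{h}T_hT_g(X)\simeq\bigoplus_h T_{hg}(X),$$
and reindexing $h\mapsto hg^{-1}$ gives an isomorphism with $I(X)$ compatible with the equivariant structures, i.e. an isomorphism in $\C$.

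A more concrete way to see this: $T_g$ is the twist of the $A$-module structure by the right-translation automorphism of the algebra $A=Fun(S_3)$. Twisting by an algebra automorphism does not change the underlying object of $\C$, so $I\circ T_g=I$ on the nose.

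Combining the two steps gives $I(Z_j)\simeq I(T_g(Z_i))\simeq I(Z_i)$. The only delicate point is checking that $T_g$ genuinely acts by twisting the module structure, so that forgetting it is literally invariant. This follows from the definition of the $S_3$-action via right translations on $A$, which commute with the left-translation $S_3$-structure making $A$ an algebra in $\Rep(S_3)\subset\C$.
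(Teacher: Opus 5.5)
Your proof is correct, but it takes a different route from the paper's.

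\textbf{The paper's route.} The paper works through projective covers in $\C$. By Lemma \ref{lemma: Rep(S_3)-module structure}(1) and Lemma \ref{lemma: A-module head}, $A\otimes P=Z_1\oplus\cdots\oplus Z_k$ is a projective cover of $A\otimes L=Y_1\oplus\cdots\oplus Y_k$ both in ${_A\C}$ and in $\C$. Hence each $I(Z_i)$ is the projective cover of $I(Y_i)$ in $\C$. Part (1) of Lemma \ref{lemma: forgetful} ($I(Y_i)\simeq I(Y_j)$), together with uniqueness of projective covers, then finishes the argument.

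\textbf{Your route.} You instead use the orbit statement, part (2), to get $T_g(Z_i)\simeq Z_j$, since autoequivalences preserve projective covers. You then use the invariance $I\circ T_g=I$. Your concrete justification of this is the cleaner of your two: $T_g$ twists the $A$-action by the right-translation algebra automorphism of $A=Fun(S_3)$, which is a morphism in $\Rep(S_3)$ because right and left translations commute, so the underlying object in $\C$ is unchanged. This is exactly how the paper defines $T_g$.

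\textbf{Comparison.} Your argument is purely formal. It never uses the computationally derived fact that $A\otimes L$ is the head of $A\otimes P$ in $\C$, so it would work for any de-equivariantization. It also gives the stronger conclusion that $S_3$ permutes the $Z_i$ transitively. The paper's argument, in exchange, needs only part (1) of Lemma \ref{lemma: forgetful}, not the orbit statement, whose own proof goes through the induction-functor identification. It also identifies $I(Z_i)$ concretely as the projective cover of $I(Y_i)$ in $\C$.
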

\begin{proof}
    Note that $A\otimes P$ is the projective cover of $A\otimes L$ both in $\C$ and in ${_A\C}$ (see Lemmas \ref{lemma: Rep(S_3)-module structure}(1) and \ref{lemma: A-module head}). Thus $I(Z_i)$ is the projective cover of $I(Y_i)$ in $\C$. The statement then follows from Lemma \ref{lemma: forgetful}.
\end{proof}

\subsection{Proof of Theorem \ref{main 7mod}}
Let us recall the statement of Theorem \ref{main 7mod}:
\begin{itemize}
\item[(1)] The category $\bar \C(G_2,G_2(a_1),7,q)$ has 12 trivial blocks and one block of type $\tilde D_4$.
In particular it has 17 simple objects.

\item[(2)] We have $\FP(\bar \C(G_2,G_2(a_1),7,q))=49(7+15[3]_7+12[5]_7)\approx 3054.068811$.

\item[(3)] The category $\bar \C(G_2,G_2(a_1),7,q)$ has stable Chevalley property.

\item[(4)] The category $\bar \C(G_2,G_2(a_1),7,q)$ is completely anisotropic: it has no non-trivial
commutative exact algebras.
\end{itemize}
\begin{proof}
    (1) Recall that $A=Fun(S_3)$, so $A =\be \oplus V^{\oplus 2}\oplus sgn = L_0\oplus L_4^{\oplus 2} \oplus L_7$ as an object of $\Rep(S_3)\subset \C$. We have the following isomorphisms of $A$-modules in $\Rep(S_3)$, and hence in $\C$:
    $$
    A\otimes sgn  = A,~~A\otimes V = A\oplus A.
    $$

    Let $L$ be a simple object of $\C$. Then $$
    \End_A (A\otimes L) = \Hom_{\C}(L,A\otimes L)=
    $$
    $$
    =\Hom_{\C}(L, L\oplus (V\otimes L)^{\oplus 2} \oplus (sgn\otimes L)).
    $$

    If $L$ satisfies property (a) of Lemma \ref{lemma: Rep(S_3)-module structure}, part(1), then 
    $$
    \dim \End_A(A\otimes L) = 1,
    $$
    so, by Lemma \ref{lemma:semisimple A-modules}, $Y(L):=A\otimes L \in {_A\C^{ss}}$ is irreducible. Note also that $A\otimes L$ and $A\otimes L'$ are isomorphic as $A$-modules if and only if $L=\epsilon\otimes L'$ for some $1$-dimensional representation of $S_3$, i.e. for $L=L'$, or for $L=sgn\otimes L'$. Therefore, we get $5$ isomorphism classes of simple objects in $\overline \C$ coming from such simple objects $L\in\C$.

    If $L$ satisfies property (b), so that $L=V\otimes L'$, then $A\otimes L = A\otimes V\otimes L' = (A\oplus A)\otimes L'  = (A\otimes L')^{\oplus 2}$.

    Finally, if $L$ satisfies property (c), we get 
    $$
    \dim\End_A(A\otimes L)=3,
    $$
    so, by Lemma \ref{lemma:semisimple A-modules},  $Y(L)=A\otimes L$ is the direct sum of three pairwise non-isomorphic simple modules $Y_1(L),Y_2(L),Y_3(L)$.

    Recall that there are $4$ pairs $\{L,sgn\otimes L\}$ of simple modules satisfying (c), and that $A\otimes L = A\otimes sgn\otimes L$.    
    Since $\Hom_A(A\otimes L, A\otimes L')=\Hom_\C(L, A\otimes L')$ is nonzero only if $L'=L$ or $L'=sgn\otimes L$, we get $3\cdot 4=12$ isomorphism classes of simple $A$-modules this way. 

    By Corollary \ref{cor: simple A-modules}, all simple objects of $\overline \C$ are constructed this way. Thus, we get $5+12=17$ isomorphism classes of simple objects in $\C$.

    Now, let us describe the blocks in $\C$.

    Note first, that if $P$ is a simple projective object of $\C$ then $Y(P)=A\otimes P$ is projective and semisimple in $\overline \C$, and thus its simple summands lie in (distinct) trivial blocks of $\overline \C$. By Lemma \ref{lemma: Rep(S_3)-module structure}, we get $(5-2)+3\cdot (4-1) = 12$ such  projective simple objects in $\overline \C$.

    Now, let us describe what happens for objects in the principal block. We get simple $A$-modules $Y(L_0)=A=\be_{\overline\C}, Y(L_1),$ and $Y_i(L_2)$ for $i=1,2,3$. Let us denote their projective covers by $Z(L_0), Z(L_1),$ and $Z_i(L_2)$ respectively. We have, by Lemma \ref{lemma: A-module head},  $Z(L_0)=A\otimes P_0, Z(L_1)=A\otimes P_1$, and $Z_1(L_2)\oplus Z_2(L_2)\oplus Z_3(L_2) = A\otimes P_2$.

    For every $A$-module $M\in \overline \C$, let $[M]$ denote its class in the Grothendieck ring of $\overline \C = {_A\C}$.
    Combining the above with decomposition \ref{eq:proj_and_simple}, we get
    $$
    [Z(L_0)] = 2[Y(L_0)]+[Y(L_1)],
    $$
    $$
    [Z(L_1)] = [Y(L_0)]+2[Y(L_1)]+\sum_{i=1}^3 [Y_i(L_2)],
    $$
    $$
    [A\otimes P_2] = [Y(L_1)]+2[Y(L_2)]+[A\otimes L_3] = [Y(L_1)]+\sum_{i=1}^3 2[Y_i(L_2)]+2[Y(L_1)],
    $$
    since $A\otimes L_3 = (A\otimes L_1)^{\oplus 2}$. Using Corollary \ref{cor: forgetful} and Lemma \ref{projubf}, we get
    $$
    [Z_i(L_2)]  =[Y(L_1)] + 2[Y(L_2)].
    $$

    We conclude that the principal block of $\overline \C$ has type $\tilde D_4$ (see Figure \ref{fig:D_4}, compare with Figure \ref{fig:E_7 simples}).
    
\begin{figure}[htbp]
  \centering
  \begin{tikzpicture}[scale=1.2, every node/.style={font=\small}]

\draw[black, thick] (8,0) -- (10,0);
\draw[black, thick] (9,0) -- (10,1);
\draw[black, thick] (9,0) -- (10,-1);

\node[draw, circle, fill=yellow, inner sep=1.5pt] at (8,0) {};

\node at (7.2,0) {$\be = Y(L_0)$};

\node[draw, circle, fill=blue, inner sep=1.5pt] at (10,0) {};
\node at (10.5,0) {$Y_2(L_2)$};

\node[draw, circle, fill=blue, inner sep=1.5pt] at (10,1) {};
\node at (10.5,1.3) {$Y_1(L_2)$};

\node[draw, circle, fill=blue, inner sep=1.5pt] at (10,-1) {};
\node at (10.5,-1.3) {$Y_3(L_2)$};

\node[draw, circle, fill=red, inner sep=1.5pt] at (9,0) {};
\node at (8.7,0.3) {$Y(L_1)$};
 
\end{tikzpicture}
  \caption{Graph $X$ for the principal block of $\overline \C$}
  \label{fig:D_4}
\end{figure}

(2) The formula follows from part (2) of Theorem \ref{main 7} and Equation \ref{eq: FPdim of de-equiv}.

(3) Recall that by part (3) of Theorem \ref{main 7}, the tensor product $L\otimes L'$ of two simple objects in $\C$ is the direct sum of a semisimple object with a projective object.

By Lemma \ref{lemma:semisimple A-modules}, $A\otimes L$ is semisimple in $\overline C$ if $L$ is simple in $\C$, and for $P\in \C$ projective, $A\otimes P$ is projective in $\overline \C$. We get that  $(A\otimes L)\otimes_A (A\otimes L') = A\otimes(L\otimes L')$ is the direct sum of a semisimple and a projective $A$-module.  

Since, by Corollary \ref{cor: simple A-modules}, all simple objects of $\overline \C$ are direct summands of $A\otimes L$ for $L\in \C$ simple, we get the desired statement. 
    
(4) The proof of part (4) relies heavily on computations of Frobenius-Perron dimensions of objects in $\overline \C$. Note that the de-equivariantization functor $F: \C \to \overline \C$ is a tensor functor, and thus preserves the Frobenius-Perron dimensions and commutes with the twist morphisms. By part (2) of Lemma \ref{lemma: forgetful}, if $L\in\C$ is simple, and $F(L)=Y_1\oplus \ldots Y_k$ is the decomposition into simple summands, then $\FP(Y_i) = \frac{1}{k} \FP (L)$. 

The above, together with the classification of all simple objects in $\overline \C$ and the computation of Frobenius-Perron dimensions of objects in $\C$ leads to the following result.

\begin{lemma} \label{lemma: de-eq dims and twists}
    (1) The only simple object of $\overline \C$ with integral Frobenius-Perron dimension is $\be$. Thus, $\overline \C$ has no nontrivial  invertible objects and the M\"uger center $\overline\C'$ of $\overline \C$ is trivial, i.e. $\overline \C$ is non-degenerate. 

    (2) The only simple objects of $\overline \C$ on which the twist $\theta$ acts by identity are $Y(L_0), Y(L_1), Y_1(L_2), Y_2(L_2), Y_3(L_2)$, that is, the simple objects in the principal block of $\overline\C$.

    (3) Let $\alpha = [3]_7+[5]_7.
    $ The Frobenius-Perron dimensions of these simple objects are the following
    $$
    \FP(Y(L_0))=\FP(\be) = 1,
    $$
    $$
    \FP(Y(L_1)) = 2+3\alpha,
    $$
    $$
    \FP(Y_i(L_2))= \alpha,~~ \text{ for } i =1,2,3.
    $$
\end{lemma}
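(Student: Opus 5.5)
The plan is to derive everything from the data already computed for $\C$, using that the de-equivariantization functor $F$ is a braided tensor functor. Since $F$ preserves Frobenius-Perron dimensions, the classification of simples in $\overline \C$ from the proof of part (1) applies directly. Each simple is either $Y(L)=A\otimes L$ with $L$ of type (a) in Lemma \ref{lemma: Rep(S_3)-module structure}, so $\FP(Y(L))=\FP(L)$. Or it is one of $Y_i(L)$, $i=1,2,3$, with $L$ of type (c), so $\FP(Y_i(L))=\FP(L)/3$ by Lemma \ref{lemma: forgetful}(2). Hence the $\FP$-dimensions of all 17 simples follow from those of the simples of $\C$, computed in the proof of Theorem \ref{main 7}(2). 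For part (3), I would compute $\FP(L_1)$ and $\FP(L_2)$ from the relations \eqref{eq:proj_and_simple}. I would invert the triangular system starting from $\FP(L_0)=1$ and $\FP(P_0)=f_{\lambda_0}(\FP T(\omega_1),\FP T(\omega_2))$. Then I would express the results in $\Z[[3]_7,[5]_7]$ using $[2]_7=[5]_7$, $[3]_7=[4]_7$ and the relations of the degree 3 field $\mathbb Q(\cos(\pi/7))$. The expected outcome is $\FP(L_1)=2+3\alpha$ and $\FP(L_2)=3\alpha$. A useful check is $\FP(P_0)=2+\FP(L_1)$.

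For part (1), I would run through the list of $\FP$-dimensions and check which are rational integers. In $\C$, Lemma \ref{lemma: integral FPdim} says only $L_0,L_4,L_7$ have integral dimension. These simples become $\be$ (namely $Y(L_0)=Y(L_7)$) and $Y(L_0)^{\oplus 2}$ in $\overline\C$. The remaining simples have irrational dimension $d$, and dividing by $3$ in case (c) cannot produce an integer. An invertible object has $\FP=1$, so the only invertible object is $\be$. For the M\"uger center, $\overline\C'$ is a finite symmetric tensor category. As in the proof of Theorem \ref{main 7}(4), it admits a fiber functor to super vector spaces, so all its simples have integral $\FP$. Therefore $\overline\C'=\mathrm{Vec}$, and $\overline\C$ is non-degenerate.

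For part (2), since $F$ commutes with twists, $\theta_{Y}$ for $Y$ a summand of $F(L)$ is determined by $\theta_L$. The twist of a simple $L$ in $\C$ is computed as follows. For $L$ on a wall, $L=T(\lambda)$ and $\theta_L=\theta_\lambda=q^{\langle\lambda,\lambda+2\rho\rangle}$ by property (3) of Section \ref{qgtitling}. For $L$ in the principal block, the twist is a scalar constant on the block, hence equal to $\theta_{\be}=1$. Part (2) thus reduces to checking, for the finitely many wall weights $\lambda\in P_{A_{sr}}$, that $\langle\lambda,\lambda+2\rho\rangle\not\equiv 0$ modulo the order of $q$. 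This needs care for $q$ of order $7$ versus $14$, working modulo $14$ via the normalization $\langle\alpha,\alpha\rangle=2$. It also suffices to check one representative per $\Rep(S_3)$-orbit, by Lemma \ref{lemma:same orbit}.

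The main obstacle is the explicit arithmetic. Part (3) requires the closed form of $\FP(L_1)$ and $\FP(L_2)$ in terms of quantum numbers, and part (2) requires the list of the 15 wall weights with their values $\langle\lambda,\lambda+2\rho\rangle$. I would read these off the computer data of \cite{OU}, or from Figure \ref{fig:weights_in_cell}.
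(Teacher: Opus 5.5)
Your proposal is correct and follows the paper's own proof. Dimensions of simples in $\overline\C$ are read off from those in $\C$ via $F$: equal for $Y(L)$, and divided by $3$ for the summands $Y_i(L)$, which form one $S_3$-orbit. The twist is trivial on the principal block, while on wall simples it is the scalar $\theta_\lambda=q^{\langle\lambda,\lambda+2\rho\rangle}$, checked on orbit representatives.

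The only difference is cosmetic: you pick representatives of $\Rep(S_3)$-orbits, whereas the paper uses the six $\tilde W_l$ dot-orbit representatives $\omega_1,\omega_2,3\omega_1,2\omega_2,2\omega_1+\omega_2,4\omega_1$. Their exponents $12,24,48,60,64,72$ are all prime to $7$, so $\theta_\lambda\neq 1$ whether $q$ has order $7$ or $14$.
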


\begin{proof}
    For (1) and (3) we use our computation of $\FP(L)$ for simple objects $L\in \C$. For each simple object $Y\in \overline \C$, we get $\FP(Y)=\FP(L)$ if $Y=A\otimes L$, and $\FP(Y)=\frac{1}{3}\FP(L)$ if $Y$ is a proper summand in $A\otimes L$.

    For (2), note first that $\theta$ acts by identity on all simple objects in the principal block. We then use formula \ref{twist} to compute the action of $\theta$ on $T(\lambda)\in \C$ outside of principal block. Since we know that the action of $\theta$ on $T(\lambda)$ depends only on the orbit of $\lambda$ under the dot-action of $\tilde W_l$, we only need to compute $(\lambda, \lambda +2\rho)$ for $6$ weights $\lambda$. Namely, $\lambda = \omega_1, \omega_2, 3\omega_1, 2\omega_2, 2\omega_1+\omega_2, $ and $4\omega_1$ (see Figure \ref{fig:weights_in_cell}). We use the normalization $(\omega_1,\omega_1)=2, (\omega_1, \omega_2)=3, (\omega_2,\omega_2) = 6$.  
    
\end{proof}

By Corollary \ref{red} and part (1) of Lemma \ref{lemma: de-eq dims and twists} above, we just need to show that there are
no commutative exact algebras $R\in \overline \C$ such that all simple subquotients of $R$ are in the principal block. By part (3) of Lemma \ref{lemma: de-eq dims and twists}, 
this implies that $\FP(R)=r+s\alpha$ for some $r,s\in \Z_{\ge 0}$ (where $\alpha = [3]_7+[5]_7$).  

On the other hand, it is known (see \cite[Lemma 5.17]{ShYa}) that for an indecomposable commutative exact algebra $R$ in a non-degenerate category $\mathcal D$ we have 
$$\FP(\mathcal D_R^{loc})=\frac{\FP(\mathcal D)}{\FP(R)^2}.$$
Here $\FP(\mathcal D_R^{loc})$ is the Frobenius-Perron dimension of some category, so
this number is an algebraic integer, $\ge 1$, and it is greater than any of its
algebraic conjugates. 

By part (2) of Theorem \ref{main 7mod} and a direct computation, the norm of $\FP(\overline \C)$ is $7^7$.
Thus we have the following properties of $\FP(R)$:
\begin{itemize}
    \item  $\FP(R)=r+s\alpha$ for some $r,s\in \Z_{\ge 0}$;

    \item  $\FP(R)\le \sqrt{\FP(\overline \C)}\approx 55.2636$;

    \item Norm of $\FP(R)$ is some integral power of $7$ up to sign.
\end{itemize}

It is easy to find all the numbers satisfying the properties above using a computer.
Here is the list:

$$1,7,49,\alpha, 1+\alpha, 1+2\alpha, 1+3\alpha, 2+3\alpha, 3+4\alpha, 2+5\alpha, 7\alpha, 7+7\alpha.$$

The number $1$ can serve only as dimension of the trivial algebra $R=\be$.
The algebras $R$ with $\FP(R)=7, 49$ are not indecomposable (since $R$ would be a direct sum of several copies of $\be$ in this case, see \cite[Theorem 4.4.1]{EGNO}).
An algebra $R$ with $\FP(R)=\alpha$ or $7\alpha$ can't have a unit. For the remaining
possibilities for $\FP(R)$ the number $\frac{\FP(\bar \C)}{\FP(R)^2}$
is not the largest among its algebraic conjugates (for example when
$\FP(R)=3+4\alpha$ we get 
$\frac{\FP(\bar \C)}{\FP(R)^2}\approx 8.2884$,
however this number has a conjugate $\approx 328.5234$). This completes the proof.

\end{proof}

\section{Commutative exact algebras and Witt group}\label{Witt}
In this section $\K$ is an algebraically closed field of arbitrary characteristic.
\subsection{Exact algebras} Let $\C$ be a finite tensor category over $\K$ and let $A=(A,m,u)\in \C$ be an algebra,
see e.g. \cite[Definition 7.8.1]{EGNO}. One defines notions of right and left modules
over $A$ in $\C$, see \cite[Definition 7.8.5]{EGNO}. Let $\C_A$ (respectively $\phantom{}_A\C$)
be the category of right (respectively left) $A-$modules. Then $\C_A$ has a structure of module
category over $\C$, see \cite[Proposition 7.8.10]{EGNO}. An algebra $A\in \C$ is {\em exact}
if the module category $\C_A$ is exact (see \cite[Definition 7.8.20]{EGNO}); this means
that for any $M\in \C_A$ and any projective object $P\in \C$, the $A-$module $P\otimes M\in \C_A$
is a projective object of category $\C_A$. Equivalently, tensor product $\otimes_A$ over $A$
is exact, or the category $\phantom{}_A\C_A$ of $A-$bimodules is rigid, see e.g. 
\cite[Theorem 5.1]{ShYa}. A very useful characterization of exact algebras was given
in \cite{CSZ}:

\begin{theorem}{\cite[Theorem 7.1]{CSZ}}\label{CoSZ}
For an algebra $A$ in a finite tensor category $\C$ the following conditions are equivalent:

(1) $A$ is exact;

(2) $A$ has no non-zero nilpotent ideals;

(3) $A$ is a finite direct product of simple algebras.
\end{theorem}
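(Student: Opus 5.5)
The plan is to prove the cycle (1) $\Rightarrow$ (2) $\Rightarrow$ (3) $\Rightarrow$ (1). Here a simple algebra means one with no non-trivial two-sided ideals, and an ideal means a subobject $J\subset A$ that is an $A$-bimodule. Throughout I will use that $\C_A$ is a finite abelian $\C$-module category. I will also use the internal Hom formalism: for $M\in\C_A$ the functor $\underline{\Hom}(M,-)$ is right adjoint to $-\otimes M$.

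\emph{(1) $\Rightarrow$ (2).} Suppose $A$ is exact and $J\subset A$ is a nonzero nilpotent ideal, say with $J^n=0$.
\begin{itemize}
\item Take a projective $P\in\C$ with an epimorphism $P\to J$.
\item By exactness, $P\otimes A$ is projective in $\C_A$. More generally, every object of $\C_A$ is a quotient of a projective of this form, and the projective objects of $\C_A$ are exactly the summands of the $P\otimes A$.
\item The key point is that exact module categories have the ``projective = injective'' property: every object $P\otimes M$ with $P$ projective is both projective and injective in $\C_A$. This is a standard consequence of exactness, obtained by dualizing with $P^*$.
\item Consider the simple $A$-module quotients of $A$, and the image of $J$ under the action map $J\otimes M\to M$. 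Nilpotency of $J$ forces $J$ to act by zero on every simple right $A$-module $S$, because $SJ\subset S$ is a submodule and $SJ^n=0$.
\item Since $P\otimes S$ is projective, and $J$ acts trivially on $S$, the right action of $A$ on $P\otimes S$ factors through $A/J$.
\item On the other hand, $A$ itself embeds into some $P\otimes S'$ (it has an injective envelope of that form). Applying $\underline{\Hom}(A,-)$ then shows that $J$ annihilates $A$, i.e.\ $J=AJ=0$. This is a contradiction.
\end{itemize}

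\emph{(2) $\Rightarrow$ (3).} Suppose $A$ has no nilpotent ideals.
\begin{itemize}
\item Finiteness of length lets me pick a minimal nonzero ideal $I$.
\item Then $I^2\neq0$ by (2), and hence $I^2=I$.
\item I then want a Wedderburn-type splitting $A=I\times A'$. I expect to get it from a central idempotent in $\End_{A\text{-}A}(A)$: the bimodule endomorphism algebra of $A$ is a finite-dimensional commutative-ish algebra, and its Jacobson radical corresponds to nilpotent ideals.
\item Concretely, consider the annihilator $\mathrm{Ann}(I)$, defined as the largest ideal $K$ with $IK=KI=0$.
\item Then $I\cap\mathrm{Ann}(I)$ squares to zero, so it vanishes by (2).
\item Next I must show $I+\mathrm{Ann}(I)=A$. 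Here the plan is to use $I=I^2$ and a unit argument: the multiplication $I\otimes A\to I$ restricted suitably produces an element of $\Hom(\be,I)$ acting as a unit on $I$.
\item Induction on length then gives the product decomposition, with each factor simple.
\end{itemize}
This step, constructing the idempotent splitting in a non-semisimple $\C$ where $\Hom(\be,A)$ need not detect everything, is the main obstacle. If the direct approach fails, I would first try passing to $\End_{\C_A}(A)\cong\Hom(\be,A)$ and the bimodule category $\phantom{}_A\C_A$, where ideals become subobjects of the unit object $A$.

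\emph{(3) $\Rightarrow$ (1).} A product of algebras gives a direct sum of the module categories $\C_{A_i}$, so it suffices to treat $A$ simple. In that case the natural strategy is as follows.
\begin{itemize}
\item Show that $\C_A$ is an indecomposable module category in which the functor $-\otimes A$ is faithful.
\item For projective $P\in\C$ and any $M\in\C_A$, projectivity of $P\otimes M$ is equivalent to exactness of $\Hom_A(P\otimes M,-)\cong\Hom(P,\underline{\Hom}(M,-))$. This holds once $\underline{\Hom}(M,-)$ is exact.
\item The kernel of the failure of exactness is measured by a nilpotent bimodule (the radical of $\underline{\End}(M)$ restricted to $A$). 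By simplicity this radical must vanish.
\end{itemize}
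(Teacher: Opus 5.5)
The paper does not prove this statement; it quotes it from \cite{CSZ}. Your (1)$\Rightarrow$(2) is essentially sound once you make one step explicit. By exactness, the injective hull $E$ of $A_A$ is projective, so it is a summand of $P\otimes E$ for a projective cover $P\to\be$. Moreover $P\otimes E\cong\bigoplus_i P\otimes S_i$ with $S_i$ simple, because $P\otimes(-)$ turns a composition series of $E$ into a filtration with projective subquotients, and such a filtration splits. Since $J$ kills every $P\otimes S_i$, it kills $A\subset E$, so $J=\mathrm{im}(\be\otimes J\to A\otimes J\to A)=0$. The other two implications, however, contain genuine gaps.

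In (2)$\Rightarrow$(3), the step $I+\mathrm{Ann}(I)=A$ does not follow from what you use: the ideal lattice, finite length, and an unspecified ``unit argument''. None of these involves rigidity of $\C$, and without rigidity the implication is false. Let $M=\{1,\phi,\phi^2\}$ be the monoid with $\phi^3=\phi^2$, and work in the finite but non-rigid monoidal category $\Rep(M)$. Take $A=M_2(\K)\times\K[x]/(x^2)$ with $\phi(a,c_0+c_1x)=(c_0+c_1E_{12},\,c_0)$, where $E_{12}$ is a matrix unit and $c_0$ is read as a scalar matrix. This $\phi$ is a unital algebra endomorphism with $\phi^3=\phi^2$. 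The ideals of $A$ in $\Rep(M)$, i.e. the $\phi$-stable ones, form the chain $0\subset M_2(\K)\times 0\subset M_2(\K)\times(x)\subset A$. So $A$ has no nonzero nilpotent ideals, yet it is not a product of simple algebras. The minimal ideal $I=M_2(\K)\times 0$ satisfies $I^2=I$, but $\mathrm{Ann}(I)=0$, and even $\Hom(\be,I)=0$, so no morphism $\be\to I$ could act as a unit. Your fallback of viewing ideals as subobjects of the unit of ${}_A\C_A$ does give a splitting, since the unit of a multitensor category is a direct sum of simples. But ${}_A\C_A$ is rigid only once $A$ is known to be exact, so this route proves (1)$\Rightarrow$(3), not (2)$\Rightarrow$(3).

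Hence everything rests on getting from (2) or (3) back to (1), i.e. on showing that a simple algebra in a finite tensor category is exact. That is the substantive part of \cite{CSZ}, and your sketch does not supply it. Reducing projectivity of $P\otimes M$ to exactness of $\underline{\Hom}(M,-)$ merely restates exactness of $\C_A$. The decisive claim, that failure of exactness is ``measured by a nilpotent bimodule, the radical of $\underline{\End}(M)$ restricted to $A$'', is not a construction: nothing in the proposal produces a nonzero nilpotent ideal of $A$ from a non-projective $P\otimes M$. The obvious consequence of simplicity is also not enough. Simplicity does give an injective algebra map from $A$ into the internal endomorphism algebra of a simple module, which is exact, but subalgebras of exact algebras need not be exact (e.g. $\K[x]/(x^2)\subset M_2(\K)$ in $\mathrm{Vec}$).
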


\subsection{Graded algebras} Let $G$ be a group. We say that an algebra $A\in \C$ is
$G-$graded if it is equipped with decomposition $A=\oplus_{g\in G}A_g$ and the image $A_gA_h$ 
of the multiplication map $m: A_g\otimes A_h\to A$ is contained in $A_{gh}\subset A$ for all
$g,h \in G$. Let $e\in G$ be the identity element. Clearly $A_e\subset A$ is a subalgebra of $A$ (note that the image of the identity morphism
is automatically contained in $A_e$).

Let $\phantom{}_A\C(G)$ be the category of left $G-$graded $A-$modules. 
We say that $A$ is {\em strongly graded} by $G$ if $A$ is $G-$graded and, in addition, $A_gA_h=A_{gh}$
for all $g,h\in G$. 

\begin{lemma}\label{dade1}
 (1) $A$ is strongly graded by $G$ if and only if $A_gA_{g^{-1}}=A_e$
 for all $g\in G$.

 (2) Assume $A$ is strongly graded by $G$ and let $K=\oplus_{g\in G}K_g\in \phantom{}_A\C(G)$. Then $K_e=0$ implies $K=0$.
\end{lemma}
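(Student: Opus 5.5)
Both parts are categorical versions of Dade's classical results on strongly graded rings. Throughout, I interpret ``image'' as the image subobject in the abelian category $\C$. I use that $A_gA_h$ is the image of $A_g\otimes A_h\to A$, and that products of images are associative: $(A_gA_h)A_k=A_g(A_hA_k)$ is the image of $A_g\otimes A_h\otimes A_k$. This holds because $\otimes$ is exact in each variable, so the image of $X\otimes Y\to A\otimes A\to A$ depends only on the images.

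\emph{Part (1).} One direction is immediate: take $h=g^{-1}$.

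For the converse, assume $A_gA_{g^{-1}}=A_e$ for all $g$. First, $A_eA_h=A_h$. Indeed, the unit $u:\be\to A$ factors through $A_e$, and $m\circ(u\otimes\Id)=\Id$ restricted to $A_h$ shows $A_h\subset A_eA_h$; the reverse inclusion $A_eA_h\subset A_h$ holds by the grading. Now, for arbitrary $g,h$,
$$A_{gh}=A_eA_{gh}=(A_gA_{g^{-1}})A_{gh}=A_g(A_{g^{-1}}A_{gh})\subset A_gA_h.$$
The reverse inclusion $A_gA_h\subset A_{gh}$ comes from the grading, so $A_gA_h=A_{gh}$.

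\emph{Part (2).} For a graded left module $K$, the action $A_g\otimes K_h\to K$ lands in $K_{gh}$. Write $A_gK_h$ for its image; the same associativity of images holds. By unitality, $K_h=A_eK_h$. Using strong grading with $g=h^{-1}$, so that $A_e=A_hA_{h^{-1}}$, we get
$$K_h=(A_hA_{h^{-1}})K_h=A_h(A_{h^{-1}}K_h)\subset A_hK_e=0.$$
Hence every $K_h=0$, and therefore $K=0$.

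The main technical point is the associativity of images. I justify it by writing $\operatorname{im}(X\otimes Y\otimes Z\to A)$ through the epimorphisms $A_g\otimes A_h\twoheadrightarrow A_gA_h$. Tensoring with $A_k$ preserves epimorphisms by exactness of $\otimes$ in a finite tensor category. Since images are unchanged by precomposing with an epimorphism, both bracketings equal the image of the triple multiplication, and these coincide by associativity of $m$ (respectively of the action).
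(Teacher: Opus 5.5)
Your proposal is correct and follows the paper's argument essentially verbatim. In (1) you use $A_{gh}=A_eA_{gh}=A_gA_{g^{-1}}A_{gh}\subset A_gA_h$, and in (2) you use $K_h=A_hA_{h^{-1}}K_h\subset A_hK_e=0$, exactly as the paper does; your added justifications (associativity of products of image subobjects via exactness of $\otimes$, and $A_eA_h=A_h$ because the unit factors through $A_e$) spell out steps the paper leaves implicit.
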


\begin{proof}
  (1) The condition  $A_gA_{g^{-1}}=A_e$ is a part of definition of strongly graded algebra, so one implication is clear. Now assume
  that $A_gA_{g^{-1}}=A_e$. Then $A_{gh}=A_eA_{gh}=A_gA_{g^{-1}}A_{gh}\subset A_gA_h$ and we proved the second implication.

  (2) Assume $K_e=0$. Then $K_g=A_eK_g=A_gA_{g^{-1}}K_g\subset A_gK_e=0$.
\end{proof}

For a $G-$graded algebra $A$ we have functors $(?)_e: \phantom{}_A\C(G)\to \phantom{}_{A_e}\C, K\mapsto K_e$ and $A\otimes_{A_e}?: \phantom{}_{A_e}\C \to \phantom{}_A\C(G), M\mapsto \oplus_{g\in G}A_g\otimes_{A_e}M$. 
The following result is a categorical version of a well known theorem of Dade \cite{D}.

\begin{theorem}\label{dade2}
Assume an algebra $A\in \C$ is strongly graded by a finite group $G$. 

(1) The functors $(?)_e$ and $A\otimes_{A_e}?$ are mutually inverse equivalences.

(2) The multiplication induces isomorphisms of $A_e-$bimodules
$A_g\otimes_{A_e}A_h\simeq A_{gh}$.
\end{theorem}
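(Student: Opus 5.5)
We will prove (2) first and then deduce (1), imitating Dade's classical argument inside $\C$. The multiplication $m$ restricts to morphisms $A_g\otimes A_h\to A_{gh}$, and associativity makes these $A_e$-balanced. They therefore factor through $m_{g,h}: A_g\otimes_{A_e}A_h\to A_{gh}$, which is a map of $A_e$-bimodules. Strong grading says $A_gA_h=A_{gh}$, so $m_{g,h}$ is an epimorphism; what remains is injectivity.

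For injectivity we use the unit. Since $A_{g^{-1}}A_g=A_e$, the map $m_{g^{-1},g}:A_{g^{-1}}\otimes_{A_e}A_g\to A_e$ is surjective. Because $A_e$ is projective as a left (and right) module over itself in $\C_{A_e}$, the map $m_{g^{-1},g}$ splits as a map of bimodules. We will avoid this splitting, which is delicate in a category, by using the Morita-context argument instead. Consider $A_g$ as an $A_e$-bimodule with ``inverse'' $A_{g^{-1}}$. Both $m_{g,g^{-1}}$ and $m_{g^{-1},g}$ are surjective, and associativity gives the compatibility
\[
m_{g,g^{-1}}\otimes \Id = \Id\otimes m_{g^{-1},g}
\]
on $A_g\otimes_{A_e}A_{g^{-1}}\otimes_{A_e}A_g$. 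The standard Morita lemma then says that a surjective Morita context is an isomorphism. Its proof only uses the exactness of $\otimes_{A_e}$ on the appropriate side together with these two compatibilities, and it carries over verbatim to the bimodule category $\phantom{}_{A_e}\C_{A_e}$, which is where I expect the main technical care to be needed. It follows that $A_g$ is an invertible $A_e$-bimodule with inverse $A_{g^{-1}}$. Tensoring $m_{g,h}$ on the left with $A_{g^{-1}}$ identifies it, up to these isomorphisms, with $m_{e,h}:A_e\otimes_{A_e}A_h\to A_h$, which is an isomorphism. Since $A_{g^{-1}}\otimes_{A_e}-$ is an equivalence, $m_{g,h}$ is an isomorphism, and this proves (2).

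For (1), note that $(A\otimes_{A_e}M)_e=A_e\otimes_{A_e}M\simeq M$, so $(?)_e\circ(A\otimes_{A_e}?)\simeq\Id$ naturally. In the other direction, for $K\in\phantom{}_A\C(G)$ the action gives a natural map
\[
\phi_K:\bigoplus_g A_g\otimes_{A_e}K_e\to K,
\]
which is a morphism of graded $A$-modules and an isomorphism in degree $e$. Each component $A_g\otimes_{A_e}K_e\to K_g$ is an isomorphism: it is surjective because $K_g=A_gA_{g^{-1}}K_g\subset A_gK_e$, as in Lemma~\ref{dade1}(2). It is injective because composing with $A_{g^{-1}}\otimes_{A_e}-$ and the action $A_{g^{-1}}\otimes_{A_e}K_g\to K_e$ recovers, via (2), the identity of $K_e$. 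A slicker route is to apply Lemma~\ref{dade1}(2) to $\ker\phi_K$ and $\mathrm{coker}\,\phi_K$, which are graded $A$-modules vanishing in degree $e$. The kernel needs $(?)_e$ to be exact, which holds since it is a direct-summand functor, together with the fact that $\phi_K$ is an isomorphism in degree $e$. Hence $\phi_K$ is an isomorphism, and the two functors are mutually inverse.
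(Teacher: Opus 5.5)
Your proof is correct, but it runs in the opposite order from the paper's. The paper proves (1) first, by exactly what you call the ``slicker route'': the map $A\otimes_{A_e}K_e\to K$ is an isomorphism in degree $e$, so its kernel and cokernel are graded $A$-modules with zero $e$-component, hence zero by Lemma~\ref{dade1}(2). This uses neither (2) nor any invertibility of $A_g$. Part (2) then comes for free. Apply (1) to the regraded module $A(h)$, $A(h)_g=A_{gh}$, whose $e$-component is $A_h$. The resulting isomorphism $A\otimes_{A_e}A_h\to A(h)$ has $g$-component $m_{g,h}:A_g\otimes_{A_e}A_h\to A_{gh}$.

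Your route instead gets (2) directly, by showing via a Morita-context lemma that each $A_g$ is an invertible $A_e$-bimodule, and then deduces (1). This buys nothing extra, since invertibility of $A_g$ is just the case $h=g^{-1}$ of (2). It also costs you a categorical Morita lemma. Since your slick proof of (1) does not depend on (2), you could simply have applied it to $A(h)$.

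Three small corrections to the detour.

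First, the Morita lemma does not transfer ``verbatim'': the classical proof picks an element mapping to $1$. A categorical replacement goes as follows. Set $T=A_g\otimes_{A_e}A_{g^{-1}}$, $\tau=m_{g,g^{-1}}$ and $\iota:\ker\tau\to T$. Your compatibilities give $\Id_T\otimes\tau=\tau\otimes\Id_T$ on $T\otimes_{A_e}T$. Hence $\iota\circ(\Id_{\ker\tau}\otimes\tau)=(\Id_T\otimes\tau)\circ(\iota\otimes\Id_T)=(\tau\circ\iota)\otimes\Id_T=0$. Since $\Id_{\ker\tau}\otimes\tau$ is an epimorphism by right exactness of $\otimes_{A_e}$, we get $\iota=0$.

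Second, your aside that $A_e$ is projective over itself, so $m_{g^{-1},g}$ splits, is false in general. $\Hom_{A_e}(A_e,-)\simeq\Hom_\C(\be,-)$ need not be exact, e.g.\ for $A_e=\be$ in a non-semisimple $\C$, which is precisely the case of Corollary~\ref{dade3}. Also, one-sided projectivity would not give a bimodule splitting anyway. Fortunately you never use it.

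Third, ``identifies $m_{g,h}$ with $m_{e,h}$'' should be made precise. Associativity gives that $m_{g^{-1},gh}\circ(\Id\otimes m_{g,h})=m_{e,h}\circ(m_{g^{-1},g}\otimes\Id)$ is an isomorphism. Both factors on the left are epimorphisms, so $\Id\otimes m_{g,h}$ is an isomorphism, and hence so is $m_{g,h}$.
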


\begin{proof}
    (1) It is clear that $(A\otimes_{A_e}M)_e=A_e\otimes_{A_e}M=M$, so
    one composition is isomorphic to the identity functor. Now let
    $K=\oplus_{g\in G}K_g\in \phantom{}_A\C(G)$. The action morphism
    $A\otimes K\to K$ restricted to $K_e$ induces a morphism in $\phantom{}_A\C(G)$ $A\otimes_{A_e}K_e\to K$. It is clear that this
    morphism is isomorphism in the $e-$component. Thus its kernel and
    cokernel have zero $e-$component. It follows from Lemma \ref{dade1} (2) that the kernel and cokernel are both zero and the morphism 
    $A\otimes_{A_e}K_e\to K$ is an isomorphism. Thus the second composition is also isomorphic to the identity functor.

    (2) We proved in (1) that the action map induces isomorphism
    $A\otimes_{A_e}K_e\to K$ for any $K\in \phantom{}_A\C(G)$.
    Now apply it for $K=A(h)$ where $A(h)_g=A_{gh}$.
\end{proof}

\begin{remark}
    The proofs of Lemma \ref{dade1} and Theorem \ref{dade2} follow
    \cite[1.3]{NO} very closely.
\end{remark}

The following result is an immediate consequence of Theorem \ref{dade2} (2):

\begin{corollary}\label{dade3}
Assume that an algebra $A\in \C$ is strongly graded by group $G$ and 
$A_e=\be$. Then each $A_g$ is an invertible object of $\C$.
\end{corollary}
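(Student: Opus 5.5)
Since $A_e=\be$, the category $\phantom{}_{A_e}\C_{A_e}$ of $A_e$-bimodules is just $\C$, and $\otimes_{A_e}$ is the ordinary tensor product $\otimes$ of $\C$. Theorem \ref{dade2} (2) then gives, for all $g,h\in G$, isomorphisms in $\C$
$$A_g\otimes A_h\simeq A_{gh}$$
induced by the multiplication $m$. Taking $h=g^{-1}$, and then $g$ replaced by $g^{-1}$ and $h=g$, I get
$$A_g\otimes A_{g^{-1}}\simeq A_e=\be,\qquad A_{g^{-1}}\otimes A_g\simeq \be .$$

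To conclude that $A_g$ is invertible I will use the standard criterion: in a rigid monoidal category an object $X$ is invertible once $X\otimes Y\simeq \be$ for some $Y$. This holds in our finite tensor category $\C$; for instance $\FP(X)\FP(Y)=1$ with both positive algebraic integers forces $\FP(X)=1$. Since I also have the two-sided statement, it is cleanest to argue directly. From $X\otimes Y\simeq\be$ and $Y\otimes X\simeq\be$, the functor $X\otimes ?$ is an autoequivalence of $\C$ with quasi-inverse $Y\otimes ?$. Hence the evaluation $X^*\otimes X\to\be$ is an isomorphism, since $X^*\simeq Y$ by uniqueness of duals (cf. \cite[Proposition 2.11.1]{EGNO} or the definition of invertible objects in \cite{EGNO}).

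The only point needing care is to check that Theorem \ref{dade2} (2) really applies with no extra hypotheses, namely that $A$ is strongly graded and $G$ is finite. The statement presumably assumes this implicitly; if $G$ is infinite, the proof of Theorem \ref{dade2} (2) works verbatim anyway, since finiteness was never used beyond having $A$ as an object of $\C$. I would also remark that $A_g\ne 0$ automatically, because $A_gA_{g^{-1}}=A_e=\be\neq0$. So there is no real obstacle: the corollary is a one-line consequence, and the only thing to state explicitly is the identification $\otimes_{\be}=\otimes$.
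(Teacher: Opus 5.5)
Your proof is correct and is exactly the paper's argument: with $A_e=\be$ the relative product $\otimes_{A_e}$ is just $\otimes$, so Theorem \ref{dade2} (2) gives $A_g\otimes A_{g^{-1}}\simeq A_e=\be$ (and symmetrically $A_{g^{-1}}\otimes A_g\simeq\be$), which makes $A_g$ invertible. One small slip in your aside: two positive algebraic integers with product $1$ need not both equal $1$ (e.g.\ $\frac{1+\sqrt5}{2}$ and $\frac{\sqrt5-1}{2}$), so the $\FP$ route needs the stronger standard fact that $\FP(X)\ge 1$ for every nonzero object $X$; your direct two-sided argument does not depend on this in any case.
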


\subsection{Commutative algebras} In this section we assume that $\C$ is a braided finite
tensor category. Thus it makes sense to talk about commutative algebras $A\in \C$.

\begin{proposition}\label{subex}
   Assume $A\in \C$ is a commutative exact algebra. Then

   (1) Any unital subalgebra $B\subset A$ is exact.

   (2) If $A$ is indecomposable then it has no non-trivial right or left ideals.
\end{proposition}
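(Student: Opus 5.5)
The plan is to work throughout with the characterization of exact algebras in Theorem \ref{CoSZ}. The key point is that in a commutative algebra every one-sided ideal is automatically two-sided. We use the braiding to make this precise: if $I\subset A$ is a left ideal, i.e. $m(A\otimes I)\subset I$, then, since $m\circ c_{A,A}=m$ and the braiding is natural, we get $m|_{I\otimes A}=m|_{A\otimes I}\circ c_{I,A}$. Hence $m(I\otimes A)=m(A\otimes I)\subset I$, so $I$ is also a right ideal. The same argument, run in reverse, handles right ideals.

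For (2), assume $A$ is indecomposable and let $I$ be a left (hence two-sided) ideal. By Theorem \ref{CoSZ}, $A$ is a finite product of simple algebras. Any decomposition of $A$ as a product of algebras makes $A$ decomposable. So indecomposability forces $A$ to be simple, meaning it has no nontrivial two-sided ideals. Therefore $I=0$ or $I=A$. One should check that "indecomposable" here means indecomposable as an algebra, i.e. not a product; this matches the use in Theorem \ref{CoSZ}(3).

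For (1), let $B\subset A$ be a unital subalgebra. By Theorem \ref{CoSZ} it suffices to show that $B$ has no nonzero nilpotent ideals. Suppose $J\subset B$ is an ideal with $J^n=0$, where $J^n$ denotes the image of the iterated multiplication $J^{\otimes n}\to B$. Consider $AJ:=m(A\otimes J)\subset A$. This is a left ideal of $A$ by associativity, and hence a two-sided ideal by commutativity. Using commutativity and associativity again (the braiding lets us move the factors of $A$ past those of $J$), we get $(AJ)^n=m(A^{\otimes n}\otimes J^{\otimes n})$, and this is contained in $A\,J^n=0$. So $AJ$ is a nilpotent ideal of $A$, and exactness of $A$ together with Theorem \ref{CoSZ} gives $AJ=0$. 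Finally, $B$ is unital with the same unit as $A$, so $J=u\cdot J\subset AJ=0$. Thus $B$ has no nonzero nilpotent ideals and is exact.

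The main obstacle is making "$(AJ)^n\subset AJ^n$" rigorous in a categorical setting, since there are no elements to manipulate. I would handle it with a commutative diagram showing that $m^{(2n)}$ restricted to $(A\otimes J)^{\otimes n}$ equals $m^{(2n)}\circ(\text{braiding permutation})$ restricted to $A^{\otimes n}\otimes J^{\otimes n}$. The equality $m^{(k)}\circ\beta=m^{(k)}$ holds for any braid $\beta$ because $A$ is commutative. One then takes images, using exactness of image formation in the abelian category $\C$ to pass from the epimorphism $A\otimes J\to AJ$ to the tensor powers.
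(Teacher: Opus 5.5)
Your proposal is correct and follows essentially the same route as the paper. For (1), the paper also takes a nonzero nilpotent ideal $I\subset B$, passes to the nilpotent ideal $AI\subset A$, and invokes Theorem \ref{CoSZ}; for (2), it likewise notes that one-sided ideals are two-sided in a commutative algebra and applies Theorem \ref{CoSZ}. Your braid-absorption argument for $(AJ)^n\subset A\,J^n$ and the observation $J\subset AJ$ simply spell out steps the paper leaves implicit.
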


\begin{proof}
    (1) If $B\subset A$ is a subalgebra that is not exact, then $B$ contains a nontrivial
    nilpotent ideal $I$ by Theorem \ref{CoSZ}. Then $AI\subset A$ is a nontrivial nilpotent
    ideal in $A$ and we have a contradiction with Theorem \ref{CoSZ}.

    (2) Immediate from Theorem \ref{CoSZ} since right, left, and two-sided ideals coincide in $A$.
\end{proof}

\begin{remark}
    (1) One can drop assumption that $B$ is unital in Proposition \ref{subex} (1).
    Namely for the proof above it is sufficient that $B$ has its own identity (which
    might be different from the identity of $A$). Now for any subalgebra $B$ we can
    consider a bigger subalgebra $\tilde B$ spanned by $B$ and the image of the unit
    morphism of $A$. The subalgebra $\tilde B$ is unital, so it is exact; also $B\subset \tilde B$
    is an ideal. Thus by Theorem \ref{CoSZ} $B$ is a direct summand of exact algebra $\tilde B$;
    in particular it has its own unit morphism. 

    (2) Let us assume that $\mbox{char}~ \K=0$ and $\C$ is a braided fusion category. Then commutative exact algebras in $\C$ are \'etale algebras, as defined in \cite[Definition 3.1]{DMNO}.
    In this case Proposition \ref{subex} (1) positively answers question raised
    in \cite[Remark 3.4]{DNO}: any subalgebra of \'etale algebra is \'etale.
\end{remark}

Recall that a commutative exact algebra $A$ is indecomposable if and only if
$\Hom(\be,A)$ is one dimensional, see e.g. \cite[Lemma 5.8]{ShYa}.

\begin{proposition}\label{exstr}
    Let $A\in \C$ be an indecomposable commutative exact algebra. Assume that $A$ is
    $G-$graded for some group $G$. Let $H\subset G$ be a subgroup generated by all
    $g\in G$ such that $A_g\ne 0$. Then $A$ is strongly graded by $H$ (in particular
    $A_g\ne 0$ for any $g\in H$). Also $A_e\subset A$ is an indecomposable commutative exact
    algebra.
\end{proposition}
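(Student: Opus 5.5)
The plan is to use the ideal structure of $A$ from Proposition \ref{subex}(2) together with the Dade machinery in Lemma \ref{dade1}. First, the pieces $A_h$ are not assumed to be ideals, so instead of those we look at the right ideals generated by them. For each $g$ with $A_g\ne 0$, the image $A_gA$ of the multiplication map $A_g\otimes A\to A$ is a nonzero right ideal. It is nonzero because $A_g=A_g\cdot 1\subset A_gA$. By Proposition \ref{subex}(2) it therefore equals $A$. In particular its $e$-component equals $A_e$. Comparing graded components of $A_gA=\oplus_h A_gA_h\subset\oplus_h A_{gh}$, the $e$-component of $A_gA$ is $A_gA_{g^{-1}}$. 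So $A_gA_{g^{-1}}=A_e$, and hence $A_{g^{-1}}\ne 0$, for every $g$ in the support.

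Next we show the support $S=\{g: A_g\ne0\}$ is closed under products. Take $g,h\in S$. Then
\[
A_{gh}\supset A_gA_h\supset A_g\bigl(A_hA_{h^{-1}}\bigr)=A_gA_e=A_g\ne 0 .
\]
More carefully, we use associativity: $(A_gA_h)A_{h^{-1}}=A_g(A_hA_{h^{-1}})=A_gA_e=A_g\neq 0$, so $A_gA_h\ne 0$ and hence $gh\in S$. Combined with closure under inverses and $e\in S$ (the unit lies in $A_e$), $S$ is a subgroup, so $S=H$. Now $A$ is an $H$-graded algebra with $A_gA_{g^{-1}}=A_e$ for all $g\in H$, and Lemma \ref{dade1}(1) gives that $A$ is strongly graded by $H$.

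For the last claim, $A_e$ is a unital subalgebra, since the unit of $A$ lies in $A_e$. It is commutative as a subalgebra of a commutative algebra, because the braiding is natural. It is exact by Proposition \ref{subex}(1). Indecomposability follows from the criterion that $\Hom(\be,A_e)\subset\Hom(\be,A)$ is one-dimensional.

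The main obstacle is the first step. We have to make sure that "image of multiplication" and "product of subobjects" behave as expected inside an abelian category, namely that $(A_gA_h)A_k=A_g(A_hA_k)$ and that the $e$-component of $A_gA$ is exactly $A_gA_{g^{-1}}$. For the first, I would argue from surjectivity of $A_g\otimes A_h\otimes A_k\to A_gA_h\otimes A_k$, using exactness of $\otimes$. For the second, I would note that the graded decomposition of $A_g\otimes A$ maps componentwise into $\oplus_h A_{gh}$.
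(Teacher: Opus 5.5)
Your proof is correct. It rests on the same key input as the paper's: Proposition \ref{subex}(2), applied to the one-sided ideal generated by a nonzero homogeneous component, followed by Lemma \ref{dade1}(1). The bookkeeping, however, runs in the opposite direction.

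The paper argues by contradiction. If $g,h$ lie in the support but $gh$ does not, then $AA_g$ is a nonzero proper ideal, since it is annihilated by $A_h$. Hence the support is a finite multiplicatively closed set, and therefore a subgroup. The paper then first shows that $A_e$ is an indecomposable exact commutative algebra. This forces the ideal $A_gA_{g^{-1}}$ of $A_e$ to be either $0$ or $A_e$, and the case $0$ is excluded by the same annihilator trick.

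You instead use the positive statement $A_gA=A$ and compare graded components. This gives $A_gA_h=A_{gh}$ for every $h$ at once, in particular $A_gA_{g^{-1}}=A_e\neq 0$. So you get closure under inverses directly, and you need neither finiteness of the support nor simplicity of $A_e$ for the strong grading. Exactness of $A_e$ enters only in the final claim.

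The two technical points you flag are fine. Associativity of products of subobjects follows from exactness of $\otimes$. The image of $\bigoplus_h (A_g\otimes A_h\to A_{gh})$ is the direct sum of the images, because $h\mapsto gh$ is injective.

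One small correction: drop the first displayed chain in your second step. The containment $A_gA_h\supset A_g(A_hA_{h^{-1}})=A_g$ is false for $h\neq e$, since the two sides live in different components. The associativity argument you give immediately afterwards is the correct one.
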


\begin{proof}
    Let $H_0=\{ g\in G\; |\; A_g\ne 0\}$. Let us show that $H_0=H$, i.e. $H_0$ is a subgroup
    of $G$. Let us show that $H_0$ is closed under multiplication. 
    Otherwise we can find $g,h\in H_0$ such that $gh\not \in H_0$. It follows
    that $AA_g\subset A$ is a proper ideal since it satisfies $AA_g\cdot A_h\subset AA_{gh}=0$.
    This is a contradiction with Proposition \ref{subex} (2). Thus, $H_0\subset G$ is a finite
    subset closed under multiplication; it follows that $H_0=H$ is a subgroup.

    Now by Proposition \ref{subex} (1), $A_e\subset A$ is an exact subalgebra; this subalgebra
    is indecomposable since $\Hom(\be,A_e)\subset \Hom(\be,A)=\K$. The image of multiplication
    $A_gA_{g^{-1}}\subset A_e$ is an ideal of $A_e$; therefore, either $A_gA_{g^{-1}}=A_e$ or
    $A_gA_{g^{-1}}=0$. In the latter case $AA_g$ is a proper ideal of $A$ since it satisfies
    $AA_g\cdot A_{g^{-1}}=0$. This is impossible by Proposition \ref{subex} (2).
    Thus $A_gA_{g^{-1}}=A_e$ for all $g\in H$ and $A$ is strongly graded by $H$ by 
    Lemma \ref{dade1} (1).
\end{proof}

\subsection{Commutative algebras in ribbon categories} In this section we assume 
that $\C$ is a ribbon finite tensor category with a twist $\theta$. For any $M\in \C$
and $t \in \K^\times$ let $M_t^\theta \subset M$ be the largest subobject such
that $\theta_M-t\Id$ acts nilpotently on $M_t^\theta$. Then we have a decomposition
$M=\oplus_{t\in \K^\times}M_t^\theta$.

\begin{remark}
    It was proved by Etingof \cite{Ev} that in characteristic zero $M_t^\theta \ne 0$ only when $t$ is a root of unity.
\end{remark}

\begin{lemma}\label{tdec}
Let $A\in \C$ be a commutative algebra. Then decomposition $A=\oplus_{t\in \K^\times}A_t^\theta$
is a grading of $A$ by group $\K^\times$.
\end{lemma}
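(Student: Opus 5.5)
We need to show that $m(A_s^\theta\otimes A_t^\theta)\subset A_{st}^\theta$. The plan is to use naturality of $\theta$ together with the twist formula \eqref{twist} and commutativity of $A$.

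\emph{Step 1 (twist on the multiplication).} By naturality of $\theta$ with respect to $m$ we have $\theta_A\circ m=m\circ\theta_{A\otimes A}$. By \eqref{twist}, $\theta_{A\otimes A}=(\theta_A\otimes\theta_A)\circ c_{A,A}\circ c_{A,A}$. Commutativity means $m\circ c_{A,A}=m$, so $m\circ c_{A,A}^2=m$. Since the braiding is natural, $c_{A,A}$ commutes with $\theta_A\otimes\theta_A$. Putting these together,
$$\theta_A\circ m=m\circ c_{A,A}^2\circ(\theta_A\otimes\theta_A)=m\circ(\theta_A\otimes\theta_A).$$
The key point is that the double braiding is absorbed by commutativity.

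\emph{Step 2 (restrict to the summands).} Set $X=A_s^\theta$, $Y=A_t^\theta$, and let $\mu=m|_{X\otimes Y}:X\otimes Y\to A$. Because $\theta_A$ preserves $X$ and $Y$ (they are generalized eigen-summands), Step 1 gives $\theta_A\circ\mu=\mu\circ(\theta_X\otimes\theta_Y)$. Write $\theta_X=s+N_1$ and $\theta_Y=t+N_2$ with $N_1,N_2$ nilpotent. Then $\theta_X\otimes\theta_Y-st$ equals $s\,\Id\otimes N_2+N_1\otimes t\,\Id+N_1\otimes N_2$, a sum of commuting nilpotents, hence nilpotent. It follows that $(\theta_A-st)^k\circ\mu=\mu\circ(\theta_X\otimes\theta_Y-st)^k=0$ for $k$ large.

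\emph{Step 3 (conclude).} The image of $\mu$ is annihilated by a power of $\theta_A-st$. Under the decomposition $A=\oplus_u A_u^\theta$, the operator $\theta_A-st$ is invertible on every $A_u^\theta$ with $u\ne st$. Therefore each component of $\mu$ into $A_u^\theta$ with $u\ne st$ vanishes, and $\operatorname{Im}\mu\subset A_{st}^\theta$. This is exactly the grading property.

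The only mildly delicate point is Step 1, namely justifying that $c^2$ commutes past $\theta\otimes\theta$ and is killed by $m$. Both follow directly from naturality and from $m\circ c=m$.
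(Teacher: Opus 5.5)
Your proof is correct and follows the paper's argument: you obtain $\theta_A\circ m=m\circ(\theta_A\otimes\theta_A)$ from \eqref{twist}, commutativity and naturality, and then use that $\theta_A\otimes\theta_A-st\,\Id$ is nilpotent on $A_s^\theta\otimes A_t^\theta$. Your Step 3 spells out, via the projections onto the summands $A_u^\theta$, the final containment in $A_{st}^\theta$, which the paper leaves implicit through the maximality of $A_{st}^\theta$.
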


\begin{proof}
    Let $m:A \otimes A\to A$ be the multiplication map. Using \eqref{twist}
    we get
    $$\theta_{A\otimes A}=c_{A,A}\circ c_{A,A}\circ (\theta_A\otimes \theta_A).$$
    Composing with $m$ and using commutativity and naturality of $\theta$ we get
    $$\theta_A\circ m=m\circ \theta_{A\otimes A}=m\circ (\theta_A\otimes \theta_A),$$
    whence 
    $$\theta_A^n\circ m=m\circ (\theta_A^n\otimes \theta_A^n)\;
    \mbox{for any}\; n\in \Z_{\ge 0}.$$
    It is clear that $\theta_A\otimes \theta_A-ts\Id$ acts nilpotently on $A_t^\theta \otimes A_s^\theta \subset A\otimes A$; it follows that $\theta_A-ts\Id$ acts nilpotently on
    $m(A_t^\theta \otimes A_s^\theta)$.
\end{proof}

\begin{remark}
    We don't know any examples of commutative exact algebras $A$ in ribbon finite tensor
    categories with $\theta_A^2\ne \Id$. In particular, we don't know examples of such algebras
    with $A_t^\theta \ne 0$ for $t\ne \pm 1$.
\end{remark}

\begin{corollary}\label{red}
    Assume that $\C$ is a ribbon finite tensor category such that $\C$ has no 
    nontrivial commutative exact algebras $A=A_1^\theta$ and that $\C$ has
    no nontrivial invertible objects. Then $\C$ is completely anisotropic, i.e.
    it has no nontrivial commutative exact algebras.
\end{corollary}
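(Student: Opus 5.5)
Let $A\in\C$ be a nontrivial commutative exact algebra; we derive a contradiction. By Theorem \ref{CoSZ}, $A$ is a finite direct product of simple algebras, and each factor is again commutative and exact. A factor $A_i\ne\be$ exists, since otherwise $A\simeq\be^{\oplus k}$ with $k\ge 2$, and such an algebra is excluded as trivial in the sense of complete anisotropy. So we may assume $A$ is indecomposable, i.e. $\dim\Hom(\be,A)=1$, and $A\ne\be$.

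\emph{Step 1: the twist grading.} By Lemma \ref{tdec}, the decomposition $A=\oplus_t A_t^\theta$ is a grading of $A$ by $G=\K^\times$. Proposition \ref{exstr} then gives the following. The set $H=\{t\mid A_t^\theta\ne 0\}$ is a subgroup of $\K^\times$; it is finite because $A$ has finitely many generalized eigenvalues. The algebra $A$ is strongly graded by $H$. The identity component $A_1^\theta$ is an indecomposable commutative exact algebra.

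\emph{Step 2: the identity component.} By hypothesis, $\C$ has no nontrivial commutative exact algebras of the form $B=B_1^\theta$. The algebra $B=A_1^\theta$ is indecomposable and satisfies $B_1^\theta=B$, so $B$ is trivial. Being indecomposable, this forces $A_1^\theta=\be$.

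\emph{Step 3: the graded components are invertible.} Since $A$ is strongly graded by $H$ with identity component $\be$, Corollary \ref{dade3} shows that every $A_t^\theta$, $t\in H$, is an invertible object. By hypothesis $\C$ has no nontrivial invertible objects, so each $A_t^\theta\simeq\be$. The twist acts on $\be$ by $\theta_\be=\Id$, so $\be$ lies entirely in the $t=1$ component, and $(\be)_t^\theta=0$ for $t\ne1$. Hence $A_t^\theta=0$ for every $t\ne 1$. Therefore $H=\{1\}$ and $A=A_1^\theta=\be$, contradicting $A\ne\be$.

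\emph{Main obstacle.} The delicate point is the reduction to the indecomposable case. One needs that the simple factors given by Theorem \ref{CoSZ} are themselves commutative exact algebras, and that "no nontrivial algebra" means "every such algebra is a sum of copies of $\be$". This convention has to be matched with the statement of the hypothesis on $A_1^\theta$. The remaining steps are direct applications of Proposition \ref{exstr} and Corollary \ref{dade3}.
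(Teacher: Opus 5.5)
Your proposal is correct and follows essentially the same route as the paper's proof: the twist grading from Lemma \ref{tdec}, then Proposition \ref{exstr} for the strong grading and for indecomposability of $A_1^\theta$, then the hypothesis forces $A_1^\theta=\be$, and Corollary \ref{dade3} makes the remaining components invertible. Your reduction to the indecomposable case and your use of $\theta_\be=\Id$ only spell out what the paper leaves implicit: it starts directly from an indecomposable algebra and simply calls the components with $t\ne 1$ nontrivial invertible objects.
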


\begin{proof}
    Let $B\in \C$ be an indecomposable commutative exact algebra. Then 
    $B=\oplus_{t\in \K^\times}B_t^\theta$ is a $\K^\times -$grading of $B$ by Lemma \ref{tdec}.
    By Proposition \ref{exstr} this grading induces strong grading by a suitable
    subgroup $H\subset \K^\times$ and $B_1^\theta \subset B$ is an indecomposable exact subalgebra. Thus by the assumptions we have $B_1^\theta =\be$ and by Corollary
    \ref{dade3} $B_t^\theta$ is a nontrivial invertible object of $\C$ for $1\ne t\in \K^\times$. Hence $H$ is trivial and the result follows.
\end{proof}

\subsection{Commutative exact algebras in Drinfeld centers}
Let $\A$ be a finite tensor category and let $\cZ(\A)$ be its Drinfeld center (see e.g. \cite[7.13]{EGNO}). Then $\cZ(\A)$ is a finite tensor category which has a natural braiding
(see \cite[8.5]{EGNO}); moreover $\cZ(\A)$ is factorizable, see \cite[8.6]{EGNO}. 

Let $F:\cZ(\A)\to \A$ be the forgetful functor, and let $I: \A \to \cZ(\A)$ be its right 
adjoint. The object $I(\be)\in \cZ(\A)$ has a natural structure of indecomposable
commutative exact algebra, see e.g. \cite[Remark 6.10]{ShYa}; moreover this is a {\em lagrangian} algebra (an indecomposable commutative exact algebra $A$ in a factorizable
braided finite tensor category $\C$ is lagrangian if $\FP(A)^2=\FP(\C)$). Converse
statement is a combination of \cite[Corollary 5.14 (a)]{ShYa} and \cite[Corollary 5.18]{ShYa}:

(a) if a factorizable braided finite tensor category $\C$ contains a lagrangian algebra $A$
then $\C \simeq \cZ(\A)$ for some finite tensor category $\A$; in fact we can take $\A$ to
be the opposite category of $\C_A$.

Now let $\Delta \in \A$ be an indecomposable exact algebra. Then the category
$\phantom{}_\Delta \A_\Delta$ of $\Delta-$bimodules in $\A$ is a finite tensor category,
see e.g. \cite[Remark 7.12.5]{EGNO}. Moreover we have a canonical braided equivalence
$\cZ(\A)\simeq \cZ(\phantom{}_\Delta \A_\Delta)$, see \cite[Corollary 7.16.2]{EGNO}. 
Using this equivalence we can define
the forgetful functor $F_\Delta :\cZ(\A)\simeq \cZ(\phantom{}_\Delta \A_\Delta)\to \phantom{}_\Delta \A_\Delta$ and its right adjoint $I_\Delta : \phantom{}_\Delta \A_\Delta \to
\cZ(\phantom{}_\Delta \A_\Delta)\simeq \cZ(\A)$ and lagrangian algebra $I_\Delta(\be)\in \cZ(\A)$.

\begin{theorem}\label{lagr}
    Let $\A$ be a finite tensor category and let $A\in \cZ(\A)$ be an indecomposable
    commutative exact algebra. Then there exists an indecomposable exact algebra 
    $\Delta \in \A$ such that $A$ is isomorphic to a subalgebra of Lagrangian algebra
    $I_\Delta(\be)$.
\end{theorem}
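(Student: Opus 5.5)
The approach is the standard one from the semisimple setting (Davydov--Müger--Nikshych--Ostrik), adapted to finite tensor categories. Given the indecomposable commutative exact algebra $A\in\cZ(\A)$, I will construct the algebra $\Delta$ as the image of $A$ under the forgetful functor, then show that $A$ embeds as a subalgebra of $I_\Delta(\be)$.

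First step: set $\Delta:=F(A)\in\A$. Since $F$ is a tensor functor, $\Delta$ is an algebra in $\A$; it is also a half-braided algebra, i.e. the half-braiding of $A$ makes it a "central" algebra. To see that $\Delta$ is exact I will use Theorem \ref{CoSZ}. A nilpotent ideal of $F(A)$ need not be an ideal of $A$ in $\cZ(\A)$, so I would instead take the sum of all nilpotent ideals of $F(A)$ (the nilradical). By uniqueness it is preserved by the half-braiding, hence it lifts to a nilpotent ideal of $A$ in $\cZ(\A)$, which must vanish by Theorem \ref{CoSZ}. Thus $\Delta$ is exact. It might, however, be decomposable: $\Hom_\A(\be,F(A))$ can be larger than $\K$. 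In that case I replace $\Delta$ by an indecomposable direct factor $\Delta_0$, i.e. a simple algebra factor in the sense of Theorem \ref{CoSZ}(3). Since $\A$ and the Morita-equivalent category ${}_{\Delta}\A_\Delta$ are used only through $\cZ$, it should suffice to work with $\Delta_0$.

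Second step: produce an algebra map $A\to I_{\Delta_0}(\be)$. By adjunction, morphisms $A\to I_{\Delta_0}(\be)$ in $\cZ({}_{\Delta_0}\A_{\Delta_0})$ correspond to morphisms $F_{\Delta_0}(A)\to\be_{{}_{\Delta_0}\A_{\Delta_0}}=\Delta_0$. Under the equivalence $\cZ(\A)\simeq\cZ({}_{\Delta_0}\A_{\Delta_0})$, the functor $F_{\Delta_0}$ sends $A$ to the bimodule $\Delta_0\otimes F(A)$, with right action twisted by the half-braiding; equivalently it is the free $\Delta_0$-bimodule built from $A$. Multiplication $\Delta_0\otimes F(A)\to F(A)\to\Delta_0$, using the projection $F(A)\to\Delta_0$, gives a bimodule map; checking bimodule compatibility uses commutativity of $A$ via the half-braiding. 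This map is an algebra morphism for the lax monoidal structure of $I_{\Delta_0}$, so the adjoint $\varphi:A\to I_{\Delta_0}(\be)$ is a morphism of algebras, and it is unital.

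Third step: show that $\varphi$ is injective. Its kernel is an ideal of $A$ (a two-sided ideal, since $A$ is commutative). By Proposition \ref{subex}(2) this ideal is either $0$ or $A$. It is not $A$ because $\varphi$ is unital and $A\neq0$. Hence $A$ is isomorphic to a subalgebra of the Lagrangian algebra $I_{\Delta_0}(\be)$.

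The main obstacle is the second step: carefully identifying $F_{\Delta_0}$ under the equivalence $\cZ(\A)\simeq\cZ({}_{\Delta_0}\A_{\Delta_0})$ (\cite[7.16]{EGNO}) and verifying that the constructed bimodule map is multiplicative. The descent of the nilradical in the first step is a secondary concern. A fallback is to use the known identification of $I_\Delta(\be)$ with the full centre of $\Delta$, and to show that $A$ lands in it by the universal property of the full centre, since $A\to\Delta$ is a central algebra map.
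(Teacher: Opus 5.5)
Your Steps 2 and 3 are essentially the paper's argument. The paper also takes the adjoint $\tilde d\colon A\to I_\Delta(\be)$ of a map $d\colon F_\Delta(A)=A\otimes\Delta\to\Delta$ built from multiplication. It gets multiplicativity from the lax monoidal structure of $I_\Delta$ and the monoidal adjunction, and injectivity from simplicity of $A$ (Proposition \ref{subex}(2)).

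The gap is in Step 1. The claim that $F(A)$ is exact is false in general. The reason you give, that the nilradical is preserved by the half-braiding ``by uniqueness'', does not hold: $c_{A,X}$ is not an algebra automorphism of $F(A)$, so the characteristic nature of the nilradical says nothing about it.

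A concrete counterexample:
\begin{itemize}
\item Let $\mathrm{char}\,\K=p$, $G=\Z/p$ and $\A=\Rep(G)$. Then $\cZ(\A)$ consists of $G$-graded $G$-modules, with half-braiding $v\otimes x\mapsto gx\otimes v$ for $v$ of degree $g$.
\item Let $A=\K G$ with trivial $G$-action and $g$ placed in degree $g$ (this is the Lagrangian algebra $I(\be)$). It is a commutative algebra in $\cZ(\A)$.
\item Its ideals in $\cZ(\A)$ are graded, i.e. spans of subsets $S\subseteq G$ with $GS\subseteq S$. So $A$ is simple, hence an indecomposable commutative exact algebra.
\item But $F(A)$ is $\K G$ with trivial action, and its augmentation ideal is a nonzero nilpotent ideal in $\Rep(G)$. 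So $F(A)$ is not exact.
\item This ideal is not graded, i.e. not stable under the half-braiding.
\end{itemize}
Hence you cannot obtain $\Delta_0$ as a direct factor of an exact algebra $F(A)$.

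The repair is exactly what the paper does. Do not try to make $F(A)$ exact. Instead choose a maximal two-sided ideal $I\subset F(A)$ and put $\Delta=F(A)/I$; it is simple, hence an indecomposable exact algebra by Theorem \ref{CoSZ}. All you need afterwards is the surjective algebra map $\pi\colon F(A)\to\Delta$.

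In Step 2 you must then replace ``multiply $\Delta_0\otimes F(A)\to F(A)$ and project'', which used $\Delta_0\subset F(A)$. Use instead the composite of $\pi\otimes\mathrm{id}_\Delta\colon A\otimes\Delta\to\Delta\otimes\Delta$ with the multiplication of $\Delta$. The bimodule check uses naturality of the half-braiding with respect to $\pi$, together with commutativity of $A$. With this change the rest of your proposal goes through.
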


\begin{proof}
    The object $F(A)\in \A$ is an algebra, possibly non-exact. Choose a maximal two-sided
    ideal $I\subset F(A)$ and set $\Delta =F(A)/I$. The algebra $\Delta$ is an indecomposable
    exact algebra in $\A$ by Theorem \ref{CoSZ}. 

    The functor $F_\Delta :\cZ(\A)\to \phantom{}_\Delta \A_\Delta$ can be described as
    follows: for any $Z\in \cZ(\A)$ we have $F_\Delta(Z)=Z\otimes \Delta$ with an obvious structure of $\Delta-$bimodule, see \cite[Remark 7.16.3]{EGNO}. In particular $F_\Delta(A)=A\otimes \Delta$. Using multiplication in $A$ we get morphism $A\otimes \Delta \to \Delta$, equivalently $d: F_\Delta(A)\to \be$. 
    It is clear that this is a homomorphism of algebras in 
    the category $\phantom{}_\Delta \A_\Delta$; in particular $d\ne 0$.

    Let $\tilde d: A\to I_\Delta(\be)$ be the image of $d$ under
    the natural isomorphism $\Hom(F_\Delta(A),\be)\simeq \Hom(A,I_\Delta(\be))$. By definition the morphism $\tilde d$
    is the composition $A\to I_\Delta(F_\Delta(A))\to I_\Delta(\be)$
    where the first map is the adjunction and the second map is $I_\Delta(d)$. Recall that $I_\Delta$ is a lax monoidal functor,
    so it sends algebras to algebras (in particular, this is how the
    structure of algebra on $I_\Delta(\be)$ was defined), and homomorphisms of algebras
    to homomorphisms of algebras. Thus $I_\Delta(d)$ is a homomorphism 
    of algebras. Also the adjunction map $A\to I_\Delta(F_\Delta(A))$
    is a homomorphism of algebras by the properties of monoidal adjunctions, see \cite[Proposition 2.1]{cafe}. Hence $\tilde d: A\to I_\Delta(\be)$
    is a homomorphism of algebras; since $A$ is simple (see Proposition \ref{subex} (2)) this homomorphism is injective.
\end{proof}

\begin{corollary}\label{8.12.3}
    Let $\A$ and $\B$ be two finite tensor categories such that $\cZ(\A)$ and 
    $\cZ(\B)$ are equivalent as braided tensor categories. Then there exists
    an indecomposable commutative exact algebra $\Delta \in \A$ and an equivalence
    of tensor categories $\B \simeq \phantom{}_\Delta \A_\Delta$. In other words,
    $\A$ and $\B$ are Morita equivalent.
\end{corollary}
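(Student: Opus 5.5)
The plan is to realize $\B$ inside $\cZ(\B)\simeq\cZ(\A)$ through a lagrangian algebra and then use Theorem \ref{lagr} to match that algebra with one of the form $I_\Delta(\be)$. (The statement says "commutative exact algebra $\Delta\in\A$"; since $\A$ is not braided, I will read this as "indecomposable exact algebra", which is what Theorem \ref{lagr} produces.)

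Fix a braided equivalence $\Phi:\cZ(\B)\to\cZ(\A)$. Let $I_\B:\B\to\cZ(\B)$ be the right adjoint of the forgetful functor. The object $L:=I_\B(\be)$ is a lagrangian (indecomposable, commutative, exact) algebra in $\cZ(\B)$, and $\B$ is recovered from it: $\B^{op}$ is equivalent to the category of right $L$-modules $\cZ(\B)_L$ by (a) above. More precisely, the category of (local) $L$-modules is identified with $\B$ as a tensor category, via the functor $F_\B$. Transport $L$ along $\Phi$ to get $A:=\Phi(L)$, a lagrangian algebra in $\cZ(\A)$. Applying Theorem \ref{lagr}, we obtain an indecomposable exact algebra $\Delta\in\A$ and an injective algebra homomorphism $A\hookrightarrow I_\Delta(\be)$.

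The second step is to show that this embedding is an isomorphism. Both $A$ and $I_\Delta(\be)$ are lagrangian in the factorizable category $\cZ(\A)$, so $\FP(A)^2=\FP(\cZ(\A))=\FP(I_\Delta(\be))^2$. Frobenius–Perron dimension is additive on short exact sequences, so the cokernel of the monomorphism has $\FP$ equal to $0$, hence it is zero. Therefore $A\simeq I_\Delta(\be)$ as algebras.

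Finally, take module categories of the two isomorphic algebras. On one side, $\cZ(\A)_A\simeq\cZ(\B)_L$, which gives $\B$ (up to the opposite convention in (a)). On the other side, $\cZ(\A)_{I_\Delta(\be)}\simeq\cZ({}_\Delta\A_\Delta)_{I_\Delta(\be)}$, which gives ${}_\Delta\A_\Delta$ by the same statement (a) applied to $\cZ({}_\Delta\A_\Delta)$. Here I will use that (a) holds functorially: the tensor structure on $\C_A$ for a lagrangian $A$ is intrinsic to the pair $(\C,A)$. This yields a tensor equivalence $\B\simeq{}_\Delta\A_\Delta$ (possibly after applying ${}^{op}$ to both sides, which is harmless). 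The Morita statement then follows because ${}_\Delta\A_\Delta$ is by definition the dual of $\A$ with respect to the exact module category $\A_\Delta$.

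I expect the main obstacle to be this last identification. I must check that the tensor structure on $\cZ(\C)_{I(\be)}$ coming from Shimizu–Yadav recovers the original category compatibly, so that isomorphic lagrangian algebras give equivalent tensor categories and not merely equivalent module categories. If that is delicate, the fallback is to use the description $F_\Delta(Z)=Z\otimes\Delta$ and check directly that $\B\simeq\cZ(\B)_L$ corresponds under $\Phi$ to ${}_\Delta\A_\Delta$.
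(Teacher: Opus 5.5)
Your argument is essentially the paper's: transport $I_\B(\be)$ to $\cZ(\A)$, embed it into $I_\Delta(\be)$ via Theorem \ref{lagr}, and upgrade the embedding to an isomorphism by comparing the Frobenius--Perron dimensions of the two lagrangian algebras. You then recover $\B$ and ${}_\Delta\A_\Delta$ from these algebras; the paper does this by citing \cite[Lemma 8.12.2(ii)]{EGNO}, which is exactly the step you flag, and you are right that $\Delta$ should only be an indecomposable exact algebra, since $\A$ is not braided. One small slip: $\B$ is recovered from the category $\cZ(\B)_L$ of all $L$-modules, not from the local ones, since local modules over a lagrangian algebra form $\mathrm{Vec}$.
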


\begin{proof}
    We apply Theorem \ref{lagr} to algebra $A=I_\B(\be)\in \cZ(\B)\simeq \cZ(\A)$ 
    where $I_\B$ is the right adjoint of the forgetful functor $F_\B: \cZ(\B)\to \B$.
    Thus $I_\B(\be)$ is isomorphic to some subalgebra of $I_\Delta(\be)$. This subalgebra
    must be all of $I_\Delta(\be)$ since both $I_\B(\be)$ and $I_\Delta(\be)$ are
    lagrangian subalgebras, so $\FP(I_\B(\be))=\FP(I_\Delta(\be))$. 
    Thus we have an isomorphism of algebras $I_\B(\be)\simeq I_\Delta(\be)$. The result follows
    since the tensor categories $\B$ and $\phantom{}_\Delta \A_\Delta$ can be 
    reconstructed from the algebras $I_\B(\be)$ and $I_\Delta(\be)$, see \cite[Lemma 8.12.2(ii)]{EGNO}.
\end{proof}

\begin{remark}
    Corollary \ref{8.12.3} appears as \cite[Theorem 8.12.3]{EGNO}. It was pointed out
    by Harshit Yadav that the proof of this result in \cite{EGNO} is incorrect; thus
    Corollary \ref{8.12.3} fills this gap. Note that this new proof uses
    the results of \cite{CSZ} in a crucial way.
\end{remark}

Given an indecomposable commutative exact algebra $A$ in a braided finite tensor category
$\C$ one defines braided finite tensor category $\C_A^{loc}$ of local (or dyslectic) 
$A-$modules, see \cite[2.3]{ShYa} (note that $\C_A^{loc}$ is a full tensor subcategory
of the category $\C_A$). If $\C$ is non-degenerate, then so is $\C_A^{loc}$,
see \cite[Corollary 5.14(b)]{ShYa}.

\begin{corollary}\label{local}
    Let $\A$ be a finite tensor category and let $A\in \cZ(\A)$ be an indecomposable
    commutative exact algebra. Then there exists a finite tensor category $\B$ and
    a braided tensor equivalence $\cZ(\A)_A^{loc}\simeq \cZ(\B)$.
\end{corollary}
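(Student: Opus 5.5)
By Theorem \ref{lagr} there is an indecomposable exact algebra $\Delta\in\A$ such that $A$ is isomorphic to a subalgebra of the lagrangian algebra $L:=I_\Delta(\be)\in\cZ(\A)$. Replacing $\A$ by $\phantom{}_\Delta\A_\Delta$, which does not change $\cZ(\A)$ up to braided equivalence, we may assume $\Delta=\be$ and $A\subset L=I(\be)$. The plan is to show that $L$ is a commutative exact algebra in $\cZ(\A)_A^{loc}$, that it is lagrangian there, and then to apply the criterion (a) recalled above to $\cZ(\A)_A^{loc}$.

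\textbf{Step 1: $L$ is a local $A$-module and an algebra in $\cZ(\A)_A^{loc}$.} The inclusion $A\subset L$ is a homomorphism of commutative algebras. Hence $L$ is a commutative $A$-algebra, and $L$ viewed as an $A$-module is local, since the two actions $a\cdot x$ and $x\cdot a$ (via the braiding) agree by commutativity of $L$. Multiplication $L\otimes L\to L$ factors through $L\otimes_A L$ because $A$ sits inside $L$. So $L$ is a commutative algebra in $\cZ(\A)_A^{loc}$, with unit $A\to L$.

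\textbf{Step 2: $L$ is exact and indecomposable there.} We use Theorem \ref{CoSZ} on both sides. A nilpotent ideal of $L$ in $\cZ(\A)_A$ is in particular an $L$-submodule in $\cZ(\A)$ that is nilpotent, so it vanishes because $L$ is exact in $\cZ(\A)$. For indecomposability, $\Hom_{A}(A,L)=\Hom(\be,L)$ is one-dimensional. One technical point remains: exactness in the sense of the module category over $\cZ(\A)_A^{loc}$ needs a criterion internal to that category, and Theorem \ref{CoSZ} supplies exactly this, since its condition (2) is intrinsic.

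\textbf{Step 3: dimension count and conclusion.} Since $\cZ(\A)$ is factorizable, \cite[Lemma 5.17]{ShYa} gives $\FP(\cZ(\A)_A^{loc})=\FP(\cZ(\A))/\FP(A)^2$. By \cite[Corollary 5.14(b)]{ShYa}, $\cZ(\A)_A^{loc}$ is non-degenerate, hence factorizable. The dimension of $L$ as an object of $\cZ(\A)_A$ is $\FP(L)/\FP(A)$; this uses the standard normalization that the free-module functor $\cZ(\A)\to\cZ(\A)_A$ multiplies $\FP$ by $1$ and the forgetful functor divides by $\FP(A)$. Therefore
$$\FP_{A}(L)^2=\frac{\FP(L)^2}{\FP(A)^2}=\frac{\FP(\cZ(\A))}{\FP(A)^2}=\FP(\cZ(\A)_A^{loc}),$$
so $L$ is lagrangian in $\cZ(\A)_A^{loc}$. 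Statement (a) then yields $\cZ(\A)_A^{loc}\simeq\cZ(\B)$, with $\B$ the opposite of $(\cZ(\A)_A^{loc})_L$.

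The main obstacle is Step 3: we need to fix the correct normalization of Frobenius–Perron dimensions in $\cZ(\A)_A$ and $\cZ(\A)_A^{loc}$, and to confirm that the cited results of \cite{ShYa} apply in the non-semisimple setting. If the normalization is awkward, the first thing I would try is to identify $(\cZ(\A)_A^{loc})_L$ with $\cZ(\A)_L$ (modules over $L$ in $\cZ(\A)$, using $A\subset L$) and compare dimensions through that identification.
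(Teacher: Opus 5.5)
Your proposal is correct and follows essentially the same route as the paper: embed $A$ into the lagrangian algebra $I_\Delta(\be)$ via Theorem \ref{lagr}, regard $I_\Delta(\be)$ as a commutative algebra in $\cZ(\A)_A^{loc}$ that is exact by Theorem \ref{CoSZ} (its ideals there are ideals in $\cZ(\A)$), check that it is lagrangian using \cite[Lemma 5.17]{ShYa} together with $\FP_{\cZ(\A)_A}(L)=\FP(L)/\FP(A)$, and conclude by (a). Your reduction to $\Delta=\be$ and the fallback plan in Step 3 are harmless but not needed, since the normalization you use is already the one the paper uses.
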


\begin{proof}
    Let $A\subset I_\Delta(\be)$ be as in Theorem \ref{lagr}. The action of $A$ on
    $I_\Delta(\be)$ by multiplication makes $I_\Delta(\be)$ into $A-$module; moreover
    $I_\Delta(\be)\in \cZ(\A)_A^{loc}$ and the multiplication in $A$ makes it into
    a commutative algebra in $\cZ(\A)_A^{loc}$, see e.g. \cite[3.6(c)]{DMNO}. 
    This algebra is simple (any ideal of $I_\Delta(\be)\in \cZ(\A)_A^{loc}$ is an ideal
    of $I_\Delta(\be)\in \cZ(\A)$); hence it is exact by Theorem \ref{CoSZ}. 
    It is easy to see that $I_\Delta(\be)$ considered as algebra in $\cZ(\A)_A^{loc}$
    is lagrangian: $\FP(\cZ(\A)_A^{loc})=\frac{\FP(\cZ(\A))}{\FP(A)^2}=\left(\frac{\FP(I_\Delta(\be))}{\FP(A)}\right)^2$ by \cite[Lemma 5.17]{ShYa},
    and $\frac{\FP(I_\Delta(\be))}{\FP(A)}$ is the Frobenius-Perron dimension 
    of $I_\Delta(\be)$ considered as an object of $\cZ(\A)_A^{loc}$. Thus the result
    follows by (a) above.
\end{proof}

\subsection{Witt equivalence}
Let $\C$ and $\D$ be two non-degenerate braided finite tensor categories. One says
that $\C$ and $\D$ are Witt equivalent (denoted by $\C\sim_{Witt}\D$ if there exist
finite tensor categories $\A_1$ and $\A_2$ and an equivalence of braided tensor
categories $\C\boxtimes \cZ(\A_1)\simeq \D \boxtimes \cZ(\A_2)$, see e.g. 
\cite[Definition 7.2]{ShYa} or \cite[Definition 6.23]{LaWa}. 
It is easy to see that $\sim_{Witt}$ is an equivalence relation
and the set of equivalence classes form an abelian group $\W^{ns}$ (with respect to
the operation $\boxtimes$), see \cite[Lemma 7.3]{ShYa}. The identity element 
of the group $\W^{ns}$ is the equivalence class $[\mbox{Vec}]$ where $\mbox{Vec}$ is
the category of finite dimensional vector spaces over $\K$ and $[\C]$ denote the equivalence
class of the category $\C$.

\begin{proposition}\label{witt}
    (1) Assume that $\C\sim_{Witt}\mbox{Vec}$. Then $\C \simeq \cZ(\A)$ for some finite
    tensor category $\A$.

    (2) $\C \sim_{Witt}\D$ if and only if $\C \boxtimes \D^{rev}\simeq \cZ(\A)$
    for some finite tensor category $\A$.
\end{proposition}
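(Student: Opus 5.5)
For (1), I would start from the definition: $\C\sim_{Witt}\mbox{Vec}$ means there are finite tensor categories $\A_1,\A_2$ and a braided equivalence $\C\boxtimes \cZ(\A_1)\simeq \cZ(\A_2)$. The factor $\cZ(\A_1)$ contains the lagrangian algebra $L_1=I_{\A_1}(\be)$, so $\be\boxtimes L_1$ is an indecomposable commutative exact algebra in $\C\boxtimes\cZ(\A_1)$. Transport it to $\cZ(\A_2)$ via the equivalence. Corollary \ref{local} then gives a finite tensor category $\B$ with $\cZ(\A_2)^{loc}_{\be\boxtimes L_1}\simeq \cZ(\B)$.

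The next step is to identify the local modules on the other side: $(\C\boxtimes\cZ(\A_1))^{loc}_{\be\boxtimes L_1}\simeq \C\boxtimes \cZ(\A_1)^{loc}_{L_1}$. I would then show $\cZ(\A_1)^{loc}_{L_1}\simeq\mbox{Vec}$. Indeed, it is a non-degenerate braided finite tensor category whose Frobenius--Perron dimension, by \cite[Lemma 5.17]{ShYa}, is $\FP(\cZ(\A_1))/\FP(L_1)^2=1$. A finite tensor category of FP-dimension $1$ is $\mbox{Vec}$: every simple has dimension $\ge 1$, and $\FP$ of the category is at least $\FP(\be)\FP(P_\be)\ge 1$ with equality forcing $P_\be=\be$ and only one simple. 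Hence $\C\simeq\cZ(\B)$.

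The step I expect to be the main obstacle is the identification of local modules over $\be\boxtimes L_1$ in a Deligne product. For right modules one has $(\C\boxtimes\D)_{\be\boxtimes L}\simeq \C\boxtimes \D_L$. The locality condition involves the double braiding with $\be\boxtimes L$, which acts only on the second factor, so it should cut out exactly $\C\boxtimes\D^{loc}_L$. Making this precise for non-semisimple Deligne products needs care; I would check it on objects of the form $X\boxtimes M$ and use exactness, together with the fact that locality is a kernel condition and the relevant subcategory is closed under subquotients.

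For (2), I would use standard Witt group facts (\cite[Lemma 7.3]{ShYa}): the inverse of $[\D]$ is $[\D^{rev}]$, via $\D\boxtimes\D^{rev}\simeq\cZ(\D)$ for non-degenerate $\D$, which is the factorizability of non-degenerate categories. Then $\C\sim_{Witt}\D$ iff $\C\boxtimes\D^{rev}\sim_{Witt}\D\boxtimes\D^{rev}\sim_{Witt}\mbox{Vec}$. Applying (1) gives the forward direction. Conversely, if $\C\boxtimes\D^{rev}\simeq\cZ(\A)$, then
$$\C\boxtimes\cZ(\D)\simeq \C\boxtimes\D^{rev}\boxtimes\D\simeq \D\boxtimes\cZ(\A),$$
which directly exhibits the Witt equivalence.
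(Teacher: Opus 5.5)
Your proposal is correct and follows the paper's proof. For (1) the paper likewise takes a lagrangian algebra $A\in\cZ(\A_1)$, transports $\be\boxtimes A$ to $\cZ(\A_2)$, applies Corollary \ref{local}, and uses $(\C\boxtimes\cZ(\A_1))^{loc}_{\be\boxtimes A}\simeq\C$, which it asserts by pointing to the argument of \cite[Proposition 5.8]{DMNO}; your Deligne-product splitting, together with the FP-dimension argument giving $\cZ(\A_1)^{loc}_{A}\simeq\mbox{Vec}$, is a reasonable way to fill in that step, and for (2) the paper simply combines (1) with $\C\sim_{Witt}\D\iff\C\boxtimes\D^{rev}\sim_{Witt}\mbox{Vec}$, which is exactly what you spell out via $\D\boxtimes\D^{rev}\simeq\cZ(\D)$.
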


\begin{proof}
    (1) The assumption $\C\sim_{Witt}\mbox{Vec}$ says that $\C \boxtimes \cZ(\A_1)\simeq \cZ(\A_2)$ for some finite tensor categories $\A_1$ and $\A_2$. Let $A\in \cZ(\A_1)$
    be a lagrangian algebra. Then $\be \boxtimes A\in \C \boxtimes \cZ(\A_1)$ is an indecomposable commutative exact algebra. Then it is easy to see that
    $(\C \boxtimes \cZ(\A_1))_{\be \boxtimes A}^{loc}\simeq \C$ (e.g. one can use
    exactly the same argument as in \cite[Proposition 5.8]{DMNO}). Let us denote
    by $\tilde A$ the image of $A$ in $\cZ(\A_2)$ under the equivalence
    $\C \boxtimes \cZ(\A_1)\simeq \cZ(\A_2)$. Then by Corollary \ref{local} we have
    $\cZ(\A_2)_{\tilde A}^{loc}\simeq \cZ(\A)$ for some finite tensor category $\A$.
    Hence
    $$\C \simeq (\C \boxtimes \cZ(\A_1))_{\be \boxtimes A}^{loc}\simeq \cZ(\A_2)_{\tilde A}^{loc}\simeq \cZ(\A)$$
    as claimed.

    (2) is immediate from (1) since $\C \sim_{Witt}\D$ is equivalent to $\C \boxtimes \D^{rev}\sim_{Witt}\mbox{Vec}$.
\end{proof}

Proposition \ref{witt} gives positive answers to \cite[Question 7.18]{ShYa} and
\cite[Question 7.21]{ShYa}. As it is explained in \cite[7.4.5]{ShYa} one can use
arguments in the proof of \cite[Theorem 5.13]{DMNO} to prove 

\begin{proposition}
    Every Witt equivalence class contains a unique up to a braided equivalence 
    completely anisotropic representative. Moreover, the completely anisotropic
    representative of the class $[\C]$ is equivalent to $\C_A^{loc}$ where
    $A\in \C$ is an arbitrary maximal indecomposable commutative exact algebra. $\square$
\end{proposition}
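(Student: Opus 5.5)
Existence: I start from an arbitrary non-degenerate braided finite tensor category $\C$ and pick an indecomposable commutative exact algebra $A\in\C$ that is maximal, meaning it is not a proper subalgebra of a larger indecomposable commutative exact algebra. Such an $A$ exists because $\FP$ strictly increases along proper inclusions and is bounded above by $\sqrt{\FP(\C)}$; this bound comes from \cite[Lemma 5.17]{ShYa} together with $\FP(\C_A^{loc})\ge 1$. By \cite[Corollary 5.14(b)]{ShYa} the category $\C_A^{loc}$ is again non-degenerate. I then need three facts:
\begin{enumerate}
\item $\C_A^{loc}$ is completely anisotropic;
\item $\C_A^{loc}\sim_{Witt}\C$;
\item any two completely anisotropic Witt equivalent categories are braided equivalent.
\end{enumerate}
The last fact also gives independence of the choice of $A$.

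For fact (1), suppose $B$ is a nontrivial indecomposable commutative exact algebra in $\C_A^{loc}$. I regard $B$ as an algebra in $\C$ through the forgetful functor $\C_A\to\C$, as in \cite[3.6]{DMNO}. Since $B$ is local, it is commutative in $\C$, and it contains $A$ via its unit. For exactness I use Theorem \ref{CoSZ}. A nilpotent ideal $J$ of $B$ in $\C$ generates the $A$-submodule $AJ$, which is again a nilpotent ideal and is an ideal in $\C_A^{loc}$. Hence $AJ=0$ and so $J=0$. Indecomposability follows from $\Hom_\C(\be,B)=\Hom_A(A,B)=\K$. So $B\supsetneq A$, contradicting maximality.

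For fact (2), I use the standard construction: the functor $\C\boxtimes(\C_A^{loc})^{rev}\to\C_A$ realizes $\C_A$ as a module category, and its dual should give $\C\boxtimes(\C_A^{loc})^{rev}\simeq\cZ(\C_A)$. Rather than use that directly, I argue by analogy with Proposition \ref{witt}(2). I exhibit a lagrangian algebra $L$ in $\C\boxtimes(\C_A^{loc})^{rev}$, namely the image of the right adjoint of the tensor functor $\C\boxtimes(\C_A^{loc})^{rev}\to\C_A$ applied to $\be$. It is commutative exact and indecomposable by the same arguments as above, using Theorem \ref{CoSZ}. Its dimension satisfies $\FP(L)^2=\FP(\C)\FP(\C_A^{loc})$; this follows from $\FP(\C_A)=\FP(\C)/\FP(A)$ and \cite[Lemma 5.17]{ShYa}. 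Then statement (a) before Theorem \ref{lagr} gives a Drinfeld center, and Proposition \ref{witt}(2) yields the Witt equivalence. I expect this dimension count, and exactness of $L$, to be the main technical obstacle. Concretely, one must check that the relevant module category is exact, so that the right adjoint produces a simple algebra in the non-semisimple setting. This mirrors the proof of \cite[Theorem 5.13]{DMNO} and is where \cite[7.4.5]{ShYa} is invoked.

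For fact (3) and uniqueness, suppose $\C_0\sim_{Witt}\D_0$ with both completely anisotropic. By Proposition \ref{witt}(2) we have $\C_0\boxtimes\D_0^{rev}\simeq\cZ(\A)$, and this contains a lagrangian algebra $L$. Consider the subalgebras $L\cap(\C_0\boxtimes\be)$ and $L\cap(\be\boxtimes\D_0^{rev})$, taken as the largest subobjects lying in the respective subcategories. These are unital subalgebras, hence exact by Proposition \ref{subex}(1), and commutative and indecomposable. By anisotropy both are trivial. Following \cite{DMNO}, this forces $L$ to be the "graph" algebra of a braided equivalence $\C_0\simeq\D_0$. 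More precisely, the functor $\C_0\to\D_0$, $X\mapsto$ the $\D_0$-component of $\Hom$ into $L$, should be an equivalence. Proving that it is an equivalence rather than merely fully faithful uses the dimension equality $\FP(L)^2=\FP(\C_0)\FP(\D_0)$, and I expect this to be the second delicate point. Applying fact (3) to $\C_A^{loc}$ and $\C_{A'}^{loc}$ for two maximal choices $A,A'$ gives the final claim.
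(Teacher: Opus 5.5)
This is the paper's own route. The paper only says that, with Proposition \ref{witt} now available, the proof of \cite[Theorem 5.13]{DMNO} goes through as indicated in \cite[7.4.5]{ShYa}, and your steps are exactly that proof: a maximal $A$, complete anisotropy of $\C_A^{loc}$, $\C\sim_{Witt}\C_A^{loc}$, and uniqueness via a lagrangian algebra in $\C_0\boxtimes\D_0^{rev}$ with trivial intersections.

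The two points you flag are lighter than you expect:
\begin{enumerate}
\item Fact (2) is immediate from \cite[Corollary 5.14(a)]{ShYa}, the result behind statement (a), which already identifies $\C\boxtimes(\C_A^{loc})^{rev}$ with a Drinfeld center, so no hand-built lagrangian algebra is needed.
\item In fact (3), the non-semisimple form of the ``graph'' step is that trivial intersections make the free-module functors $\C_0\to(\C_0\boxtimes\D_0^{rev})_L\leftarrow\D_0^{rev}$ fully faithful, since every morphism $Z\boxtimes\be\to L$ factors through $L\cap(\C_0\boxtimes\be)=\be$. Then $\FP\bigl((\C_0\boxtimes\D_0^{rev})_L\bigr)^2=\FP(\C_0)\FP(\D_0)$ forces both functors to be equivalences, and comparing M\"uger centralizers of $\C_0\boxtimes\be$ makes the resulting equivalence braided.
\end{enumerate}
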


Note that if $\C$ is a semisimple category, the category $\C_A$ is also semisimple for
any exact algebra $A$. In particular $\C_A^{loc}$ is semisimple for an indecomposable commutative exact algebra $A$ in a semisimple braided tensor category $\C$. 
Hence we have a negative answer to \cite[Question 7.19]{ShYa}:

\begin{corollary}
    Assume $\C$ is a completely anisotropic non-degenerate braided finite tensor
    category. Then $\C \not \sim_{Witt}\D$ for any semisimple category $\D$.
\end{corollary}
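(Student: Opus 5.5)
The plan is to argue by contradiction, combining the preceding Proposition (uniqueness of the completely anisotropic representative in a Witt class) with the remark that semisimplicity is preserved by passing to categories of local modules. Suppose $\C$ is completely anisotropic and non-degenerate, and that $\C\sim_{Witt}\D$ with $\D$ semisimple; here $\D$ is a non-degenerate braided finite tensor category, so it is a non-degenerate braided fusion category.

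First, choose a maximal indecomposable commutative exact algebra $A\in\D$. This is possible because $\D$ is finite: any strictly increasing chain of indecomposable commutative exact subalgebras has strictly increasing $\FP$. Such chains are therefore bounded, since by \cite[Lemma 5.17]{ShYa} we have $\FP(A)^2\le \FP(\D)$. By the preceding Proposition, $\D_A^{loc}$ is the completely anisotropic representative of the class $[\D]$. Second, note that $\D_A$ is semisimple because $\D$ is semisimple and $A$ is exact. Indeed, every $M\in\D_A$ is a direct summand of $M\otimes A$ (using exactness via Theorem \ref{CoSZ}: $A$ is a product of simple algebras), or more directly, since every object of $\D$ is projective, exactness of $\D_A$ makes $\be\otimes M=M$ projective in $\D_A$. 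Hence the full subcategory $\D_A^{loc}$ is semisimple as well, being a full abelian subcategory closed under subquotients and direct sums.

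Third, since $\C\sim_{Witt}\D$ and $\C$ is itself completely anisotropic, the uniqueness part of the preceding Proposition gives a braided equivalence $\C\simeq\D_A^{loc}$. In particular $\C$ is semisimple. This is the desired conclusion: the statement is to be read as saying that a \emph{non-semisimple} completely anisotropic non-degenerate category cannot be Witt equivalent to a semisimple one, and this is how it applies to $\bar\C(G_2,G_2(a_1),7,q)$ via Theorem \ref{main 7mod}(4). I will make this hypothesis explicit, since without it the conclusion fails trivially by taking $\D=\C$.

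The only delicate point is the existence of a maximal indecomposable commutative exact algebra and the applicability of the uniqueness statement, both of which are imported from \cite{DMNO} and \cite[7.4.5]{ShYa} via the preceding Proposition. I will simply invoke that Proposition, together with the $\FP$ bound, to justify maximality.
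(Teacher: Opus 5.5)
Your proof is correct and follows the paper's own argument: $\D_A$, and hence $\D_A^{loc}$, is semisimple for any exact $A$ in semisimple $\D$, and then uniqueness of the completely anisotropic representative of a Witt class forces $\C\simeq\D_A^{loc}$. You are also right that the statement needs the hypothesis that $\C$ is non-semisimple; as written it fails for $\C=\D=\mbox{Vec}$, and the non-semisimple version is the one the paper applies to $\bar\C(G_2,G_2(a_1),7,q)$.
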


Finally assume $\K=\BC$. Let $\W$ denote the Witt group of non-degenerate
braided fusion categories defined in \cite[5.1]{DMNO}. We have an obvious
homomorphism $\W \to \W^{ns}$.

\begin{theorem}
    The homomorphism $\W \to \W^{ns}$ is injective but not surjective.
    In fact the class $[\C(G_2,G_2(a_1),7,q)]$ is not contained in its image.
\end{theorem}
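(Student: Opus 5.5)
Injectivity and non-surjectivity are handled separately, both via Proposition \ref{witt}.

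\emph{Injectivity.} Let $\C$ be a non-degenerate braided fusion category whose class in $\W^{ns}$ is trivial. By Proposition \ref{witt}(1), $\C\simeq\cZ(\A)$ for some finite tensor category $\A$. It remains to see that $\A$ is fusion, so that $[\C]$ is trivial already in $\W$ by \cite{DMNO}. Here we use that the forgetful functor $F:\cZ(\A)\to\A$ is surjective, since every object of $\A$ is a subquotient of $F(I(X))$. Hence $\A$ is semisimple whenever $\cZ(\A)$ is: an object $X$ is a direct summand of $F(I(X))$, which is a direct sum of images of simples, and images of simples are semisimple because $F$ is exact and faithful and $\A$ is a quotient. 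A cleaner route is that $\FP(\cZ(\A))=\FP(\A)^2$ and projectivity of $\be\in\cZ(\A)$ forces $\be\in\A$ to be projective via $I$ being exact and the adjunction $\Hom_\A(F(Z),\be)=\Hom(Z,I(\be))$. I would use the latter: $\Hom_\A(X,-)$ composed appropriately is exact, so $\A$ is semisimple, hence fusion. Since $\W\to\W^{ns}$ is a group homomorphism, triviality of the kernel gives injectivity.

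\emph{Non-surjectivity.} Suppose $[\C(G_2,G_2(a_1),7,q)]$ lies in the image, i.e.\ it is Witt equivalent to a semisimple $\D$. The category $\C$ itself is degenerate, with M\"uger center $\Rep(S_3)$ by Theorem \ref{main 7}(4), so the statement must be read for its modularisation $\bar\C=\bar\C(G_2,G_2(a_1),7,q)$. This is the non-degenerate representative, and I will treat the claim as one about $\bar\C$. By Theorem \ref{main 7mod}(4), $\bar\C$ is completely anisotropic. By the preceding Corollary, a completely anisotropic non-degenerate category is not Witt equivalent to any semisimple category, so $[\bar\C]\notin\operatorname{im}(\W\to\W^{ns})$. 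Unpacking that Corollary: the anisotropic representative of $[\D]$ is $\D_A^{loc}$, which is semisimple. Uniqueness of the anisotropic representative then gives $\bar\C\simeq\D_A^{loc}$, contradicting the non-semisimplicity of $\bar\C$ from Theorem \ref{main 7mod}(1).

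The main obstacle is the semisimplicity step in the injectivity argument, namely that $\cZ(\A)$ semisimple implies $\A$ semisimple. I would prove it by noting that $\be_\A$ is a direct summand of $F(I(\be))$. The object $I(\be)$ is projective in the semisimple category $\cZ(\A)$, and $F$ preserves projectives because it has an exact right adjoint $I$. So $\be_\A$ is projective, and $\A$ is semisimple by \cite[Corollary 4.2.13]{EGNO}-type arguments. A secondary point is that the paper's non-degenerate object is $\bar\C$, so the class should be taken for $\bar\C$ (equivalently, the statement about $\C$ refers to its modularisation).
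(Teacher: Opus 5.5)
Your proposal is correct and is essentially the argument the paper leaves implicit: injectivity comes from Proposition \ref{witt}(1) together with the fact that semisimplicity of $\cZ(\A)$ forces $\A$ to be fusion, and non-surjectivity comes from the preceding Corollary applied to the completely anisotropic, non-degenerate, non-semisimple category $\bar\C(G_2,G_2(a_1),7,q)$; you are right that the statement must be read for the modularisation, since $\C(G_2,G_2(a_1),7,q)$ itself is degenerate, and right to invoke non-semisimplicity explicitly, which that Corollary tacitly needs. Drop your first sketch for the semisimplicity of $\A$, since an exact faithful functor need not send simples to semisimple objects; the projectivity argument you settle on works, and more directly $\be_\A=F(\be_{\cZ(\A)})$ is projective because $F$ preserves projectives.
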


\section{Conjectures and questions}\label{genconj}
\subsection{Categories $\C(G_2,G_2(a_1),l,q)$}
We expect that Theorem \ref{main 7} extends to the case of arbitrary undivisible (i.e. not
divisible by 3) $l\ge 7$.

\begin{conjecture}\label{g2conj}
    Assume $l\ge 7$ and not divisible by 3.

    (1) We have
    $$\FP(\C(G_2, G_2(a_1),l,q))=6\frac{l^4}{(2\sin(\pi/l))^8(2\sin(2\pi/l))^2},$$
    and 
    $$\FP(T(\omega_1))=2[3]_l+1,\; \FP(T(\omega_2)=[5]_l+3[3]_l.$$

    (2) The category $\C(G_2, G_2(a_1),l,q)$ has stable Chevalley property.

    (3) The M\"uger center of the category $\C(G_2, G_2(a_1),l,q)$ is equivalent
    to $\Rep(S_3)$. The projective covers of the simple objects from $\Rep(S_3)$ are
    in the principal block; the highest weights of the corresponding tilting modules
    are in the alcoves $C_0, C_4, C_7$ (see Figure \ref{fig:subreg_cell_G2}).
\end{conjecture}

We checked that Conjecture \ref{g2conj} holds for $l=7,11$. Interestingly, it also holds
in some sense for $l=5$ if we declare that the category $\C(G_2, G_2(a_1),5,q)$
is a semisimplication of the category $\T(G_2,q)$. It was computed by Etingof
(see also \cite{RW})
that $\C(G_2, G_2(a_1),5,q)$ thus defined is $S_3-$equivariantization of a product
of 3 copies of the Fibonacci's category and this is compatible with formula
in Conjecture \ref{g2conj} (1). 

In the case when $l$ is divisible by 3 we still expect that the category
$\C(G_2, G_2(a_1),l,q)$ contains a subcategory $\Rep(S_3)$; however the M\"uger center
might be strictly contained in this subcategory. We still expect the stable Chevalley property and have a conjectural formula
for the Frobenius-Perron dimension:

\begin{conjecture}\label{g2conjm}
    Assume $l\ge 12$ and $l$ is divisible by 3. Then

    $$\FP(\C(G_2, G_2(a_1),l,q))=\frac29\frac{l^4}{(2\sin(\pi/l))^4(2\sin(2\pi/l))^2(2\sin(3\pi/l))^4}.$$
and for $l\ge 15$
$$\FP(T(\omega_1))=[3]_l+[5]_l-1,\; \FP(T(\omega_2))=2[7]_l-[5]_l+[3]_l+2.$$
\end{conjecture}

We checked that the formulas work for $l=12,15,18,21$; this applies also for $l=9$ if we
define category $\C(G_2, G_2(a_1),9,q)$ to be a semisimplification of $\T(\g,q)$
(for $q$ such that $q^2$ has order 9), see \cite{RW}. The formula for $\FP(T(\omega_2))$ gives
an incorrect result for $l=12$, namely the right hand side and the left hand side
differ exactly by 1. This is likely to be explained by the fact that $T(\omega_2)$
does not coincide with Weyl module in this (and only in this) case.

\subsection{Categories $\C(\g,e,l,q)$ for distinguished nilpotent $e\in \g$}\label{distconj}
Let $Q=Q(e)$ be the centralizer of a $sl_2-$triple associated with $e$ in the simply
connected group $G$ with $\mbox{Lie}(G)=\g$. In the case of distinguished nilpotent $e$,
$Q$ is a finite group. Let $2k_i+1, i\in M=M(e)$ be the sizes of Jordan cells for
the adjoint action of $e$ on $\g$ (it is well known that these sizes are odd for distinguished
nilpotent elements; the number of these cells, that is the cardinality of set $M$, is
the dimension of the centralizer of $e$ in $\g$). For any $k\in \Z_{>0}$ we set
$$S_k(l):=\frac{l}{(2^k\sin(\pi/l)\sin(2\pi/l)\cdots \sin(k\pi/l))^2}.$$

\begin{conjecture}\label{FPdim}
    Assume $m=1$ or $l$ is undivisible. Then
    $$\FP(\C(\g,e,l,q))=|Q|\prod_{i=1}^MS_{k_i}(l).$$
\end{conjecture}

In the case of Lie algebras $\g$ of exceptional type the numbers $k_i$ can be found
in \cite{St}; in the case of classical Lie algebras one computes them easily from
partitions associated to $e$, see \cite{CMg}. 

\begin{example}
    (0) Assume $\g$ is of type $G_2$ and $e$ is of type $G_2(a_1)$. Then $\g$ decomposes
    into Jordan cells of sizes $5,3,3,3$, see \cite[Table 16]{St}. Also $Q=S_3$, so Conjecture \ref{FPdim} predicts $\FP(\C(G_2, G_2(a_1),l,q))=6S_2(l)S_1(l)^3$ which coincides
    with formula in Conjecture \ref{g2conj} (1).

    (1) Assume $\g$ is of type $C_3$ and $e$ is the regular element. Then $\g=sp(6)$
    and the partition associated to $e$ is $[6]$. Thus the tautological representation $W$
    of $\g$ is the irreducible 6-dimensional representation of the correponding $sl_2-$triple.
    Hence $\g \simeq S^2W$ is a direct sum of 11, 7, and 3-dimensional $sl_2-$representations,
    so $k_i=5,3,1$. Also $Q=\Z/2$ in this case. Thus Conjecture \ref{FPdim} predicts 
    $$\FP(\C(C_3,l,q))=2S_{5}(l)S_3(l)S_1(l)=$$ $$2\frac{l^3}{(2\sin(\pi/l))^6(2\sin(2\pi/l))^4(2\sin(3\pi/l))^4(2\sin(4\pi/l))^2(2\sin(5\pi/l))^2}.$$
    We did not find a formula for $\FP(\C(\g,l,q))$ in the undivisible case (for $m>1$)
    in the literature, so the formula above is a conjecture.

    (2) Take again $\g$ of type $C_3$ but let $e$ be the subregular nilpotent element.
    The associated partition is $[4,2]$; so $W$ is a direct sum of 2-dimensional and 4-dimensional irreducible $sl_2-$representations. Hence $\g\simeq S^2W$ is a direct sum
    of 7, 5, and three 3-dimensional $sl_2-$representations. Hence $k_i=3,2,1,1,1$. Also
    $Q\simeq \Z/2\times \Z/2$. Thus Conjecture \ref{FPdim} predicts
    $$\FP(\C(C_3,e,l,q))=4S_{3}(l)S_2(l)S_1(l)^3=4\frac{l^5}{(2\sin(\pi/l))^{10}(2\sin(2\pi/l))^4(2\sin(3\pi/l))^2}.$$
    
\end{example}

Here is a heuristic explanation of Conjecture \ref{FPdim}. Assume that $l$ is a prime.
We expect that the combinatorics (and, in particular the Frobenius-Perron dimension) of the category $\C(\g,e,l,q)$ coincides with the combinatorics of a similar category $\C(\g,e,l,q)_l$
defined over a field of characteristic $l$ (in this case $q=1$ and the category $\T(\g,q)$
is the category of tilting modules over the algebraic group $G$). The category
$\C(\g,e,l,q)_l$ is a symmetric tensor category; we expect that it admits 
an exact tensor functor $\C(\g,e,l,q)_l\to \mbox{Ver}_l$ where $\mbox{Ver}_l$
is the Verlinde category (see e.g. \cite{BEO}). Moreover, on the subcategory of
tilting modules in $\C(\g,e,l,q)_l$ this functor is isomorphic to the following
obvious functor: restrict tilting module to the group scheme $(\G_a)_1$ (the Frobenius 
kernel of the additive group $\G_a$) spanned by $e$ and semisimplify. Now the
Frobenius-Perron dimensions of objects can be computed in the category $\mbox{Ver}_l$;
thus we can compute Frobenius-Perron dimensions of tilting modules just by
looking at the action of $e$ on such modules; e.g. this gives formulas for the Frobenius-Perron dimensions of the fundamental representations as in Conjecture \ref{g2conj} (1). On the other hand the Frobenius-Perron dimension of the
category $\C(\g,e,l,q)_l$ is equal to the Frobenius-Perron dimension of its
fundamental group; we expect the image of this group in $\mbox{Ver}_l$ to have $Q$
as its group of components and the image of $\g$ as the Lie algebra of its
infinitesimal part; in particular, its dimension is a product of $|Q|$ and the
dimension of the universal enveloping algebra of $\g \in \mbox{Ver}_l$. By
PBW theorem (see e.g. \cite{Epbw}) the universal enveloping algebra has the same
dimension as the symmetric algebra $S(\g)$. Finally, an elementary calculation 
shows that $\FP(S(L_{2k+1}))=S_k(l)$ and we arrive at the formula in Conjecture \ref{FPdim}.

We expect that Conjecture \ref{FPdim} can be extended to the divisible case
similarly to Conjecture \ref{g2conjm} for type $G_2$. However, the precise
statement in this case remains to be found.

It is a classical result of P.~Slodowy that the singularity of the nilpotent
cone of $\g$ at the point $e_{sr}$ is $\BC^2/\Gamma$ for a suitable subgroup
$\Gamma \subset SL_2(\BC)$, see \cite{Sl}. Comparing this with Theorem \ref{main blocks}
we expect that the cohomology of the category $\C(\g,e,l,q)$ is related
with the singularity of the nilpotent cone at point $e$. Thus let $\Sl_e$
be the Slodowy slice at the point $e$, see e.g. \cite{GG} (thus $\Sl_e$ is the intersection
of the affine space appearing in \cite[1.1]{GG} and the nilpotent cone). The variety $\Sl_e$
is equipped with an action of $Q\times \BC^\times$. Thus, the algebra of functions
$\mO(\Sl_e)$ is graded and is equipped with a grading-preserving action of $Q$;
let $\mO(\Sl_e)^Q\subset \mO(\Sl_e)$ be the algebra of invariants.

\begin{conjecture}\label{Slod}
    We have an isomorphism of graded algebras
$$\mbox{Ext}_{\C(\g,e,l,q)}(\be,\be)=\mO(\Sl_e)^Q.$$
\end{conjecture}

\begin{example}
    Assume $\g$ is of type $G_2$ and $e=e_{sr}$ is the subregular nilpotent element.
    Theorem \ref{main blocks} states that $\mbox{Ext}_{\C(\g,e,l,q)}(\be,\be)=S^\bullet(V)^\Gamma$ where $V$ is a two dimensional space and $\Gamma \subset SL(V)$ is subgroup with the McKay graph of type $\tilde E_7$ (i.e. $\Gamma$ is the binary octahedral group). It is well known that there is a surjective homomorphism
    $\Gamma \to S_3$ with the kernel $\Gamma_1$ isomorphic to the group of quaternions
    (so the McKay quiver of $\Gamma_1$ is of type $\tilde D_4$). Recall that in this case
    $Q=S_3$ and $\Sl_e\simeq \BC^2/\Gamma_1$, see \cite{Sl}. Thus, in this case,
    $$\mbox{Ext}_{\C(\g,e,l,q)}(\be,\be)=S^\bullet(V)^\Gamma=(S^\bullet(V)^{\Gamma_1})^{S_3}=\mO(\Sl_e)^Q.$$
    This confirms Conjecture \ref{Slod} in this case (modulo an identification of the
    action of $Q=S_3$ on $\Sl_e$ with the action of $S_3$ on $\BC^2/\Gamma_1=\Sl_e$).
\end{example}

It would be interesting to find a description of the principal block of
the category $\C(\g,e,l,q)$. For starters one can ask

\begin{question}
    What is the Cartan matrix of the principal block of the category $\C(\g,e,l,q)$?
\end{question}

Also it seems interesting to find representation theoretic realization
of the category $\C(\g,e,l,q)$.

\begin{question}
    Is it possible to realize $\C(\g,e,l,q)$ as representation category of some
    vertex algebra?
\end{question}

Here is another question:

\begin{question}
    What can be said about semisimplification of the category $\C(\g,e,l,q)$?
\end{question}

For general $e$ we expect the category $\C(\g,e,l,q)$ to be of wild representation
type, so its semisimplification is unlikely to be computed completely. However in the subregular case we expect that that the semisimisimplification 
of $\C(\g,e_{sr},l,q)$ is $\C(\g,l,q)\boxtimes \Rep(\G_m\times \Gamma,\varepsilon)$
where $\varepsilon =(-1,\epsilon)$ (an element $\epsilon \in \Gamma$ was defined
in the beginning of Section \ref{toy}). Here $\C(\g,l,q)$ comes from the semisimplification of the tilting modules inside of $\C(\g,e_{sr},l,q)$,
$\Rep(\G_m)=\mbox{Vec}_\Z$ comes from the Heller shifts of the unit object,
and $\Rep(\Gamma)$ factor comes from the simple objects in the principal block.

\subsection{Categories $\C(\g,e,l,q)$ for general nilpotent $e$}
We expect that the categories $\C(\g,e,l,q)$ are defined for arbitrary nilpotent $e$,
that is the quotient $\T(\g,q)/\cI_e$ admits an abelian monoidal envelope.
However, the category $\C(\g,e,l,q)$ is not finite when $e$ is not distinguished.
Nevertheless, the category $\C(\g,e,l,q)$ is an equivariantization of some
finite tensor category $\tilde \C(\g,e,l,q)$ by a group closely related to $Q$
(e.g. the quotient of $Q$ by a normal subgroup of order 2).

The only example we know is the case $e=0$. Then $\cI_e=0$ and the minimal thick ideal of the category $\T(\g,q)$ is the category of projective
$U_q-$modules. Thus, the category $\C(\g,0,l,q)$ is the category
of finite dimensional $U_q-$modules; by the existence of the quantum
Frobenius map (see \cite{Lb}) this category is $G-$eqivariantization
of the category of representations of the small quantum group.

\bibliographystyle{amsalpha}

\begin{thebibliography}{A}

\bibitem[A]{An} H.~H.~Andersen, Tensor products of quantized tilting modules, Comm. Math. Phys. 149 (1992), no. 1, 149--159.

\bibitem[AP]{AP} H.~H.~Andersen, J.~Paradowski, Fusion categories arising from semisimple Lie algebras, 
Comm. Math. Phys. 169 (1995), no. 3, 563--588.

\bibitem[BK]{BK} B.~Bakalov, A.~Kirillov Jr., \textit{Lectures on tensor categories and modular functors}, University Lecture Series, 21. American Mathematical Society, Providence, RI, 2001.

\bibitem[BEO]{BEO} D.~Benson, P.~Etingof, V.~Ostrik, New incompressible symmetric tensor categories in positive characteristic. Duke Math. J. 172 (2023), no. 1, 105--200. 

\bibitem[B]{Brug} A.~Brugui\`eres, Cat\'egories pr\'emodulaires, modularisations et invariants des vari\'et\'es de dimension 3, Math. Ann. 316 (2000), no. 2, 215--236.

\bibitem[CM]{CMg} D.~Collingwood, W.~McGovern, \textit{Nilpotent orbits in semisimple Lie algebras}, Van Nostrand Reinhold Mathematics Series. Van Nostrand Reinhold Co., New York, 1993.

\bibitem[C]{Co} K.~Coulembier, Tensor ideals, Deligne categories and invariant theory. Selecta Math. (N.S.) 24 (2018), no. 5, 4659--4710.

\bibitem[CEO]{CEO} K.~Coulembier, P.~Etingof, V.~Ostrik, Tensor ideals of abelian type and quantum groups,
arXiv: 2511.08859.

\bibitem[CSZ]{CSZ} K.~Coulembier, M.~Stroi\'nski, T.~Zoran, Simple algebras and exact module categories,
arXiv: 2501.06629.

\bibitem[D]{D} E.~C.~Dade, Group-graded rings and modules,
Math. Z. 174 (1980), no. 3, 241--262.

\bibitem[DMNO]{DMNO} A.~Davydov, M.~M\"uger, D.~Nikshych, V.~Ostrik, The Witt group of non-degenerate braided fusion categories, J. f\"ur die reine
und angewandte Mathematik, {\bf 677} (2013), p. 135-177.

\bibitem[DNO]{DNO} A.~Davydov, D.~Nikshych, V.~Ostrik, On the structure of the Witt group of braided fusion categories. Selecta Math. (N.S.) 19 (2013), no. 1, 237--269. 

\bibitem[DGNO]{DGNO} V. ~Drinfeld, S. ~Gelaki, D. ~Nikshych, and V. ~Ostrik,
 On Braided Fusion Categories I. Selecta Math. 16 (2010), no. 1, 1--119.

\bibitem[EGNO]{EGNO} 
P.~Etingof, S.~Gelaki, D.~Nikshych, and V.~Ostrik,
\textit{Tensor categories}, Mathematical Surveys and Monographs, 205. American Mathematical Society, Providence, RI, 2015.

\bibitem[E1]{Ev} P.~Etingof, On Vafa's theorem for tensor categories,
Math. Res. Lett. 9 (2002), no. 5-6, 651--657.

\bibitem[E2]{Epbw} P.~Etingof, Koszul duality and the PBW theorem in symmetric tensor categories in positive characteristic,
Adv. Math. 327 (2018), 128--160.

\bibitem[EO1]{EOfinite} P.~Etingof, V.~Ostrik, Finite tensor categories, Moscow Math. J. 
{\bf 4} (2004), no. 3, p. 627-654.

\bibitem[EO2]{EOsemi} P.~Etingof, V.~Ostrik, On semisimplification of tensor categories. Representation theory and algebraic geometry--a conference celebrating the birthdays of Sasha Beilinson and Victor Ginzburg, 3--35, Trends Math., Birkh\"auser/Springer, Cham, 2022.

\bibitem[GG]{GG} W.~L.~Gan, V.~Ginzburg, Quantization of Slodowy slices, Internat. Math. Res. Notices 5 (2002) 243--255.

\bibitem[KT]{KaTa} M.~Kashiwara, T.~Tanisaki, Kazhdan-Lusztig conjecture for symmetrizable Kac-Moody Lie algebras. III. Positive rational case, Asian J. Math. 2 (1998), no. 4, 779--832.

\bibitem[LW]{LaWa} R.~Laugwitz, C.~Walton, Constructing non-semisimple modular categories with local modules. Comm. Math. Phys. 403 (2023), no. 3, 1363--1409.

\bibitem[L1]{Luex} G.~Lusztig, Some examples of square integrable representations of semisimple p-adic groups, Trans. Amer. Math. Soc. 277 (1983), 623--653.

\bibitem[L]{Lucells4} G.~Lusztig, Cells in affine Weyl groups. IV. J. Fac. Sci. Univ. Tokyo Sect. IA Math. 36 (1989), no. 2, 297--328.

\bibitem[L2]{Lb} G.~Lusztig, \textit{Introduction to quantum groups}, Progress in Mathematics, 110. Birkh\"auser Boston, Inc., Boston, MA, 1993.

\bibitem[LX]{LX} G.~Lusztig, N.~Xi, Canonical left cells in affine Weyl groups. Adv. in Math. 72 (1988), no. 2, 284--288.

\bibitem[NO]{NO} C.~N\u{a}st\u{a}sescu, F.~van Oystaeyen, \textit{Graded ring theory}, North-Holland Mathematical Library, 28. North-Holland Publishing Co., Amsterdam-New York, 1982.

\bibitem[NC]{cafe} nLab authors,
  monoidal adjunction,
  
 {\url{https://ncatlab.org/nlab/show/monoidal+adjunction}},
 {\href{https://ncatlab.org/nlab/revision/monoidal+adjunction/12}{Revision 12}}, January, 2026.


\bibitem[O]{Ost1} V.~Ostrik, Tensor ideals in the category of tilting modules,
Transformation Groups, {\bf 2} (1997), no. 3, p. 279-287.

\bibitem[OU]{OU} V.~Ostrik, A.~Utiralova, Python code used in this paper,
{\url{https://github.com/autiralova/A-non-semisimple-Witt-class-code}}.

\bibitem[R]{Rasm} T.~E.~Rasmussen, Multiplicities of second cell tilting modules. J. Algebra 288 (2005), no. 1, 1--19.

\bibitem[Ri]{Ri} C.~M.~Ringel, The indecomposable representations of the dihedral 2-groups. Math. Ann. 214 (1975), 19--34.

\bibitem[RW]{RW} E.~Rowell, H.~Wenzl, Fusion categories of type $G_2$ at levels $-8/3,-7/3$ and $-1$, preprint.

\bibitem[S]{Saw} S.~Sawin, Quantum groups at roots of unity and modularity, J. Knot Theory Ramifications 15 (2006), no. 10, 1245--1277.

\bibitem[Sh]{Shim} K.~Shimizu, Non-degeneracy conditions for braided finite tensor categories, Adv. Math. 355 (2019), 106778, 36 pp.

\bibitem[SY]{ShYa} K.~Shimizu, H.~Yadav, Commutative exact algebras and modular tensor categories, arXiv:2408.06314. 

\bibitem[Sc]{Scho} A.~Schopieray, Lie theory for fusion categories: a research primer, Topological phases of matter and quantum computation, 1--26, Contemp. Math., 747, Amer. Math. Soc., Providence, RI, 2020.

\bibitem[Sl]{Sl} P.~Slodowy, {\em Simple Singularities and Simple Algebraic Groups}, Lecture Notes in Mathematics, vol. 815, Springer, Berlin, 1980.

\bibitem[S1]{So1} W.~Soergel, Kazhdan-Lusztig-Polynome und eine Kombinatorik f\"ur Kipp-Moduln, 
Represent. Theory 1 (1997), 37--68.

\bibitem[S2]{So2} W.~Soergel, Charakterformeln f\"ur Kipp-Moduln \"uber Kac-Moody-Algebren, 
Represent. Theory 1 (1997), 115--132.

\bibitem[St]{St} D.~I.~Stewart, On the minimal modules for exceptional Lie algebras: Jordan blocks and stabilizers, LMS J. Comput. Math. 19 (2016), no. 1, 235--258.

\end{thebibliography}

\end{document}